\documentclass[12pt]{article}
\usepackage{amsfonts}
\usepackage{amssymb}
\usepackage{mathrsfs}
\usepackage{srcltx}
\textwidth 155mm \textheight 225mm \oddsidemargin 15pt
\evensidemargin 0pt \topmargin 0cm \headsep 0.3cm

\usepackage{amsmath}
\usepackage{amsthm}
\usepackage{amstext}
\usepackage{amsopn}
\usepackage{graphicx}
\usepackage{bm}
\newtheorem{theorem}{Theorem}[section]
\newtheorem{lemma}[theorem]{Lemma}

\theoremstyle{definition}

\theoremstyle{remark}
\newtheorem{remark}[theorem]{Remark}

\numberwithin{equation}{section}

\newcommand{\ba}{\begin{array}}
\newcommand{\ea}{\end{array}}
\newcommand{\f}{\frac}

\newcommand{\la}{\lambda}

\newcommand{\ds}{\displaystyle}

\begin{document}
\date{}
\title{ \bf\large{Hopf bifurcation of a delayed single population model with patch structure}\thanks{This research is supported by the National Natural Science Foundation of China (No 11771109).}}
\author{Shanshan Chen\textsuperscript{1}\footnote{Corresponding Author, Email: chenss@hit.edu.cn},\ \ Zuolin Shen\textsuperscript{1}\footnote{Email: mathust\_lin@foxmail.com},\ \ Junjie Wei\textsuperscript{1,2}\footnote{Email: weijj@hit.edu.cn}\ \
 \\
{\small \textsuperscript{1} Department of Mathematics, Harbin Institute of Technology,\hfill{\ }}\\
\ \ {\small Weihai, Shandong, 264209, P.R.China.\hfill{\ }}\\
{\small \textsuperscript{2} School of Mathematics and Data, Foshan University,\hfill{\ }}\\
\ \ {\small Foshan, Guangdong, 528000, P.R.China.\hfill{\ }}\\
}
\maketitle

\begin{abstract}
In this paper, we show the existence of Hopf bifurcation of a delayed single population model with patch structure.
The effect of the dispersal rate on the Hopf bifurcation is considered. % and the associated asymptotic profiles of the Hopf bifurcation value are given.
Especially, if each patch is favorable for the species, we show that when the dispersal rate tends to zero, the limit of the Hopf bifurcation value is the minimum of the \lq\lq local\rq\rq~Hopf bifurcation values over all patches. On the other hand, when the dispersal rate tends to infinity, the Hopf bifurcation value tends to that of the \lq\lq average\rq\rq~model.

\noindent {\bf{Keywords}}: Hopf bifurcation; patch structure; delay; dispersal.\\
\noindent {\bf {MSC 2010}}: 92D30, 34K18, 34K13, 37N25
\end{abstract}

\section{Introduction}
To understand the variation of the population densities in ecology,
various models described by delayed differential equations or delayed partial differential equations have been built and analyzed.
For example, the following classical Hutchinson's equation was used to model the growth of a single species:
\begin{equation}\label{class}
\dot u=u(m-u(t-r)),\;\;t>0,
\end{equation}
where $u$ represents the population density in time $t$, $m$ is the intrinsic growth rate, and $r$ represents the maturation time.
It is well known that large time delay $r$ could induce oscillation through Hopf bifurcation, see \cite{KuangY}.

The effect of spatial
environment was not considered in model \eqref{class}. If space is regarded as a continuous variable, one can obtain the following
diffusive Hutchinson's equation:
\begin{equation}\label{dclass}
\displaystyle\frac{\partial u}{\partial t} =d\Delta u+u(m(x)-u(t-r)),\;\;t>0.
\end{equation}
Here the dispersal is completely random, and there exist extensive results on the stability and Hopf bifurcation of model \eqref{dclass} if $m(x)\equiv m$ for $m>0$.
For the homogeneous Neumann boundary condition, Memory \cite{Memory} and Yoshida \cite{Yoshida} obtained the stability of the spatially homogeneous steady state and the existence of Hopf bifurcation. For the homogeneous Dirichlet boundary condition, the positive steady state of model \eqref{dclass} is spatially heterogeneous. Busenberg and Huang \cite{Busenberg} firstly studied the Hopf bifurcation near such spatially heterogeneous positive steady state, and the implicit function theorem and some perturbation arguments were used to show the existence of Hopf bifurcation. This method could also be used to study the Hopf bifurcation for some other population models under the homogeneous Dirichlet boundary conditions, see \cite{ChenShi2012,ChenYu2016,Guo2015,Guo2017,GuoYan2016,HuYuan2011,SuWeiShi2009,SuWeiShi2012,YanLi2010,YanLi2012} and the references therein. If $m(x)$ is spatially heterogenous, the effect of spatial
heterogeneity on model \eqref{dclass} was investigated in \cite{LouY2006} for $\tau=0$, and delay induced Hopf bifurcation was considered
in \cite{ShiShi2019}. Moreover,
the advection term, which indicates movements towards better quality habitat or refers to unidirectional bias of movements, was also taken into consideration for model \eqref{dclass} by many researchers. The dynamics of model \eqref{dclass} with the advection term could be found in \cite{Belgacem1995,CantrellCosner2003,CosnerLou2003,Lam2011,LouLutscher2014,LouZhou2015,Speirs2001,Vasilyeva2019,Vasilyeva2010,ZhouZhao2018} and the references therein for $\tau=0$, and delay induced Hopf bifurcation could be found in \cite{ChenLouWei,ChenWeiZhang}.

In a discrete spatial setting, one could obtain the following patch model:
\begin{equation}\label{patc}
\begin{cases}
\displaystyle\frac{d u_j}{dt}=d\sum_{k=1}^nd_{jk} u_k+u_j(m_j-u_j(t-r)),&t>0,\;\;j=1,\dots,n,\\
\bm u(t)=\bm \psi(t)\ge\bm 0,&t\in [-r,0],
\end{cases}
\end{equation}
where $n\ge 2$, $\bm u=(u_1,\dots, u_n)^T$ and $d_{jj}= -\ds\sum_{k\ne j}d_{kj}$ for any $j=1,\dots,n$.
Here $u_j$ denotes the population density in patch $j$ and time $t$;
$d>0$ represents the dispersal rate of the population; $m_j$ is the intrinsic growth rate in patch $j$, $m_j>0$ if the patch $j$ is favorable for the species, and $m_j\le0$ if the patch $j$ is unfavorable for the species;
$r\ge0$ represents the maturation time of the population; $d_{jk}(j\ne k)\ge0$ denotes the degree of the movements from patch
$k$ to patch $j$, and $d_{jj}$ denotes the outgoing degree of the movements of patch $j$.
In this paper, we will consider the existence of Hopf bifurcation of model \eqref{patc}. The following two assumptions are imposed throughout the paper:
\begin{enumerate}
\item [($A_1$)] The connectivity matrix $D:=(d_{jk})_{n\times n}$ is symmetric, irreducible and quasi-positive;
\item [($A_2$)] $H^+=\{j\in 1,\dots,n: m_j>0\}\ne \emptyset$.
\end{enumerate}
Here the symmetry assumption of $D$ could mimic the random
diffusion of species, which means that the
per capita rate of individuals entering patch $j$ from patch
$k$ is equal to that entering patch $k$ from patch $j$. Then
$d_{jj}= -\ds\sum_{k\ne j}d_{kj}=-\ds\sum_{k\ne j}d_{jk}$ for any $j=1,\dots,n$. Let $\tilde t=d t$, denoting $\la=1/d$, and $\tau=d r$, and dropping the tilde sign, model \eqref{patc} can be transformed as the following equivalent model:
\begin{equation}\label{ndif}
\begin{cases}
\displaystyle\frac{d u_j}{dt}=\ds\sum_{k=1}^n d_{jk} u_k+\la u_j(m_j-u_j(t-\tau)),&t>0,\;\;j=1,\dots,n,\\
\bm u(t)=\bm \psi(t)\ge \bm 0,&t\in [-\tau,0].
\end{cases}
\end{equation}

When $\tau=0$, the effect of the dispersal on total biomass for \eqref{patc} was studied extensively, see \cite{Arditi2015,Arditi2018,ZhangB2016,ZhangB2015}.
For $\tau>0$, delay induced Hopf bifurcation of model \eqref{patc} (or equivalently, \eqref{ndif})
was investigated for $n=2$. For this case, the characteristic equation has two transcendental terms, and the conditions that
yield purely imaginary roots to the characteristic equation could be obtained thoroughly, see \cite{LiaoLou2014}. However, this method cannot be
applied to the case of $n>3$, where more transcendental terms exist. In this paper, we will consider a general case of $n\ge2$ for model \eqref{patc} (or equivalently, \eqref{ndif}), and the method is also motivated by \cite{Busenberg}. We would like to point out that there are also many results
on stability and bifurcations for other patch models, see \cite{Allenpatch,Bichara2018,Gao2019,KangY2017,LiHC2019,TianC2019,WangXY2016,ZhuHP2018} and references therein.

Throughout the paper, we use the following notations. For $n\ge2$,
\begin{equation}
\begin{split}
&\mathbb R^n=\{\bm u=(u_1,\dots, u_n)^T:u_k\in\mathbb R \;\;\text{for any} \;\;k=1,\dots,n\},\\
&\mathbb R_+^n=\{\bm u=(u_1,\dots, u_n)^T:u_k\ge0 \;\;\text{for any} \;\;k=1,\dots,n\},\\
&\mathbb C^n=\{\bm u=(u_1,\dots, u_n)^T:u_k\in\mathbb C \;\;\text{for any} \;\;k=1,\dots,n\}.\\
\end{split}
\end{equation}
%Let $z^\la$ denote the function $z(\la):\mathbb R\mapsto\mathbb R$, and $\bm z_\la$ denote the function $\bm z(\la):\mathbb R\mapsto\mathbb R^n$.
Let $A$ be an $n\times n$ real-valued matrix. We denote the spectral bound of $A$ by
$$s(A):=\max\{\mathcal Re \mu:\mu \;\;\text{is an eigenvalue of}\;\;A\}. $$
%The matrix $A$ is called nonnegative if all the entries of $A$ are nonnegative.
%The matrix $A$ is called quasi-positive (or cooperative) if all off-diagonal entries of $A$ are nonnegative.
Let $\bm u=(u_1,\dots, u_n)^T$ and $\bm v=(v_1,\dots, v_n)^T$ be two vectors.
We write $\bm u\ge \bm v$ if $u_i\ge v_i$ for any $i=1,\dots,n$;
 $\bm u>\bm v$ if $u_i\ge v_i$ for any $i=1,\dots,n$, and there exists
$i_0$ such that $u_{i_0}>v_{i_0}$;
and $\bm u\gg \bm v$ if $u_i>v_i$ for any $i=1,\dots,n$.
%Moreover, we say $\bm u$ is strongly positive if $\bm u\gg \bm 0$.
Let the complexification of a linear space $Z$ be $Z_\mathbb{C}:= Z\oplus
iZ=\{x_1+ix_2|~x_1,x_2\in Z\}$, and denote the domain
of a linear operator $T$ by $\mathscr{D}(T)$, the kernel of $T$ by $\mathscr{N}(T)$, and the range of $T$ by $\mathscr{R}(T)$.
Moreover, the inner product for $\mathbb C^n$ we choose is $\langle\bm u,\bm v \rangle=\sum_{j=1}^n \overline u_jv_j$, and for any $\bm u\in\mathbb C^n$,
\begin{equation}\label{norm}
\|\bm u\|_\infty=\max_{j=1,\dots,n}|u_j|,\;\;\|\bm u\|_2=\left(\sum_{j=1}^n|u_j|^2\right)^{1/2}.
\end{equation}

The rest of the paper is organized as follows. In Section 2, we consider the existence of Hopf bifurcation when the dispersal rate $d$ is large or near some critical value, and show the asymptotic profile of the Hopf bifurcation value when $d$ tends to infinity or some critical value. In Section
3, we consider the case that $d$ is small, and show the asymptotic profile of the Hopf bifurcation value when $d$ tends to zero.
In Section 4,
some numerical simulations are given to illustrate our theoretical results.

\section{Large (not small) dispersal rate}
In this section, we will consider the existence of Hopf bifurcation of model \eqref{patc} when the dispersal rate $d$ is large or near some critical value. The case of small dispersal rate
will be considered in the next section. Actually, we firstly considered the equivalent system \eqref{ndif}, which is relatively simpler to analyze than the original model \eqref{patc} for this case. Then the results of model \eqref{patc} could be obtained directly from that of model \eqref{ndif}.
\subsection{Some preliminaries}
In this subsection, we mainly give some preliminaries on the properties of the spectrum bound $s(\la Q+D)$, where $Q=\text{diag}(m_i)$, and the asymptotic profile of the steady states of model \eqref{ndif}.
It follows directly from the Perron-Frobenius theorem that:
 $s(D)=0$ is a simple
eigenvalue of $D$ with an eigenvector $\bm \eta\gg\bm 0$, where
\begin{equation}\label{alp1}
\bm {\eta}=(\eta_1,\dots,\eta_n)^T, \;\;\text{and}\;\;\eta_j=\displaystyle\frac{1}{n}\;\;\text{for any}\;\; j=1,\dots,n.
\end{equation}
 Moreover, there exists no other eigenvalue with a nonnegative eigenvector.
Then we have the following result.

\begin{lemma} \label{lm2.1}
Suppose that assumptions ($A_1$) and $(A_2)$ are satisfied. Let $Q={\rm diag}(m_j)$ be a diagonal matrix.
Then the following two statements hold.
\begin{enumerate}
\item [$(i)$] If
\begin{equation}\label{del}
\delta:=\ds\sum_{j=1}^n m_j\ge0,
\end{equation}
then $s\left(\la Q+D\right)>0$ for any $\la>0$.
\item[$(ii)$]If $\delta<0$, then there exists $\la_*>0$ such that
$s\left(\la Q+D\right)<0$ for $\la<\la_*$, $s\left(\la Q+D\right)>0$ for $\la>\la_*$, and
$s\left(\la Q+D\right)=0$ for $\la=\la_*$. Moreover, $s'(\la_*)>0$.
\end{enumerate}
\end{lemma}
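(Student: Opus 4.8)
The key object is the spectral bound $s(\la Q+D)$ as a function of $\la>0$. By the Perron–Frobenius theory (the matrix $\la Q+D$ is quasi-positive and irreducible, since adding the diagonal matrix $\la Q$ preserves these properties), $s(\la Q+D)$ is a \emph{simple} eigenvalue admitting a positive eigenvector $\bm\phi(\la)\gg\bm 0$. The plan is to study $s(\la):=s(\la Q+D)$ through three ingredients: (a) its value and behavior as $\la\to 0^+$, (b) its asymptotic slope as $\la\to+\infty$, and (c) its convexity in $\la$, which will force monotonicity-type conclusions and pin down the sign of the derivative at any zero.

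\medskip

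\textbf{Step 1: Endpoint at $\la=0$.} At $\la=0$ we have $s(0)=s(D)=0$ with eigenvector $\bm\eta\gg\bm 0$ given in \eqref{alp1}. Since $s(\la)$ is simple, it is analytic in $\la$ near $0$, so I can compute $s'(0)$ by standard eigenvalue perturbation. Differentiating $(\la Q+D)\bm\phi(\la)=s(\la)\bm\phi(\la)$ at $\la=0$ and pairing against the left eigenvector (which equals $\bm\eta$ by symmetry of $D$) gives
\begin{equation}\label{sprime0}
s'(0)=\frac{\langle\bm\eta,\,Q\bm\eta\rangle}{\langle\bm\eta,\bm\eta\rangle}=\frac{(1/n^2)\sum_{j=1}^n m_j}{(1/n^2)\,n}=\frac{1}{n}\sum_{j=1}^n m_j=\frac{\delta}{n}.
\end{equation}
Thus $s'(0)$ has the sign of $\delta$.

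\medskip

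\textbf{Step 2: Convexity.} Because $\la Q+D$ is symmetric (here $Q$ is real diagonal and $D$ is symmetric by $(A_1)$), the spectral bound equals the top of the spectrum and admits the Rayleigh-quotient variational characterization $s(\la)=\max_{\|\bm x\|_2=1}\langle\bm x,(\la Q+D)\bm x\rangle$. For each fixed $\bm x$ the map $\la\mapsto\langle\bm x,(\la Q+D)\bm x\rangle$ is affine in $\la$, and a pointwise maximum of affine functions is convex; hence $s(\la)$ is convex on $(0,\infty)$. This is the structural fact that does the real work.

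\medskip

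\textbf{Step 3: Assembling the two cases.} For part $(i)$, suppose $\delta\ge 0$. When $\delta>0$, \eqref{sprime0} gives $s'(0)>0$, and convexity makes $s$ nondecreasing-after-start; more directly, since $H^+\neq\emptyset$ by $(A_2)$ I can exhibit a test vector concentrated on a favorable patch to show $s(\la)>0$ strictly for all $\la>0$. When $\delta=0$ the affine-max representation still forces $s(\la)\ge\langle\bm\eta,(\la Q+D)\bm\eta\rangle=s(D)+\la\delta/n=0$, and the inequality is strict for $\la>0$ because the maximizer moves off $\bm\eta$ (using $(A_2)$ so that $Q$ is not a scalar on the relevant subspace). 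Either way $s(\la)>0$ for all $\la>0$. For part $(ii)$, suppose $\delta<0$, so $s'(0)=\delta/n<0$ and $s(0)=0$, whence $s(\la)<0$ for small $\la>0$. I then need a test vector showing $s(\la)\to+\infty$ (or at least becomes positive) as $\la\to\infty$: choosing $\bm x=\bm e_{j_0}$ for $j_0\in H^+$ gives $\langle\bm e_{j_0},(\la Q+D)\bm e_{j_0}\rangle=\la m_{j_0}+d_{j_0 j_0}\to+\infty$, so $s(\la)\to+\infty$. By continuity there is a zero. By convexity $s$ is convex with $s(0)=0$ and $s'(0)<0$, so $s$ is strictly decreasing until its minimum and strictly increasing thereafter; this yields a \emph{unique} positive zero $\la_*$, with $s(\la)<0$ on $(0,\la_*)$, $s(\la)>0$ on $(\la_*,\infty)$, and $s(\la_*)=0$. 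Finally, at a zero $s(\la_*)=0$ with $s$ convex and $s(0)=0$, $s'(0)<0$, the derivative $s'(\la_*)$ must be positive (the function has already turned upward), giving $s'(\la_*)>0$.

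\medskip

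\textbf{Main obstacle.} The delicate point is the $\delta=0$ boundary subcase of part $(i)$: here the naive estimate only gives $s(\la)\ge 0$, and strictness requires using irreducibility together with $(A_2)$ to argue that the Rayleigh maximizer genuinely differs from the constant vector $\bm\eta$, so that $\langle\bm x,(\la Q+D)\bm x\rangle$ strictly exceeds its value at $\bm\eta$. I expect to handle this either by a second-order perturbation computation of $s''(0)$ (showing $s$ leaves $0$ strictly upward) or by the strict version of the variational principle, and this is where care is needed to avoid a gap.
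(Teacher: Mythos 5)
Your overall architecture is sound and genuinely different from the paper's in one key place: you obtain convexity of $s(\la)$ from the Rayleigh-quotient representation $s(\la)=\max_{\|\bm x\|_2=1}\langle\bm x,(\la Q+D)\bm x\rangle$ as a pointwise maximum of affine functions, whereas the paper differentiates the eigenvalue equation twice and shows by an explicit computation that $s''(\la)\ge 0$ with equality if and only if $m_1=\cdots=m_n$. Your route is shorter and exploits the symmetry of $D$ more directly; the paper's route is longer but delivers the \emph{strict} convexity statement that is exactly what is needed in the borderline case. Likewise, for $\delta<0$ you get $s(\la)\to+\infty$ from the elementary test vector $\bm e_{j_0}$, $j_0\in H^+$, where the paper instead invokes $\lim_{\la\to\infty}s(\la)/\la=\max_j m_j$ from a cited lemma; both work, and your version is self-contained. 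The computation $s'(0)=\delta/n$ and the convexity bookkeeping in case $(ii)$ (unique zero $\la_*$, signs on either side, $s'(\la_*)>0$) are correct.

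The one place where your write-up is not yet a proof is the subcase $\delta=0$ of part $(i)$, which you yourself flag. Here plain convexity together with $s(0)=0$ and $s'(0)=0$ only yields $s(\la)\ge 0$. You propose two repairs, and either one closes the gap, but you must actually carry one out. The cleanest within your framework is the equality case of the variational principle: if $s(\la_0)=0$ for some $\la_0>0$, then $\bm\eta$ attains the maximum in the Rayleigh quotient (its value is $\la_0\delta/n=0$), hence $\bm\eta$ is an eigenvector of the symmetric matrix $\la_0Q+D$ for the eigenvalue $0$; since $D\bm\eta=\bm 0$, this forces $m_j=0$ for every $j$, contradicting $(A_2)$. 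The paper's repair is the other route you mention: it shows $s''(\la)>0$ whenever the $m_j$ are not all equal (which $(A_2)$ together with $\delta=0$ guarantees), so $s'$ is strictly increasing from $s'(0)=0$ and hence $s(\la)>0$ for $\la>0$. Separately, a small slip: in the $\delta>0$ case your parenthetical claim that the test vector $\bm e_{j_0}$ shows $s(\la)>0$ for \emph{all} $\la>0$ is false for small $\la$, because $d_{j_0j_0}=-\sum_{k\ne j_0}d_{kj_0}<0$; this is harmless, since the convexity bound $s(\la)\ge s'(0)\la=\delta\la/n>0$ already covers that case.
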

\begin{proof}
Since
$\la Q+D$ is quasi-positive and irreducible, we see that $s(\la)$ is a simple
eigenvalue of $\la Q+D$ with an
eigenvector $\bm w=(w_1,\dots,w_n)^T\gg\bm 0$. Without loss of generality, we assume that
\begin{equation}
\label{sumw}
\sum_{j=1}^n w_j=1\;\;\text{for any}\;\;\la>0.
\end{equation}
 Then
\begin{equation}\label{p1}
s(\la)w_j=\la m_jw_j+\ds\sum_{k\ne j}d_{jk}w_k-\ds\sum_{k\ne j}d_{jk}w_j\;\;\text{for any}\;\;j=1,\dots,n.
\end{equation}
Differentiating \eqref{p1} with respect to $\la$, we have
\begin{equation}\label{p2}
s'w_j+sw_j'=m_jw_j+\la m_jw_j'+\ds\sum_{k\ne j}d_{jk}w_k'-\ds\sum_{k\ne j}d_{jk}w_j'\;\;\text{for any}\;\;j=1,\dots,n.
\end{equation}
Multiplying \eqref{p2} by $w_j$ yields
\begin{equation}\label{p3}
s'w^2_j+sw_j'w_j=m_jw^2_j+\la m_jw_j'w_j+\ds\sum_{k\ne j}d_{jk}w_k'w_j-\ds\sum_{k\ne j}d_{jk}w_j'w_j\;\;\text{for any}\;\;j=1,\dots,n.
\end{equation}
Plugging \eqref{p1} into \eqref{p3}, we have
\begin{equation}\label{p4}
s'w_j^2=m_jw_j^2+\sum_{k\ne j} d_{jk}w_jw_k'-\sum_{k\ne j} d_{jk}w_j'w_k.
\end{equation}
Summing \eqref{p4} over all $j$, and noticing that $(d_{jk})_{n\times n}$ is symmetric, we have
 \begin{equation}\label{p6}
 s'\sum_{j=1}^n w_j^2=\ds\sum_{j=1}^n m_jw_j^2+\ds\sum_{j=1}^n\sum_{k\ne j} d_{jk} w_jw_k'-\ds\sum_{j=1}^n\ds\sum_{k\ne j}d_{jk}w_j'w_k
 =\ds\sum_{j=1}^n m_j w_j^2.
\end{equation}
Clearly, when $\la=0$, $s(\la)=0$ and $w_j=1/n$ for $j=1,\dots,n$, and consequently
\begin{equation}\label{fide}
s'(0)=\displaystyle\frac{\delta}{n}=\displaystyle\frac{1}{n}\sum_{j=1}^n m_j.
\end{equation}

To show the properties of function $s(\la)$, we need to analyze the second derivative of $s(\la)$.
Then differentiating \eqref{p2} with respect to $\la$, we have
\begin{equation}\label{sp1}
s''w_j+2s'w_j'+sw_j''=2m_jw_j'+\la m_jw_j''+\sum_{k\ne j}d_{jk}w_k''-\sum_{k\ne j}d_{jk} w_j''.
\end{equation}
Multiplying \eqref{sp1} by $w_j$ gives
\begin{equation}\label{sp2}
s''w^2_j+2s'w_j'w_j+sw_j''w_j=2m_jw_j'w_j+\la m_jw_j''w_j+\sum_{k\ne j}d_{jk}w_k''w_j-\sum_{k\ne j}d_{jk} w_j''w_j.
\end{equation}
Plugging \eqref{p1} and \eqref{p4} into \eqref{sp2},
we have
\begin{equation}\label{sp3}
s''w_j^2=\sum_{k\ne j}d_{jk}\left(w_jw_k''-w_j''w_k\right)-2\sum_{k\ne j}d_{jk}\left[w'_jw'_k-(w'_j)^2\f{w_k}{w_j}\right].
\end{equation}
Summing \eqref{sp3} over all $j$ and noticing that $(d_{jk})_{n\times n}$ is symmetric, we have
\begin{equation}\label{sedi}
\begin{split}
s''\ds\sum_{j=1}^nw_j^2=&\ds\sum_{j=1}^n\sum_{k\ne j}d_{jk}\left(w_jw_k''-w_j''w_k\right)-2\sum_{j=1}^n\sum_{k\ne j}d_{jk}\left[w'_jw'_k-(w'_j)^2\f{w_k}{w_j}\right]\\
=&-2\sum_{j=1}^n\sum_{k\ne j}d_{jk}\left[w'_jw'_k-(w'_j)^2\f{w_k}{w_j}\right]
=-2\sum_{j=1}^n\sum_{k\ne j}d_{jk}\left[w'_jw'_k-(w'_k)^2\f{w_j}{w_k}\right]\\
=&\sum_{j=1}^n\sum_{k\ne j}d_{jk}\left(\displaystyle\frac{\sqrt{w_k}}{\sqrt{w_j}}w_j'-\displaystyle\frac{\sqrt{w_j}}{\sqrt{w_k}}w_k'\right)^2\ge0.\\
\end{split}
\end{equation}
 Then it follows from \eqref{sedi} that
$s''(\la)\ge0$ for any $\la>0$. Moreover, $s''(\la)=0$ if and only if
\begin{equation*}
\displaystyle\frac{w_j'}{w_j}=\displaystyle\frac{w_k'}{w_k}\;\;\text{for any}\;\;j\ne k,
\end{equation*}
which implies that $w_j'=cw_j$ for any $j=1,\dots,n$. It follows from \eqref{sumw} that
$\sum_{j=1}^nw_j'=0$ for any $\la>0$. Then $c=0$, and consequently $w_j'=0$ for any $\la>0$ and $j=1,\dots,n$.
This, combined with \eqref{p2}, implies that $$m_j=s'(\la)\;\;\text{for any}\;\;j=1,\dots,n.$$
Therefore $s''(\la)\ge0$ and the equality holds if and only if $m_1=m_2=\dots=m_n$.

If $\delta=\ds\sum_{j=1}^n m_j>0$, then $s'(0)>0$ from \eqref{fide}. This, combined with the fact that $s''(\la)\ge0$, implies that $s'(\la)>0$ for any $\la>0$. If $\delta=0$, we see from assumption $(A_2)$ that $s''(\la)>0$, which also implies that $s'(\la)>0$ for any $\la>0$. Therefore, if $\delta\ge0$, $s(\la)$ is strictly monotone increasing for $\la>0$,
and consequently $s(\la Q+D)=s(\la)>0$ for any $\la>0$. This completes the proof of $(i)$.

If $\delta<0$, then
$s'(0)<0$, and $s''(\la)>0$ for any $\la>0$ from assumption $(A_2)$.
It follows from \cite[Lemma 3.4]{Allenpatch} that
\begin{equation}\label{lainf}
\lim_{\la\to\infty}s\left(Q+\displaystyle\frac{1}{\la} D\right)=\ds\max_{j=1,\dots,n}\{m_j\}.
\end{equation}
Noticing that
$$\displaystyle\frac{s(\la)}{\la}=s\left(Q+\displaystyle\frac{1}{\la} D\right),$$
we see from \eqref{lainf} that
\begin{equation*}
\ds\lim_{\la\to\infty}\displaystyle\frac{s(\la)}{\la}=\ds\max_{j=1,\dots,n}\{m_j\}>0.
\end{equation*}
Then $s(\la)>0$ for sufficiently large $\la$.
This, combined with the fact that $s'(0)<0$ and $s''(\la)>0$, implies that
$(ii)$ holds.
\end{proof}

Then we consider the global dynamics of model \eqref{ndif} when $\tau=0$,
which has been investigated in \cite{LiShuai2010}. Here we include it for completeness, and mainly show the effect of parameter $\la$ on the global dynamics.
\begin{lemma}\label{lm2.2}
Suppose that assumptions ($A_1$) and $(A_2)$ hold, and $\tau=0$.
\begin{enumerate}
\item[$(i)$] If $\delta\ge0$, where $\delta$ is defined as in \eqref{del},
 then system \eqref{ndif} admits a unique positive equilibrium
$\bm u_\la=(u_{\la, 1},\dots, u_{\la, n})^T\gg\bm 0$, which is globally asymptotically stable.

\item [$(ii)$] If $\delta<0$, then the trivial equilibrium $\bm 0= (0,\dots,0)^T$ of \eqref{ndif} is globally asymptotically for $\la\in(0,\la_*]$, and for $\la>\la_*$, system \eqref{ndif} admits a unique positive equilibrium
$\bm u_\la=(u_{\la,1},\dots, u_{\la,n})^T\gg\bm 0$, which is globally asymptotically stable, where $\la_*$ is defined as in Lemma \ref{lm2.1}.
\end{enumerate}
\end{lemma}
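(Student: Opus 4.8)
The plan is to recognize that, for $\tau=0$, system \eqref{ndif} generates a strongly monotone, strictly sublinear, dissipative semiflow, and then to read off the dichotomy in $(i)$ and $(ii)$ from the sign of $s(\la Q+D)$ supplied by Lemma \ref{lm2.1}. Writing the right-hand side as $F_j(\bm u)=\sum_{k=1}^n d_{jk}u_k+\la u_j(m_j-u_j)$, the off-diagonal Jacobian entries are $\partial F_j/\partial u_k=d_{jk}\ge0$ for $k\ne j$, so the system is cooperative; combined with the irreducibility of $D$ in $(A_1)$ this yields strong monotonicity on $\R_+^n$. The nonlinearity is strictly sublinear, since a direct computation gives $F_j(\alpha\bm u)-\alpha F_j(\bm u)=\la\alpha(1-\alpha)u_j^2>0$ for $\bm u\gg\bm 0$ and $0<\alpha<1$. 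Finally, because $D$ is symmetric with zero row sums, the constant state $\bm v=(v,\dots,v)^T$ obeys $\dot v_j=\la v(m_j-v)<0$ once $v>\max_j m_j$, so $[\bm 0,\bm v]$ is positively invariant and absorbing, giving dissipativity.

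Next I would analyze the two regimes separately. In the extinction regime $s(\la Q+D)\le0$, let $\bm w\gg\bm 0$ be the principal eigenvector, $(\la Q+D)\bm w=s(\la Q+D)\bm w$, and put $V(\bm u)=\langle\bm w,\bm u\rangle$. Using symmetry of $D$ to substitute $D\bm w=s(\la Q+D)\bm w-\la Q\bm w$, one obtains
\begin{equation*}
\dot V=s(\la Q+D)\,V-\la\sum_{j=1}^n w_ju_j^2,
\end{equation*}
which is nonpositive and vanishes only at $\bm 0$; LaSalle's invariance principle then yields $\bm u(t)\to\bm 0$. In the persistence regime $s(\la Q+D)>0$, the same eigenvector gives a small strict sub-solution, since $\bm u^\ep=\ep\bm w$ satisfies $\dot u_j^\ep=\ep w_j\big(s(\la Q+D)-\la\ep w_j\big)>0$ for $\ep$ small, and $\ep\bm w\ll\bm v$. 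Monotone iteration between $\ep\bm w$ and $\bm v$ produces an equilibrium $\bm u_\la$, which is strictly positive because irreducibility forbids equilibria on $\partial\R_+^n$. Strict sublinearity then forces uniqueness of $\bm u_\la$ and its global attractivity among nonzero orbits, via the classical dichotomy for such semiflows (cf.\ \cite{LiShuai2010}).

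It then remains to translate through Lemma \ref{lm2.1}: if $\delta\ge0$ then $s(\la Q+D)>0$ for all $\la>0$, giving $(i)$; if $\delta<0$ then $s(\la Q+D)$ is negative, zero, or positive according as $\la<\la_*$, $\la=\la_*$, or $\la>\la_*$, giving $(ii)$. I expect the main obstacle to be precisely the threshold $\la=\la_*$, where $s(\la Q+D)=0$ and linearization cannot decide the stability of $\bm 0$. This is handled by the Lyapunov function above, which at $s(\la Q+D)=0$ reduces to $\dot V=-\la\sum_j w_ju_j^2$, still vanishing only at $\bm 0$, so $\bm 0$ remains globally attracting at the critical value. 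As a consistency check, no positive equilibrium can exist when $s(\la Q+D)\le0$: any such $\bm u_\la$ would satisfy $[(\la Q+D)\bm u_\la]_j=\la u_{\la,j}^2>0$, and pairing this with $\bm w$ and invoking symmetry would force $s(\la Q+D)>0$, a contradiction.
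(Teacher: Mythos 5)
Your proposal is correct and follows the same overall skeleton as the paper's proof: verify that for $\tau=0$ the system is cooperative and irreducible (hence strongly monotone), strictly sublinear, and has bounded orbits, conclude that the global dynamics is a threshold dichotomy governed by the sign of $s(\la Q+D)$, and then convert the sign of $s(\la Q+D)$ into the sign of $\delta$ and the threshold $\la_*$ via Lemma \ref{lm2.1}. The difference is one of self-containedness: the paper cites \cite[Theorem B.3]{SmithW} for strong monotonicity and then outsources the entire dichotomy to \cite[Corollary 3.2]{ZhaoJing}, whereas you re-derive two-thirds of that dichotomy by hand --- the extinction half (including the delicate critical case $\la=\la_*$, where $s(\la Q+D)=0$) via the linear Lyapunov function $V(\bm u)=\langle \bm w,\bm u\rangle$ built from the principal eigenvector and the symmetry of $D$, and the existence half via the strict sub-solution $\ep\bm w$ and monotone iteration, together with the nonexistence check for $s(\la Q+D)\le 0$. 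You still fall back on the classical sublinearity dichotomy for uniqueness and global attractivity of $\bm u_\la$, so the external input is not eliminated, only reduced. What your route buys is an explicit, verifiable treatment of the borderline case $\la=\la_*$ and of why no positive equilibrium can coexist with extinction; what the paper's route buys is brevity and a single clean reference covering all three conclusions at once. Both are valid proofs of the stated lemma.
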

\begin{proof}
Let $\Psi_t$ be the corresponding flow of \eqref{ndif} for $\tau=0$, and let $$\bm{ g}(\bm u)=\left(g_1(\bm u),\dots,\ g_n(\bm u)\right)^T$$ be the vector field of \eqref{ndif}, where
\begin{equation*}
g_j(\bm u)=\sum_{k=1}^n d_{jk} u_k+\la u_j(m_j-u_j).
\end{equation*}
Note that $L$ is irreducible and quasi-positive from $(A_1)$. Then it follows from \cite[Theorem B.3]{SmithW} that
$\Psi_t$ is strongly positive and monotone.
For any give $\alpha\in(0,1)$ and $\bm u\gg \bm 0$,
$$g_j(\alpha\bm u)-\alpha g_j(\bm u)=\alpha \la u^2_j(1-\alpha)>0\;\;\text{for any}\;\;j=1,\dots,n,$$
which implies that $\bm g(\bm u)$ is strictly sublinear.
Note that the solution of \eqref{ndif} is bounded. Then we see from \cite[Corollary 3.2]{ZhaoJing}
 that the global dynamics of model \eqref{ndif} for $\tau=0$ is determined by $s(\la Q+D)$, where $Q=\text{diag}(m_i)$. That is, for $\tau=0$,
 \begin{enumerate}
 \item [$(i)$]
if $s(\la Q+D)\le0$, then the trivial equilibrium $\bm 0=(0,\dots,0)^T$ is globally asymptotically stable;
\item [$(ii)$]
if $s(\la Q+D)>0$, then system \eqref{ndif} admits a unique positive equilibrium $\bm u_\la=(u_{\la,1},\dots, u_{\la_n,n})^T\gg\bm 0$, which is globally asymptotically stable.
\end{enumerate}
 This, combined with Lemma \ref{lm2.1} implies that, for $\tau=0$,
\begin{enumerate}
\item [$(i)$]
if $\delta\ge0$,
 then system \eqref{ndif} admits a unique positive equilibrium
$\bm u_\la$, which is globally asymptotically stable;
\item [$(ii)$]
if $\delta<0$, then the trivial equilibrium $\bm 0=(0,\dots,0)^T$ of \eqref{ndif} is globally asymptotically for $\la\in(0,\la_*]$, and for $\la>\la_*$, system \eqref{ndif} admits a unique positive equilibrium
$\bm u_\la$, which is globally asymptotically stable.
\end{enumerate}
\end{proof}

To show the existence of Hopf bifurcation for model \eqref{ndif}, we need to show the asymptotic profile of $\bm u_\la$. Clearly,  $\bm u_\la$ satisfies
\begin{equation}\label{ndifsteady}
\ds\sum_{k=1}^n d_{jk} u_k+\la u_j(m_j-u_j)=0,\;\;j=1,\dots,n.\\
\end{equation}
Then we have the following result.
\begin{lemma}\label{lm2.3}
Suppose that assumptions ($A_1$) and $(A_2)$ hold. Let $\bm u_\la=(u_{\la,1},\dots,u_{\la,n})^T$ be the positive equilibrium of \eqref{ndif} (or respectively, positive solution of \eqref{ndifsteady}).
\begin{enumerate}
\item[$(i)$] If $\delta>0$, then $\bm u_\la$ is continuously differentiable for $\la\in(0,\infty)$. Moreover, define
\begin{equation}\label{lims1}
\bm u_{0}=(u_{0,1},\dots,u_{0,n})^T, \;\;\text{where}\;\; u_{0,j}=\displaystyle\frac{\delta}{n}\;\;\text{for}\;\;j=1,\dots,n,
\end{equation}
and then $\bm u_\la$ is continuously differentiable for $\la\in[0,\infty)$.
\item [$(ii)$] If $\delta<0$, then there exist $\alpha_\la$ and ${\bm \xi}_\la$, which are continuously differentiable for $\la\in[\la_*,\infty)$, such that
${\bm u}_\la$ has the following form
\begin{equation}\label{formu}
{\bm u}_\la=\alpha_\la(\la-\la_*)\left[\hat{\bm\eta}+(\la-\la_*){\bm \xi}_\la\right]\;\; \text{for}\;\;\la\in(\la_*,\infty),
\end{equation}
where $\alpha_\la>0$, $\hat{\bm \eta}= (\hat\eta_1,\dots,\hat\eta_n)^T\gg\bm 0$ ($\sum_{j=1}^n \hat\eta_j=1$) is the eigenvector of $\la_*Q+D$ associated with eigenvalue $s(\la_*Q+D)=0$, and
\begin{equation}\label{X1}
 \bm \xi_\la=(\xi_{\la,1},\dots,\xi_{\la,n})^T\in \hat X_1:=\left\{(x_1,\dots,x_n)^T\in \mathbb R^n:\sum_{j=1}^n\hat \eta_jx_j=0\right\}.
\end{equation}
Moreover,
\begin{equation}\label{lims2}
 \alpha_{\la_*}=\displaystyle\frac{\sum_{j=1}^nm_j\hat\eta_j^2}{\la_*\sum_{j=1}^n\hat\eta_j^3},
\end{equation}
and $\bm \xi_{\la_*}=(\xi_{\la_*,1},\dots,\xi_{\la_*,n})^T$ is the unique solution of the following equation
\begin{equation}\label{xi}
\ds\sum_{k=1}^nd_{jk}\xi_k+\la_*m_j\xi_j+m_j\hat\eta_j-\la_*\alpha_{\la_*}\hat\eta_j^2=0, \;\;j=1,\dots,n.
\end{equation}
\end{enumerate}
\end{lemma}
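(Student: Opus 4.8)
The steady-state equation \eqref{ndifsteady} can be written as $F(\bm u,\la)=\bm 0$ with
$$F(\bm u,\la)=D\bm u+\la Q\bm u-\la\bm u^{(2)},\qquad \bm u^{(2)}:=(u_1^2,\dots,u_n^2)^T,$$
and the whole argument rests on the implicit function theorem applied to the analytic map $F$, with extra care at the two degenerate parameter values $\la=0$ (in (i)) and $\la=\la_*$ (in (ii)). The routine part is differentiability on the open intervals. At a positive equilibrium $\bm u_\la\gg\bm 0$ the Jacobian is $D_{\bm u}F=D+\la Q-2\la\,{\rm diag}(u_{\la,j})$. Rewriting \eqref{ndifsteady} as $(D+\la\,{\rm diag}(m_j-u_{\la,j}))\bm u_\la=\bm 0$ shows that $B:=D+\la\,{\rm diag}(m_j-u_{\la,j})$ is quasi-positive and irreducible with positive kernel vector $\bm u_\la$, so by Perron--Frobenius $s(B)=0$. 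Since $D_{\bm u}F=B-\la\,{\rm diag}(u_{\la,j})$ is obtained from $B$ by subtracting a strictly positive diagonal, strict monotonicity of the spectral bound gives $s(D_{\bm u}F)<s(B)=0$; hence $D_{\bm u}F$ is invertible, the implicit function theorem yields a $C^1$ branch, and uniqueness in Lemma \ref{lm2.2} identifies it with $\bm u_\la$. This proves $C^1$-dependence on $(0,\infty)$ for (i) and on $(\la_*,\infty)$ for (ii).

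For the endpoint $\la=0$ in (i), the relation $D\bm\eta=\bm 0$ makes $D_{\bm u}F=D$ singular there, so I would remove the degeneracy by substituting $\bm u=s\bm\eta+\la\bm z$ with $s\in\R$ and $\bm z\in\hat X:=\{x:\sum_j x_j=0\}=\mathscr R(D)$. Using $D\bm\eta=\bm 0$ one factors $F=\la\widetilde F$, where $\widetilde F(s,\bm z,\la)=D\bm z+Q(s\bm\eta+\la\bm z)-(s\bm\eta+\la\bm z)^{(2)}$, so for $\la>0$ solving \eqref{ndifsteady} is equivalent to $\widetilde F=\bm 0$. At $\la=0$, splitting $\widetilde F=\bm 0$ along ${\rm span}\{\bm\eta\}\oplus\hat X$ gives a bifurcation equation which reduces to $s(\delta-s)=0$ (forcing $s=\delta$, since $\sum_j m_j\eta_j^2=\delta/n^2$ and $\bm\eta^{(2)}\in{\rm span}\{\bm\eta\}$) together with a uniquely solvable equation for $\bm z$ in $\hat X$. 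A direct computation shows $D_{(s,\bm z)}\widetilde F$ at $(\delta,\bm z_0,0)$ is invertible: its $\hat X$-block is the isomorphism $D|_{\hat X}$, while its ${\bm\eta}$-component picks up $-\delta/n\ne 0$. Hence the implicit function theorem produces a $C^1$ solution $(s(\la),\bm z(\la))$ near $\la=0$, so that $\bm u_\la=s(\la)\bm\eta+\la\bm z(\la)$ extends $C^1$ to $\la=0$ with $\bm u_0=\delta\bm\eta=(\delta/n,\dots,\delta/n)^T$.

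Part (ii) is the crux, because there $\bm u_\la$ bifurcates from the trivial solution at $\la=\la_*$, where the linearization $D+\la_*Q$ at $\bm u=\bm 0$ is singular with kernel ${\rm span}\{\hat{\bm\eta}\}$. Here I would write $\bm u=\epsilon(\hat{\bm\eta}+\bm\phi)$ with $\epsilon\in\R$ and $\bm\phi\in\hat X_1$, and divide $F=\bm 0$ by $\epsilon$ to obtain the regularized system
$$G(\bm\phi,\epsilon,\la):=(D+\la Q)(\hat{\bm\eta}+\bm\phi)-\la\epsilon(\hat{\bm\eta}+\bm\phi)^{(2)}=\bm 0,$$
which holds at $(\bm\phi,\epsilon,\la)=(\bm 0,0,\la_*)$. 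Since $D+\la_*Q$ is symmetric, $\mathscr R(D+\la_*Q)=\hat X_1$, and one checks that $D_{(\bm\phi,\epsilon)}G(\bm 0,0,\la_*):\hat X_1\times\R\to\R^n$, $(\bm h,t)\mapsto (D+\la_*Q)\bm h-t\la_*\hat{\bm\eta}^{(2)}$, is invertible precisely because $\langle\hat{\bm\eta},\hat{\bm\eta}^{(2)}\rangle=\sum_j\hat\eta_j^3>0$ supplies the missing $\hat{\bm\eta}$-direction. The implicit function theorem then gives an analytic branch $(\bm\phi(\la),\epsilon(\la))$ with $\bm\phi(\la_*)=\bm 0$ and $\epsilon(\la_*)=0$; setting $\alpha_\la=\epsilon(\la)/(\la-\la_*)$ and $\bm\xi_\la=\bm\phi(\la)/(\la-\la_*)$, which extend analytically across $\la_*$ because $\epsilon$ and $\bm\phi$ vanish there, produces the form \eqref{formu}.

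Finally, to identify the coefficients I would differentiate $G=\bm 0$ in $\la$ at $(\bm 0,0,\la_*)$, obtaining $(D+\la_*Q)\bm\phi'(\la_*)=\la_*\epsilon'(\la_*)\hat{\bm\eta}^{(2)}-Q\hat{\bm\eta}$. Taking the inner product with $\hat{\bm\eta}$ annihilates the left-hand side (as $\mathscr R(D+\la_*Q)\perp\hat{\bm\eta}$) and yields $\epsilon'(\la_*)=\alpha_{\la_*}=\dfrac{\sum_j m_j\hat\eta_j^2}{\la_*\sum_j\hat\eta_j^3}$, which is \eqref{lims2}; reading off the remaining components recovers exactly \eqref{xi} for $\bm\xi_{\la_*}=\bm\phi'(\la_*)$, whose unique solvability in $\hat X_1$ is guaranteed by the solvability condition just used. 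The positivity $\alpha_{\la_*}>0$, needed so that $\bm u_\la\gg\bm 0$ for $\la>\la_*$, follows by comparing the numerator with identity \eqref{p6} at $\la=\la_*$ (where the normalized eigenvector is $\hat{\bm\eta}$), namely $s'(\la_*)\sum_j\hat\eta_j^2=\sum_j m_j\hat\eta_j^2$, together with $s'(\la_*)>0$ from Lemma \ref{lm2.1}. The main obstacle throughout is choosing the rescalings that turn the two degenerate bifurcation points into regular problems for the implicit function theorem, and carrying out the Fredholm/solvability bookkeeping that extracts $\alpha_{\la_*}$ and $\bm\xi_{\la_*}$.
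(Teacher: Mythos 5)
Your proof is correct, and while the overall strategy (split along the kernel direction, rescale away the degeneracy, apply an implicit-function-type theorem, then invoke uniqueness of the positive equilibrium) matches the paper's, the execution differs at both degenerate points. For $(i)$ at $\la=0$ the paper keeps the unscaled decomposition $\bm u=c\bm\eta+\bm w$, so that $\bm h(0,c,\bm 0)=\bm 0$ for every $c$, and must invoke the Crandall--Rabinowitz theorem to bifurcate from this line of trivial solutions; you instead divide out the factor $\la$ (legitimate since $D\bm\eta=\bm 0$) and reduce to a plain implicit function theorem for $\widetilde F$, which is arguably more elementary — the nondegeneracy condition $-\delta/n\ne 0$ you check is exactly the paper's transversality condition \eqref{cl1} in disguise. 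For $(ii)$ the paper builds both factors of $(\la-\la_*)$ into the ansatz \eqref{formu} before applying the implicit function theorem to $(\alpha,\bm\xi)$, whereas you first obtain an analytic branch $(\bm\phi(\la),\epsilon(\la))$ vanishing at $\la_*$ and then divide by $(\la-\la_*)$ a posteriori. The one place where your route needs genuine care is precisely this division: if the branch were merely $C^1$, the quotients $\epsilon(\la)/(\la-\la_*)$ and $\bm\phi(\la)/(\la-\la_*)$ would only be continuous at $\la_*$, not $C^1$, which is what the lemma asserts. You cover this correctly by noting that $G$ is polynomial so the implicit function theorem yields a real-analytic branch; the paper's formulation sidesteps the issue entirely by pre-scaling. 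Your identification of $\alpha_{\la_*}$ and $\bm\xi_{\la_*}$ via the solvability condition $\mathscr R(D+\la_*Q)=\hat X_1$, and the positivity argument via \eqref{p6} and $s'(\la_*)>0$, agree with the paper's. One small omission: after producing the branch in $(ii)$ you should explicitly invoke uniqueness of the positive equilibrium (as you did on the open intervals) to conclude that the branch coincides with $\bm u_\la$ for $\la>\la_*$, using $\epsilon'(\la_*)=\alpha_{\la_*}>0$ to see the branch is positive there.
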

\begin{proof}
We first prove $(i)$. Clearly, $\bm u_\la$ is continuously differentiable for $\la\in(0,\infty)$.
Then we will show that $\bm u_\la$ is continuously differentiable for $\la\in[0,\infty)$, if $\bm u_0$ is defined as in \eqref{lims1}.

Let
\begin{equation}\label{hX1}
X_1=\left\{(x_1,\dots, x_n)^T\in\mathbb R^n:\sum_{i=1}^n x_i=0\right\}.
\end{equation}
Then $\mathbb R^n={\rm span}\{\bm\eta\}\oplus X_1$.
Letting
\begin{equation}\label{spe}
\bm u= c\bm \eta +\bm w,
 \end{equation}
where $c\in\mathbb R$, $\bm\eta$ is defined as in \eqref{alp1}, and $\bm w\in X_1$,
and substituting it
into \eqref{ndifsteady}, we see that
\begin{equation*}
h_j(\la,c,\bm w):=\sum_{k=1}^n d_{jk}w_k+\la(c\eta_j+w_j)\left(m_j-c\eta_j-w_j\right)=0\;\;\text{for}\;\;j=1,\dots,n.
\end{equation*}
Define $\bm h: \mathbb R\times \mathbb R\times X_1 \mapsto \mathbb R^n$ by
$$\bm h(\la,c,\bm w)=\left(h_1(\la,c,\bm w),\dots, h_n(\la,c,\bm w)\right)^T,$$
where $\bm w=(w_1,\dots,w_n)^T\in X_1$.
Then
 we see that $(\la, \bm u)$ solves \eqref{ndifsteady}, where $\la>0$ and $\bm u\in \mathbb R^n $,
if and only if $\bm h(\la,c,\bm w)=\bm 0$
is solvable for some value of $\la>0$, $c\in \mathbb R$ and $\bm w\in X_1$.

Clearly, $\bm h(0,c,\bm 0)=\bm 0$ for any $c\in \mathbb R$. A direct computation implies that
\begin{equation*}
\begin{split}
D_{(\la,\bm w)}\bm h(0,c,\bm 0)[\sigma,\bm v]=\left(\begin{array}{c}
\sum_{k=1}^nd_{1k}v_k +\displaystyle\frac{\sigma c}{n}\left(m_1-\displaystyle\frac{c}{n}\right)\\
\sum_{k=1}^nd_{2k}v_k + \displaystyle\frac{\sigma c}{n}\left(m_2-\displaystyle\frac{c}{n}\right)\\
\vdots\\
\sum_{k=1}^nd_{nk}v_k +\displaystyle\frac{\sigma c}{n}\left(m_n-\displaystyle\frac{c}{n} \right)\\
\end{array}\right),
\end{split}
\end{equation*}
where $\bm v =(v_1,\dots,v_n)^T$, and
$D_{(\la,\bm w)}\bm h(0,c,\bm 0)$
is the Fr\'echet derivative of $\bm h(\la,c,\bm w)$ with respect to $(\la,\bm w)$ at $(0,c,\bm 0)$.
Since
$$\left(\displaystyle\frac{\sigma \delta}{n}\left(m_1-\displaystyle\frac{\delta}{n}\right),\dots, \displaystyle\frac{\sigma \delta}{n}\left(m_n-\displaystyle\frac{\delta}{n}\right)\right)^T\in X_1,$$
where $\delta=\sum_{j=1}^n m_j$ is defined as in \eqref{del},
it follows that there exists a unique $\bm v^*\in X_1$ such that
$$D_{(\la,\bm w)}\bm h(0,\delta,\bm 0)[1,\bm v^*]=\bm 0,$$
and consequently,
$$\mathscr{N}( D_{(\la,\bm w)}\bm h(0,\delta,\bm 0))=\{(s,s\bm v^*):s\in\mathbb R\}.$$
Then we calculate that
$$D_cD_{(\la,\bm w)}\bm h(0,\delta,\bm 0)[1,\bm v^*]=\left(\displaystyle\frac{m_1}{n}-\displaystyle\frac{2\delta}{n^2},\dots,\displaystyle\frac{m_n}{n}-\displaystyle\frac{2\delta}{n^2}\right)^T,$$
where $D_cD_{(\la,\bm w)}\bm h(0,\delta,\bm 0)$ is the Fr\'echet derivative of $D_{(\la,\bm w)}\bm h(\la,c,\bm w)$ with respect to $c$ at $(0,\delta,\bm 0)$.
We claim that
\begin{equation}\label{cl1}D_cD_{(\la,\bm w)}\bm h(0,\delta,\bm 0)[1,\bm v^*]\not\in \mathscr{R}\left(D_{(\la,\bm w)}\bm h(0,\delta,\bm 0)\right).
\end{equation}
If it is not true, then there exists $(\hat\sigma,\hat {\bm v})$ such that
\begin{equation*}
\begin{split}
\sum_{k=1}^nd_{jk}\hat v_k + \displaystyle\frac{\hat \sigma \delta}{n}\left(m_j-\displaystyle\frac{\delta}{n}\right)=\displaystyle\frac{m_j}{n}-\displaystyle\frac{2\delta}{n^2}\;\;\text{for any}\;\;j=1,\dots,n.
\end{split}
\end{equation*}
This implies that
$$\left(\displaystyle\frac{m_1}{n}-\displaystyle\frac{2\delta}{n^2},\dots,\displaystyle\frac{m_n}{n}-\displaystyle\frac{2\delta}{n^2}\right)^T\in X_1,$$
which contradicts the fact that
$$\sum_{j=1}^n m_j-2\delta=-\delta\ne0.$$
Therefore, Eq. \eqref{cl1} holds, and we see from the Crandall-Rabinowitz bifurcation theorem
\cite{Crandall} that the solutions of $\bm h(\la,c,\bm w)=0$ near $(0,\delta,\bm 0)$ consist precisely by the curves
$\{(c,0,0):c\in \mathbb {R}\}$ and $$\{(\la(y),c(y),\bm w(y)): y\in(-\epsilon,\epsilon)\},$$ where $(\la(y),c(y),\bm w(y))$ is continuously differentiable,  $\la(0)=0$, $c(0)=\delta$, $\bm w(0)=\bm 0$, $\bm w'(0)=\bm v^*$, and $\la'(0)=1$.
Since $\la'(0)=1>0$, $\la(y)$ has a continuously differentiable inverse function $y(\la)$ for small $y$. Then there exist $\la_1>0$ and
a continuously  differentiable function $$\hat {\bm u}_\la=c(y(\la))\bm \eta+\bm w(y(\la)):[0,\la_1]\mapsto \mathbb R^n$$ such
that $\hat {\bm u}_\la$ satisfies \eqref{ndifsteady}
for $\la\in(0,\la_1]$.
Moreover, $$\hat{\bm  u}_0=c(y(0))\bm \eta+\bm w(y(0))=c(0)\bm \eta+\bm w(0)=\delta\bm\eta.$$
The uniqueness of the positive equilibrium of \eqref{ndif} implies that $\bm u_\la=\hat {\bm u}_\la$ for $\la\in(0,\la_1]$.
Therefore, letting $\bm u_0=(u_{0,1},\dots,u_{0,n})^T$,
we obtain that $\bm u_\la$ is continuously differentiable for $\la\in[0,\infty)$.

Now we prove $(ii)$. Since
\begin{equation*}
\mathbb R^n={\rm span}\{\hat {\bm\eta}\}\oplus \hat X_1,
\end{equation*}
it follows that $\bm u_\la$ can be represented as \eqref{formu}, and $\alpha_\la$ and $\bm \xi_\la$ are continuously differentiable for $\la\in(\la_*,\infty)$.
Then we will show that $\alpha_\la$ and $\bm \xi_\la$ are continuously differentiable for $\la\in[\la_*,\infty)$, if $\alpha_{\la_*}$ and $\bm \xi_{\la_*}$ are defined as in \eqref{lims2} and \eqref{xi} respectively.

It follows from Lemma \ref{lm2.1} that $s'(\la_*)>0$. Then we see from \eqref{p6} that
\begin{equation*}
\sum_{j=1}^nm_j\hat \eta_j^2>0,
\end{equation*}
which implies that $\alpha_{\la_*}$ is well defined and positive. Since
\begin{equation*}
\sum_{j=1}^nm_j\hat \eta_j^2-\la_*\alpha_{\la_*}\sum_{j=1}^n\hat\eta_j^3=0,
\end{equation*}
we see that ${\bm \xi}_{\la_*}$ is also well defined.

Substituting
$$\bm u=\alpha(\la-\la_*)\left[\hat{\bm \eta}+(\la-\la_*){\bm \xi}\right]$$
into \eqref{ndifsteady}, where $\bm \xi=(\xi_1,\dots, \xi_n)^T\in
\hat X_1$,  we have, for any $j=1,\dots,n$,
\begin{equation*}
\begin{split}
&p_j(\la,\alpha,\bm\xi)
:=\sum_{k=1}^nd_{jk}\xi_k+\la_*m_j\xi_j+m_j\left[\hat\eta_j+(\la-\la_*)\xi_j\right]-\la\alpha\left[\hat\eta_j+(\la-\la_*)\xi_j\right]^2=0.
\end{split}
\end{equation*}
Define $\bm p(\la, \alpha,\bm \xi):\mathbb R\times \mathbb R\times \hat X_1\mapsto \mathbb R^n$ by
\begin{equation*}
\bm p(\la, \alpha,\bm \xi)=\left(p_1(\la,\alpha,\bm\xi),\dots,p_n(\la,\alpha,\bm\xi)\right)^T.
\end{equation*}
Then $(\la,\bm u)$ solves \eqref{ndifsteady} if and only if $\bm p(\la,\alpha,\bm \xi)=\bm 0$.
Clearly, $\bm p(\la_*,\alpha_{\la_*},{\bm \xi}_{\la_*})=\bm 0$, and  the Fr\'echet derivative of $\bm p$ with respect to $(\alpha,\bm \xi)$ at $(\la_*,\alpha_{\la_*},{\bm \xi}_{\la_*})$ is
\begin{equation*}
D_{(\alpha,\bm \xi)}\bm p(\la_*,\alpha_{\la_*},{\bm \xi}_{\la_*})[\epsilon,\bm v]=\left(\begin{array}{c}
\sum_{k=1}^nd_{1k}v_k+\la_*m_1v_1-\la_*\hat\eta_1^2\epsilon \\
\sum_{k=1}^nd_{2k}v_k+\la_*m_2v_2-\la_*\hat\eta_2^2\epsilon \\
\vdots\\
\sum_{k=1}^nd_{nk}v_k+\la_*m_nv_n-\la_*\hat\eta_n^2\epsilon \\
\end{array}\right),
\end{equation*}
where $\bm v =(v_1,\dots,v_n)^T\in\hat X_1$. A direct computation implies that
$D_{(\alpha,\bm \xi)}\bm p(\la_*,\alpha_{\la_*},{\bm \xi}_{\la_*})$ is bijective from $\mathbb R\times\hat X_1$ to $\mathbb R^n$.
Then from the implicit function theorem, there
exist $\la^*>\la_*$ and a continuously differentiable mapping $\la\in[\la_*,\la^*]\mapsto(\tilde \alpha_\la,\tilde{\bm \xi}_\la)\in {\mathbb R}\times \hat X_1$ such that
${\bm p}(\la,\tilde \alpha_\la,\tilde{\bm \xi}_\la)=\bm 0$, and $\tilde \alpha_\la=\alpha_{\la_*}$ and $\tilde{\bm \xi}_{\la_*}={\bm \xi}_{\la_*}$ for $\la=\la_*$.
The uniqueness of the positive equilibrium of \eqref{ndif} implies that $\alpha_\la=\tilde \alpha_\la$ and $\bm \xi_\la=\tilde{\bm \xi}_\la$ for $\la\in(\la_*,\la^*]$. Therefore, $\alpha_\la$ and $\bm \xi_\la$ are continuously differentiable
for $\la\in[\la_*,\infty)$.
\end{proof}

\subsection{The eigenvalue problem}
In this section,
we consider the eigenvalue problem associated with the positive equilibrium $\bm u_\la=(u_{\la,1},\dots,u_{\la,n})^T$.
 Linearizing \eqref{ndif} at $\bm{u}_\la$, we have
\begin{equation}
\label{linear}
\displaystyle\frac{d \bm {v}}{d t} =D\bm{ v}+\la {\rm diag}\left(m_j-u_{\la,j}\right)\bm v-\la {\rm diag}( u_{\la,j})\bm{ v}(t-\tau).
\end{equation}
It follows from \cite{Hale1971} that the infinitesimal generator $A_\tau(\la)$ of the solution semigroup of
\eqref{linear} is defined by
\begin{equation}\label{Ataula}A_\tau(\la) \bm{\Psi}=\dot{\bm{\Psi}},\end{equation}
and the domain of $A_\tau(\la)$ is
\begin{equation*}
\begin{split}
 &\mathscr{D}(A_\tau(\la)) = \big\{\bm {\Psi}\in C_\mathbb{C}
\cap C^1_\mathbb{C}:\ \bm \Psi(0)\in \mathbb{C}^n,\dot{\bm \Psi}(0)=D\bm{\Psi}(0)+\la \textrm{diag}\left(m_j-u_{\la,j}\right)\bm{ \Psi}(0)\\
&~~~~~~~~~~~~~~~~~-\la\textrm{ diag}( u_{\la,j})\bm{ \Psi}(-\tau) \big\},
\end{split}
\end{equation*}
where $C_{\mathbb C}=C([-\tau,0],\mathbb{C}^n)$ and $C^1_\mathbb{C}=C^1([-\tau,0],\mathbb{C}^n)$. Then, $\mu\in\mathbb{C}$ is an eigenvalue of $A_\tau(\la)$ iff there exists $\bm{\psi}=(\psi_1,\dots,\psi_n)^T(\ne\bm 0)\in\mathbb{C}^n$ such that $\Delta(\la,\mu,\tau)\bm \psi=\bm 0$,
where
\begin{equation}\label{triangle}
\begin{split}
&\Delta(\la,\mu,\tau)\bm{\psi}
:=D\bm{\psi}+\la \textrm{diag}\left(m_j-u_{\la,j}\right)\bm \psi-\la e^{-\mu\tau}\textrm{diag}( u_{\la,j})\bm{ \psi}-\mu\bm{\psi}.
\end{split}
\end{equation}
Therefore, $A_\tau(\la)$ has a purely imaginary eigenvalue $\mu=\textrm i\nu\
(\nu>0)$ for some $\tau\ge0$, iff
\begin{equation}\label{eigen}
\begin{split}
D\bm{\psi}+\la \textrm{diag}\left(m_j-u_{\la,j}\right)\bm \psi-\la e^{-\textrm i\theta}\textrm{diag}( u_{\la,j})\bm{ \psi}-\textrm{i}\nu\bm{\psi}=\bm 0
\end{split}
\end{equation}
is solvable for some value of $\nu>0$, $\theta\in[0,2\pi)$, and $\bm{\psi}(\ne \bm 0)\in \mathbb{C}^n$.
Note from Lemma \ref{lm2.3} that the properties of $\bm u_\la$ are different for $\delta>0$ and $\delta<0$, where $\delta=\sum_{j=1}^n m_j$ is defined as in Eq. \eqref{del}.
Then the following discussion is divided into two cases:
\begin{equation*}
\text{Case I:}\;\;\delta>0,\;\;\;\;\text{Case II:}\;\;\delta<0.
\end{equation*}

\subsection{Case I: $\delta>0$}
In this subsection, we consider the stability/instability of the unique positive equilibrium $\bm u_\la$ of system \eqref{ndif} and the associated Hopf bifurcation near $\bm u_\la$ for the case of $\delta>0$.
It follows from Lemma \ref{lm2.3} that $\bm u_\la$ is continuously differentiable for $\la\in[0,\infty)$, and $\bm u_0=(u_{0,1},\dots,u_{0,n})^T$, where
\begin{equation*}
u_{0,j}=\displaystyle\frac{\delta}{n}\;\;\text{for}\;\;j=1,\dots,n.
\end{equation*}
We firstly give an estimate for solutions of Eq. \eqref{eigen}.
\begin{lemma}\label{nu}
Assume that $(\nu_\la,\theta_\la,\bm{\psi}_\la )$ solves \eqref{eigen}, where $\bm{\psi}_\la=(\psi_{\la,1},\dots,\psi_{\la,n})^T(\ne\bm 0)
\in\mathbb{C}^n$.  Then for any $\la_1>0$,
$\left|\ds
\frac{\nu_\la}{\la}\right|$ is bounded for
$\la\in(0,\la_1]$.\end{lemma}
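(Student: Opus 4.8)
The plan is to pair equation \eqref{eigen} with the eigenvector $\bm\psi_\la$ itself in the Hermitian inner product $\langle\cdot,\cdot\rangle$ and then extract the imaginary part. Writing $\bm\psi_\la=(\psi_{\la,1},\dots,\psi_{\la,n})^T$ and taking $\langle\bm\psi_\la,\,\cdot\,\rangle$ of both sides of \eqref{eigen} gives
\[
\langle\bm\psi_\la, D\bm\psi_\la\rangle
+\la\sum_{j=1}^n(m_j-u_{\la,j})|\psi_{\la,j}|^2
-\la e^{-\textrm i\theta_\la}\sum_{j=1}^n u_{\la,j}|\psi_{\la,j}|^2
-\textrm i\nu_\la\|\bm\psi_\la\|_2^2=0.
\]
The key observation is that, since $D$ is real and symmetric by $(A_1)$, the quadratic form $\langle\bm\psi_\la, D\bm\psi_\la\rangle$ is \emph{real}, and the two diagonal sums are manifestly real as well. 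Hence the diffusion term drops out of the imaginary part entirely, and only the factor $e^{-\textrm i\theta_\la}$ and the term $-\textrm i\nu_\la\|\bm\psi_\la\|_2^2$ survive there.

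Using $e^{-\textrm i\theta_\la}=\cos\theta_\la-\textrm i\sin\theta_\la$ and taking imaginary parts, I obtain
\[
\la\sin\theta_\la\sum_{j=1}^n u_{\la,j}|\psi_{\la,j}|^2=\nu_\la\|\bm\psi_\la\|_2^2,
\qquad\text{so}\qquad
\f{\nu_\la}{\la}=\sin\theta_\la\cdot\f{\sum_{j=1}^n u_{\la,j}|\psi_{\la,j}|^2}{\sum_{j=1}^n|\psi_{\la,j}|^2}.
\]
This identity is invariant under scaling of $\bm\psi_\la$, so no normalization is needed. Since $|\sin\theta_\la|\le1$ and $u_{\la,j}>0$, the ratio on the right is a convex combination of the $u_{\la,j}$ weighted by $|\psi_{\la,j}|^2$, and is therefore bounded above by $\max_{j}u_{\la,j}$; thus $\left|\nu_\la/\la\right|\le\max_{j=1,\dots,n}u_{\la,j}$.

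It remains only to control the right-hand side uniformly in $\la$. For this I would invoke Lemma \ref{lm2.3}$(i)$: since $\delta>0$ in the present case, the positive equilibrium $\bm u_\la$ is continuously differentiable (in particular continuous) on $[0,\infty)$, so each $u_{\la,j}$ is continuous on the compact interval $[0,\la_1]$ and hence bounded there. Consequently $\max_{j}u_{\la,j}$ is bounded on $(0,\la_1]$, and $\left|\nu_\la/\la\right|$ is bounded as claimed. The entire argument rests on the inner-product trick combined with the symmetry of $D$, which forces the diffusion term to be real and thus to disappear from the imaginary part; everything else is routine, and I do not anticipate a genuine obstacle.
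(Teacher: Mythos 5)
Your proof is correct and follows essentially the same route as the paper: pair \eqref{eigen} with $\bm\psi_\la$, use the symmetry of $D$ to see that the diffusion term is real so it drops out of the imaginary part, and conclude $|\nu_\la/\la|\le\max_j u_{\la,j}$, which is bounded on $(0,\la_1]$ by the continuity of $\bm u_\la$ from Lemma \ref{lm2.3}. The only cosmetic difference is that the paper verifies the realness of $\langle\bm\psi_\la,D\bm\psi_\la\rangle$ by an explicit symmetrization (also recording that it is nonpositive, which is not needed here), whereas you simply cite that a real symmetric matrix gives a real Hermitian form.
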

\begin{proof}
Substituting $(\nu_\la,\theta_\la,\bm{\psi}_\la )$ into \eqref{eigen} and multiplying it by $\left(\overline {\psi}_{\la,1},\dots,\overline
{\psi}_{\la,n}\right)$, we have
\begin{equation*}
\left(\overline {\psi}_{\la,1},\dots,\overline
{\psi}_{\la,n}\right)\left[D{\bm \psi}_\la +\la \textrm{diag}\left(m_j-u_{\la,j}\right){\bm \psi}_\la-\la e^{-\textrm i\theta_\la}\textrm{diag}( u_{\la,j}){\bm \psi}_\la-\textrm{i}\nu_\la\bm{\psi}_\la\right]=0,
\end{equation*}
and consequently,
\begin{equation*}
\begin{split}
&\sum_{j=1}^n\sum_{k\ne j}d_{jk}\left(\overline {\psi}_{\la,j}\psi_{\la,k}-\left|\psi_{\la,j}\right|^2\right)
+\la\sum_{j=1}^n\left(m_j-u_{\la,j}\right)\left|\psi_{\la,j}\right|^2\\
&-\la e^{-\textrm{i}\theta_\la }\sum_{j=1}^nu_{\la,j}\left|\psi_{\la,j}\right|^2-\textrm{i}\nu_\la\sum_{j=1}^n\left|\psi_{\la,j}\right|^2=0.
\end{split}
\end{equation*}
Since $\left(d_{jk}\right)_{n\times n}$ is symmetric, it follows that
\begin{equation*}
\begin{split}
\sum_{j=1}^n\sum_{k\ne j}d_{jk}\left(\overline {\psi}_{\la,j}\psi_{\la,k}-\left|\psi_{\la,j}\right|^2\right)
=&\sum_{j=1}^n\sum_{k\ne j}d_{jk}\left( \psi_{\la,j}\overline{\psi}_{\la,k}-\left|\psi_{\la,k}\right|^2\right)\\
=&\displaystyle\frac{1}{2}\sum_{j=1}^n\sum_{k\ne j}d_{jk}\left[-\left|\psi_{\la,j}\right|^2+  \left(\psi_{\la,k}\overline{\psi}_{\la,j}+\psi_{\la,j}\overline {\psi}_{\la,k}\right)-\left|\psi_{\la,k}\right|^2\right]\\
\le&-\displaystyle\frac{1}{2}\sum_{j=1}^n\sum_{k\ne j}d_{jk}\left(\left|\psi_{\la,j}\right|-\left|\psi_{\la,k}\right|\right)^2\le0.
\end{split}
\end{equation*}
Then
\begin{equation*}
\nu_\la\sum_{j=1}^n\left|\psi_{\la,j}\right|^2=\la \sin\theta_\la \sum_{j=1}^nu_{\la,j}\left|\psi_{\la,j}\right|^2,
\end{equation*}
which implies that
\begin{equation*}
\left|\ds
\frac{\nu_\la}{\la}\right|\le\max_{\la\in[0,\la_1]}\|\bm u_\la\|_\infty \;\;\text{for any}\;\;\la\in(0,\la_1),
\end{equation*}
where $\|\cdot\|_\infty$ is defined as in \eqref{norm}. This completes the proof.
\end{proof}

The following result is similar to Lemma 2.3 of \cite{Busenberg} and
we omit the proof here.
\begin{lemma}\label{lem21}
Assume that $\bm z\in (X_1)_{\mathbb C}$, where $X_1$ is defined as in \eqref{hX1}. Then $|
\langle\bm z, D\bm z\rangle|\ge
\gamma_2\|\bm z\|_2^2$, where $\|\cdot \|_2$ is defined as in \eqref{norm}, and $-\gamma_2(<0)$ is the second
largest eigenvalue of matrix $D$.
\end{lemma}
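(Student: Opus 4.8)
The statement is a Rayleigh-quotient estimate on the spectral subspace $X_1$, so the plan is to diagonalize $D$ and read off the bound. By assumption $(A_1)$ the matrix $D$ is real symmetric, hence Hermitian on $\mathbb C^n$, with real eigenvalues and an orthonormal eigenbasis; by the Perron--Frobenius discussion preceding Lemma~\ref{lm2.1}, its largest eigenvalue is $s(D)=0$, which is simple with positive eigenvector $\bm\eta=(1/n,\dots,1/n)^T\gg\bm0$, and I denote the remaining eigenvalues by $-\g_2\ge\dots$ with $\g_2>0$ (the second largest eigenvalue being $-\g_2$). The first step is simply to record this spectral picture.

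The key observation is that $X_1$ is exactly the orthogonal complement of $\bm\eta$. Indeed, since $\bm\eta$ is a positive multiple of $(1,\dots,1)^T$, the defining condition $\sum_{i=1}^n x_i=0$ of \eqref{hX1} is equivalent to $\langle\bm\eta,\bm x\rangle=0$. Because $\bm\eta$ is real, passing to the complexification gives
\begin{equation*}
(X_1)_{\mathbb C}=\{\bm z\in\mathbb C^n:\langle\bm\eta,\bm z\rangle=0\},
\end{equation*}
that is, $(X_1)_{\mathbb C}$ is precisely the orthogonal complement in $\mathbb C^n$ of the eigenspace of the eigenvalue $0$. Since $D$ is Hermitian, this complement is $D$-invariant and coincides with the span of the eigenvectors associated with the eigenvalues $-\g_2,\dots$, all of which are $\le-\g_2<0$.

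With this in hand the estimate is immediate from the min--max (Rayleigh) characterization. For any $\bm z\in(X_1)_{\mathbb C}$ the quantity $\langle\bm z,D\bm z\rangle=\bm z^{*}D\bm z$ is real, and restricting the Rayleigh quotient of the Hermitian matrix $D$ to the invariant subspace $(X_1)_{\mathbb C}$ yields
\begin{equation*}
\langle\bm z,D\bm z\rangle\le -\g_2\,\|\bm z\|_2^2<0\qquad\text{for }\bm z\neq\bm0.
\end{equation*}
In particular $\langle\bm z,D\bm z\rangle$ is negative, so $|\langle\bm z,D\bm z\rangle|=-\langle\bm z,D\bm z\rangle\ge\g_2\|\bm z\|_2^2$, which is the claim. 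If one prefers to avoid invoking the complex Rayleigh bound directly, I would instead write $\bm z=\bm x+\mathrm i\bm y$ with $\bm x,\bm y\in X_1$, use the symmetry of $D$ to check that the cross terms cancel so that $\langle\bm z,D\bm z\rangle=\bm x^TD\bm x+\bm y^TD\bm y$, and apply the real Rayleigh quotient bound $\bm x^TD\bm x\le-\g_2\|\bm x\|_2^2$ on $X_1$ to each summand.

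There is no serious obstacle here; the only point requiring a little care is the complexification, namely verifying that $(X_1)_{\mathbb C}$ is genuinely the orthogonal complement of the $0$-eigenspace and that $\langle\bm z,D\bm z\rangle$ stays real on it, after which the bound is just the spectral theorem for the symmetric matrix $D$. This is exactly why the authors can state it as analogous to Lemma~2.3 of \cite{Busenberg} and omit the details.
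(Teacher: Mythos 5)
Your argument is correct and is precisely the standard spectral argument the authors have in mind when they omit the proof by reference to Lemma~2.3 of \cite{Busenberg}: $D$ is real symmetric with simple top eigenvalue $s(D)=0$ and eigenvector $\bm\eta$, $(X_1)_{\mathbb C}$ is the orthogonal complement of that eigenspace, and the Rayleigh quotient there is bounded above by $-\g_2$, so $\langle\bm z,D\bm z\rangle\le-\g_2\|\bm z\|_2^2\le 0$ and the stated inequality follows. Nothing further is needed.
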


%\end{proof}
For $\la\in(0,\la_1]$, ignoring a scalar factor, $\bm \psi$ in Eq. \eqref{eigen} can be represented as
\begin{equation}
\label{eigen2}
\begin{split}
&\bm \psi= \beta\bm \eta+\la\bm  z,\;\;\; \bm z\in (X_1)_{\mathbb{C}},\; \; \beta\geq0, \\
 &\|\bm \psi\|^2_{2}=\beta^2 \|\bm \eta\|^2_2
 +\la^2\|\bm z\|^2_{2}=  \|\bm \eta\|^2_2,
 \end{split}
 \end{equation}
 where $\bm \eta$ and $X_1$ are defined as in \eqref{alp1} and \eqref{hX1}, respectively.
 Clearly, \begin{equation}\label{chX1}
(X_1)_{\mathbb C}=\left\{(x_1,\dots, x_n)^T\in\mathbb {C}^n:\sum_{i=1}^n x_i=0\right\}.
\end{equation}
Then, substituting the first equation of \eqref{eigen2} and $\nu=\la h$
into Eq. \eqref{eigen}, we see that $(\nu,\theta,\bm \psi)$ solves \eqref{eigen}, where $\nu>0$, $\theta\in[0,2\pi)$ and $\bm \psi\in \mathbb C^n(\|\bm \psi\|^2_{2}= \|\bm \eta\|_2^2)$,
if and only if the following system:
\begin{equation}\label{g1}
\begin{cases}\bm g_1(\bm z,\beta,h,\theta,\la):=D\bm{z}+ \textrm{diag}\left(m_j-u_{\la,j}\right)(\beta{\bm \eta}+\la\bm z)\\
~~~~~~~~~~~~~~~~~~~~~~~- e^{-\textrm i\theta}\textrm{diag}( u_{\la,j})(\beta{\bm \eta}+\la\bm z)-\textrm{i}h(\beta{\bm \eta}+\la\bm z)=\bm 0,\\
 g_2(\bm z,\beta,\la):=(\beta^2-1)\|\bm \eta\|^2_2+\la^2\|\bm z\|^2_{2}=0
\end{cases}
\end{equation}
has a solution $(\bm z,\beta,h,\theta)$, where $\bm z\in (X_1)_{\mathbb{C}}$, $\beta\ge0$, $h>0$ and $\theta\in[0,2\pi)$.
Define
$\bm G:(X_1)_{\mathbb C}\times \mathbb{R}^4\to
\mathbb C^n\times \mathbb{R}$ by $G=(\bm g_1,g_2)^T$. We firstly show that
$\bm G(\bm z,\beta,h,\theta,\la )=0$ has a unique solution for $\la=0$.
\begin{lemma}\label{l25}
The following equation \begin{equation}\label{3.5G}
\begin{cases}
\bm G(\bm z,\beta,h,\theta,0)=\bm 0\\
\bm z\in (X_1)_{\mathbb{C}},\;h\ge0,\;\beta\ge0,\; \theta\in[0,2\pi]\\
\end{cases}
\end{equation} has a unique solution $(\bm z_{0},\beta_{0},h_{0},\theta_{0})$, where
\begin{equation}\label{lastar}
    \beta_{0}=1,\;\;\theta_{0}=\pi/2,\;\;h_{0}=\displaystyle\frac{\delta}{n},
\end{equation}
and $\bm z_{0}=(z_{0,1},\dots,z_{0,n})^T\in(X_1)_{\mathbb C}$ is the unique solution of
\begin{equation}\label{z0}
\sum_{k=1}^n d_{jk} z_k+\left(m_j-\displaystyle\frac{\delta}{n}\right)\displaystyle\frac{1}{n}=0, \;\;j=1,\dots,n.
\end{equation}
\end{lemma}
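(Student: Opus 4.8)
The plan is to set $\la=0$ and solve the two components of $\bm G=(\bm g_1,g_2)^T$ in \eqref{3.5G} in turn, exploiting the splitting $\mathbb R^n=\mathrm{span}\{\bm\eta\}\oplus X_1$ together with the spectral structure of $D$: by symmetry and the Perron--Frobenius statement recorded above, $D\bm\eta=\bm 0$, the kernel of $D$ is exactly $\mathrm{span}\{\bm\eta\}$, and the range of $D$ is exactly $(X_1)_{\mathbb C}$, so that $D$ restricts to a bijection of $(X_1)_{\mathbb C}$ onto itself. First I would dispose of $\beta$: at $\la=0$ the second equation reads $g_2(\bm z,\beta,0)=(\beta^2-1)\|\bm\eta\|_2^2=0$, and since $\|\bm\eta\|_2\neq0$ and $\beta\ge0$ this forces $\beta_0=1$ before $\bm g_1$ is even touched.

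Next I would extract the scalar solvability condition hidden in $\bm g_1(\bm z,1,h,\theta,0)=\bm 0$. Writing out $\bm g_1$ at $\la=0$ with $u_{0,j}=\delta/n$ (from \eqref{lims1}) and $\eta_j=1/n$ (from \eqref{alp1}), the unknown $\bm z$ enters only through the term $D\bm z$. Pairing the equation $\bm g_1=\bm 0$ with $\bm\eta$ (equivalently, summing its $n$ components and using that $D$ has zero column sums, so $\langle\bm\eta,D\bm z\rangle=\langle D\bm\eta,\bm z\rangle=0$) annihilates the $D\bm z$ contribution and collapses everything to the single complex constraint
$$\frac{\delta}{n}\,e^{-\mathrm i\theta}+\mathrm i h=0.$$
Separating real and imaginary parts and using $\delta>0$ gives $\cos\theta=0$ and $h=\frac{\delta}{n}\sin\theta$; the sign requirement $h\ge0$ then excludes $\theta=3\pi/2$ and pins down $\theta_0=\pi/2$ and $h_0=\delta/n$ exactly as in \eqref{lastar}. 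This is the step that produces the bifurcation data, and the place where one must argue uniqueness rather than mere existence.

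Finally, with $\beta_0,\theta_0,h_0$ fixed, I would substitute $e^{-\mathrm i\theta_0}=-\mathrm i$ and $h_0=\delta/n$ back into $\bm g_1=\bm 0$; the two purely imaginary terms cancel and what survives is precisely the real linear system \eqref{z0} for $\bm z\in(X_1)_{\mathbb C}$. Its right-hand side has components summing to zero (this is just the constraint already enforced in the previous paragraph), hence lies in $(X_1)_{\mathbb C}$; since $D$ restricts to a bijection of $(X_1)_{\mathbb C}$, the system has a unique solution $\bm z_0\in(X_1)_{\mathbb C}$, which is in fact real. Assembling $(\bm z_0,\beta_0,h_0,\theta_0)$ yields the claimed unique solution of \eqref{3.5G}.

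The main obstacle is really the middle step: recognizing that pairing with the kernel direction $\bm\eta$ converts the $n$-dimensional vector equation into one scalar complex constraint, and then leveraging the positivity $\delta>0$ together with $h\ge0$ to single out a unique pair $(\theta,h)$. The remaining ingredient, invertibility of $D$ on $(X_1)_{\mathbb C}$ and the characterization \eqref{chX1} of that subspace, is standard from the spectral facts for the symmetric, irreducible, quasi-positive matrix $D$ already established before Lemma~\ref{lm2.1}.
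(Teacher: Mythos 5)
Your proposal is correct and follows essentially the same route as the paper: determine $\beta_0=1$ from $g_2$, reduce $\bm g_1=\bm 0$ to the scalar solvability condition $\frac{\delta}{n}e^{-\mathrm i\theta}+\mathrm ih=0$ (the paper obtains this by summing the $n$ components, which is your pairing with $\bm\eta$), and then solve uniquely for $\bm z_0$ using the invertibility of $D$ on $(X_1)_{\mathbb C}$. The only cosmetic difference is that you make explicit the sign analysis ($\delta>0$, $h\ge0$) excluding $\theta=3\pi/2$, which the paper leaves implicit.
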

\begin{proof}
Clearly, $g_2(\bm z,\beta,0)=0$ if and only if $\beta=\beta_{0}=1$. Noticing that $$\bm u_0=\left(\displaystyle\frac{\delta}{n},\dots, \displaystyle\frac{\delta}{n}\right),$$
where $\delta$ is defined as in Eq. \eqref{del},
and substituting $\beta=\beta_0$ into
$\bm g_1(\bm z,\beta,h,\theta,0)=\bm 0$, we see that
\begin{equation}\label{subequi}
\sum_{k=1}^n d_{jk} z_k+\left(m_j-\displaystyle\frac{\delta}{n}\right)\displaystyle\frac{1}{n}-\displaystyle\frac{\delta}{n^2}e^{-\textrm{i}\theta}-\displaystyle\frac{\textrm {i}h}{n}=0, \;\;j=1,\dots,n.
\end{equation}
Then Eq. \eqref{subequi} has a solution $(\bm z,h,\theta)$, where $\bm z\in(X_1)_{\mathbb C}$, $h\ge0$, $\theta\in[0,2\pi]$, iff
\begin{equation*}
\sum_{k=1}^n\left[\left(m_k-\displaystyle\frac{\delta}{n}\right)\displaystyle\frac{1}{n}-\displaystyle\frac{\delta}{n^2}e^{-\textrm{i}\theta}-\displaystyle\frac{\textrm {i}h}{n}\right]=-\displaystyle\frac{\delta}{n}e^{-\textrm{i}\theta}-\textrm{i}h=0.
\end{equation*}
This yields
\begin{equation*}
\theta=\theta_{0}=\pi/2,\;\;h=h_{0}=\displaystyle\frac{\delta}{n}.
\end{equation*}
Substituting $h=h_0$ and $\theta=\theta_0$ into Eq. \eqref{subequi}, we see that $\bm z=\bm z_0$.
\end{proof}

Then, we show that
$\bm G(\bm z,\beta,h,\theta,\la)=0$  has a unique solution $(\bm z,\beta,h,\theta)$ of $\bm z\in (X_1)_{\mathbb{C}}$, $\beta\ge0$, $h>0$ and $\theta\in[0,2\pi)$ for small $\la$.
\begin{theorem}\label{cha}
There exist $\la_2>0$ and a continuously differentiable mapping
$\la\mapsto(\bm z_\la,\beta_\la,h_\la,\theta_\la)$ from
$[0,\la_2]$ to $(X_1)_{\mathbb C}\times \mathbb{R}^3$ such that
\begin{equation}\label{3.6G} \begin{cases}
\bm G(\bm z,\beta,h,\theta,\la)=\bm 0,\\
\bm z\in (X_1)_{\mathbb{C}},\;h>0,\;\beta\ge0, \;\theta\in[0,2\pi)\\
\end{cases}
\end{equation}
has a unique solution $(\bm z_\la,\beta_\la,h_\la,\theta_\la)$
for $\la\in[0,\la_2]$.
\end{theorem}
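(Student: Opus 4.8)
The plan is to prove Theorem~\ref{cha} by the implicit function theorem, linearizing about the unique solution $(\bm z_0,\beta_0,h_0,\theta_0)$ of the $\lambda=0$ problem supplied by Lemma~\ref{l25}. Regarding $\bm G$ as a $C^1$ map of the state variables $(\bm z,\beta,h,\theta)\in (X_1)_{\mathbb C}\times\mathbb R^3$ depending on the parameter $\lambda$, and viewing the complex space $(X_1)_{\mathbb C}$ as a real vector space, both the domain and the target $\mathbb C^n\times\mathbb R$ have real dimension $2n+1$. Hence it suffices to show that the partial Fréchet derivative $D_{(\bm z,\beta,h,\theta)}\bm G$ at $(\bm z_0,\beta_0,h_0,\theta_0,0)$ is injective: injectivity then upgrades to bijectivity by equality of dimensions, and the implicit function theorem yields a continuously differentiable branch $\lambda\mapsto(\bm z_\lambda,\beta_\lambda,h_\lambda,\theta_\lambda)$ with the prescribed initial value, unique in a neighborhood of the base point for small $\lambda$.

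First I would compute this derivative. Since every occurrence of $\bm z$ in $\bm g_1$ other than the leading term $D\bm z$ carries a factor $\lambda$, the $\bm z$-direction of the derivative is simply $\bm v\mapsto D\bm v$ at $\lambda=0$; differentiating $g_2$ in $\beta$ gives $2\beta_0\|\bm\eta\|_2^2=2\|\bm\eta\|_2^2\ne0$, while its $\bm z$-, $h$- and $\theta$-derivatives vanish at $\lambda=0$. To test injectivity, suppose a tangent vector $(\bm v,b,\tilde h,\tilde\theta)$ lies in the kernel. The scalar ($g_2$) equation forces $b=0$ at once. Substituting $b=0$, $\beta_0=1$, $\theta_0=\pi/2$ and $h_0=\delta/n$ into the first equation collapses it, after collecting the $h$- and $\theta$-contributions (both proportional to $\bm\eta$), to $D\bm v+\big((\delta/n)\tilde\theta-\mathrm i\tilde h\big)\bm\eta=\bm 0$. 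Since $D$ is symmetric with $D\bm\eta=\bm 0$, the decomposition $\mathbb C^n=\mathrm{span}\{\bm\eta\}\oplus(X_1)_{\mathbb C}$ is $D$-invariant and $D$ is invertible on $(X_1)_{\mathbb C}$; projecting onto the two summands gives $(\delta/n)\tilde\theta-\mathrm i\tilde h=0$ and $D\bm v=\bm 0$. As $\delta>0$, separating real and imaginary parts yields $\tilde\theta=\tilde h=0$, and invertibility of $D$ on $(X_1)_{\mathbb C}$ gives $\bm v=\bm 0$. Thus the kernel is trivial and the implicit function theorem applies. This injectivity check, in which the sign condition $\delta>0$ and the spectral structure of $D$ are both essential, is the main obstacle.

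It remains to promote the local uniqueness from the implicit function theorem to the global uniqueness asserted for every $\lambda\in[0,\lambda_2]$ under the constraints $h>0$, $\beta\ge0$, $\theta\in[0,2\pi)$. For this I would establish a priori bounds on all constrained solutions, uniform for small $\lambda$: the equation $g_2=0$ gives $\beta^2=1-\lambda^2\|\bm z\|_2^2/\|\bm\eta\|_2^2\in[0,1]$, Lemma~\ref{nu} bounds $h=\nu/\lambda$, and $\theta$ already ranges over a compact interval; pairing $\bm g_1=\bm 0$ with $\bm z$ and invoking $|\langle\bm z,D\bm z\rangle|\ge\gamma_2\|\bm z\|_2^2$ from Lemma~\ref{lem21}, the terms carrying $\beta\bm\eta$ are controlled linearly in $\|\bm z\|_2$ while those carrying $\lambda\bm z$ are $\mathcal O(\lambda)\|\bm z\|_2^2$, so that $\gamma_2\|\bm z\|_2^2\le C\|\bm z\|_2+\mathcal O(\lambda)\|\bm z\|_2^2$ gives a uniform bound on $\|\bm z\|_2$ once $\lambda$ is small. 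With all state variables confined to a fixed compact set, a contradiction argument finishes the proof: if some sequence $\lambda_k\downarrow0$ carried constrained solutions off the branch, a subsequence would converge to a solution of the $\lambda=0$ system, which by Lemma~\ref{l25} must equal $(\bm z_0,\beta_0,h_0,\theta_0)$, forcing those solutions into the neighborhood where the implicit function theorem already guarantees uniqueness, a contradiction. Choosing $\lambda_2$ small enough that both the implicit function theorem and the above estimates hold then delivers the unique solution for all $\lambda\in[0,\lambda_2]$.
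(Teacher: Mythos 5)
Your proposal is correct and follows essentially the same route as the paper: linearize at the $\lambda=0$ solution from Lemma~\ref{l25}, apply the implicit function theorem, and then upgrade local to global uniqueness via the a priori bounds from Lemmas~\ref{nu} and~\ref{lem21} together with a compactness/contradiction argument sending $\lambda\to0$. The only difference is that you actually verify bijectivity of the linearization (kernel computation using $\delta>0$ and the invertibility of $D$ on $(X_1)_{\mathbb C}$, plus a real-dimension count), a step the paper merely asserts.
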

\begin{proof}
Denote the Fr\'echet derivative of $\bm G$ with respect to
$(\bm z,\beta,h,\theta)$ at $(\bm z_{0},\beta_{0},h_{0},\theta_{0},0)$ by $T=(T_1,T_2)^T:(X_1)_{\mathbb C}\times \mathbb{R}^3\mapsto
\mathbb C^n\times \mathbb{R}$.
Then we calculate
\begin{equation*}
\begin{split}
&T_1(\bm \chi,\kappa,\epsilon,\vartheta)=
\left(\begin{array}{c}
\sum_{k=1}^nd_{1k}\chi_k+\left(m_1-\displaystyle\frac{\delta}{n}\right)\displaystyle\frac{\kappa}{n}-\displaystyle\frac{\textrm{i}\epsilon}{n}+\displaystyle\frac{\delta}{n^2}\vartheta
\\
\sum_{k=1}^nd_{2k}\chi_k+\left(m_2-\displaystyle\frac{\delta}{n}\right)\displaystyle\frac{\kappa}{n}-\displaystyle\frac{\textrm{i}\epsilon}{n}+\displaystyle\frac{\delta}{n^2}\vartheta \\
\vdots\\
\sum_{k=1}^nd_{nk}\chi_k+\left(m_n-\displaystyle\frac{\delta}{n}\right)\displaystyle\frac{\kappa}{n}-\displaystyle\frac{\textrm{i}\epsilon}{n}+\displaystyle\frac{\delta}{n^2}\vartheta \\
\end{array}\right),\\
 &T_2(\kappa)=\displaystyle\frac{2\kappa}{n},
\end{split}
\end{equation*}
where $\bm \chi=(\chi_1,\dots,\chi_n)^T\in(X_1)_{\mathbb C}$,
and $T$ is
a bijection from $(X_1)_{\mathbb C}\times \mathbb{R}^3$ to $\mathbb
C^n\times \mathbb{R}$. It follows from the implicit function theorem that
there exist $\la_2>0$ and a continuously differentiable mapping
$\la \mapsto(\bm z_\la,\beta_\la,h_\la,\theta_\la)$ from
$[0,\la_2]$ to $(X_1)_{\mathbb C}\times \mathbb{R}^3$ such that $(\bm z_\la,\beta_\la,h_\la,\theta_\la)$ satisfies
\eqref{3.6G}.

Then we show the uniqueness of the solution of \eqref{3.6G}. Actually, we only need to prove that if $(\bm z^\la,\beta^\la,h^\la,\theta^\la)$ satisfies \eqref{3.6G},
then
$(\bm z^\la,\beta^\la,h^\la,\theta^\la)\rightarrow(\bm z_0,\beta_0,h_0,\theta_0)=\left(\bm z_0,1,\displaystyle\frac{\delta}{n},\displaystyle\frac{\pi}{2}\right)$
as $\la\rightarrow 0$. It follows from Lemma \ref{nu} and \eqref{g1} that $\{h^\la\}, \{\beta^\la\}$ and $\{\theta^\la\}$ are
bounded for $\la\in(0,\la_2]$. Then from the first equation of \eqref{g1}, we see that
\begin{equation*}
\begin{split}
\langle \bm z^\la,D\bm z^\la\rangle =&-\sum_{j=1}^n\left(m_j-u_{\la,j}-e^{-{\textrm i}\theta^\la}u_{\la,j}-ih^\la\right)\displaystyle\frac{\beta^\la
\eta_j\overline {z^\la_j}}{n}\\&-\la\sum_{j=1}^n\left(m_j-u_{\la,j}-e^{-{\textrm i}\theta^\la}u_{\la,j}-ih^\la\right)|z^\la_j|^2.
\end{split}
\end{equation*}
This, combined with Lemma \ref{lem21}, implies that
there exist positive constants $M_1$ and $M_2$ such that
\begin{equation*}
\gamma_2\|\bm z^\la\|^2_{2}\leq |
\langle \bm z^\la,D\bm z^\la \rangle|\le
M_1\|\bm z^\la\|_2+
M_2\la\|\bm z^\la \|^2_{2}
\;\;\text{for}\;\; \la\in(0,\la_2],
\end{equation*} where $\gamma_2$ is defined as in Lemma \ref{lem21}.
Therefore, for sufficiently small $\la_2$, $\{\bm z^\la\}$ is bounded in
$\mathbb C^n$ for $\la\in[0,\la_2]$. Then, for any
 subsequence $\{\la_n\}_{n=1}^\infty$ satisfying $\ds\lim_{n\to\infty}\la_n=0$, there exists a subsequence
 $\{\la_{n_k}\}_{k=1}^\infty$ such that
$$
(\bm z^{\la_{n_k}},\beta^{\la_{n_k}},h^{\la_{n_k}},\theta^{\la_{n_k}})
 \to(\bm z_*, \beta_*,h_*,\theta_*)\;\;\text{in}\;\;\mathbb {C}^n\times \mathbb{R}^3\;\;\text{as}\;\;k\to\infty.$$
Taking the limit of the equation
$$\bm g_1(\bm z^{\la_{n_k}},\beta^{\la_{n_k}},h^{\la_{n_k}},\theta^{\la_{n_k}},\la_{n_k})=0$$
as $k\rightarrow\infty$, we see that
$(\bm z_*, \beta_*,h_*,\theta_*)$ satisfies \eqref{3.5G}, which yields
$$(\bm z_*, \beta_*,h_*,\theta_*)=(\bm z_{0},\beta_{0},h_{0},\theta_{0}).$$ This completes the proof.
\end{proof}

Then from Theorem \ref{cha}, we obtain that:
\begin{theorem}\label{c25}
Let $\Delta(\la,\mu,\tau)$ be defined as in \eqref{triangle}.
Then for $\la\in(0,\la_2],$ $(\nu,\tau,\bm \psi)$ solves
\begin{equation*}
\begin{cases}
\Delta(\la,{\rm i}\nu,\tau)\bm \psi=\bm 0,\\
\nu>0,\;\tau\ge0,\;\bm \psi (\ne\bm 0) \in \mathbb C^n,\\
\end{cases}
\end{equation*}
if and
only if
\begin{equation}\label{par}
\nu=\nu_\la=\la h_\la,\;\bm \psi= a{\bm \psi}_\la,\;
\tau=\tau_{l}=\frac{\theta_\la+2l\pi}{\nu_\la},\;\; l=0,1,2,\cdots,
\end{equation}
where $\bm\psi_\la=\beta_\la\bm \eta+\la \bm z_\la$,
$a$ is a nonzero constant, and
$(\bm z_\la,\beta_\la,h_\la,\theta_\la)$ is defined as in Theorem
\ref{cha}.
\end{theorem}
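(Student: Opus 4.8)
The plan is to read off Theorem~\ref{c25} directly from Theorem~\ref{cha} by carefully tracking the correspondence between the phase variable $\theta$ in the reduced system \eqref{g1} and the delay $\tau$. The starting observation is purely algebraic: for $\mu=\textrm i\nu$ with $\nu>0$, the defining relation $\Delta(\la,\textrm i\nu,\tau)\bm\psi=\bm 0$ of \eqref{triangle} is literally \eqref{eigen} with $\theta$ replaced by $\nu\tau$, since $e^{-\textrm i\nu\tau}$ enters exactly where $e^{-\textrm i\theta}$ did. Thus solving $\Delta(\la,\textrm i\nu,\tau)\bm\psi=\bm 0$ with $\nu>0$, $\tau\ge 0$, $\bm\psi\ne\bm 0$ is equivalent to solving \eqref{eigen} together with the arithmetic constraint $\theta\equiv\nu\tau\pmod{2\pi}$, where $\theta\in[0,2\pi)$.

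Next I would invoke the normalization \eqref{eigen2}: any nonzero $\bm\psi$ may, after scaling by a nonzero constant $a$, be written as $\bm\psi=\beta\bm\eta+\la\bm z$ with $\bm z\in(X_1)_{\mathbb C}$, $\beta\ge 0$ and $\|\bm\psi\|_2=\|\bm\eta\|_2$; together with $\nu=\la h$ this turns \eqref{eigen} into precisely the system $\bm G(\bm z,\beta,h,\theta,\la)=\bm 0$ of \eqref{g1}. Theorem~\ref{cha} then supplies, for each $\la\in(0,\la_2]$, a unique solution $(\bm z_\la,\beta_\la,h_\la,\theta_\la)$ with $h_\la>0$, so $\nu_\la=\la h_\la>0$. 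Hence the only candidates for $(\nu,\bm\psi)$ are $\nu=\nu_\la$ and $\bm\psi=a\bm\psi_\la$ with $\bm\psi_\la=\beta_\la\bm\eta+\la\bm z_\la$, and the only admissible phase is $\theta=\theta_\la$.

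The family of delays is then extracted from the constraint $\theta_\la\equiv\nu_\la\tau\pmod{2\pi}$: with $\tau\ge 0$, $\nu_\la>0$ and $\theta_\la\in[0,2\pi)$, this forces $\nu_\la\tau=\theta_\la+2l\pi$ for some non-negative integer $l$, i.e. $\tau=\tau_l=(\theta_\la+2l\pi)/\nu_\la$, which gives the \lq\lq only if\rq\rq~direction. For the converse I would simply substitute: since $(\bm z_\la,\beta_\la,h_\la,\theta_\la)$ solves $\bm G=\bm 0$, the vector $\bm\psi_\la$ satisfies \eqref{eigen} at $\nu=\nu_\la$, $\theta=\theta_\la$; and because $\nu_\la\tau_l=\theta_\la+2l\pi$ yields $e^{-\textrm i\nu_\la\tau_l}=e^{-\textrm i\theta_\la}$, linearity gives $\Delta(\la,\textrm i\nu_\la,\tau_l)(a\bm\psi_\la)=\bm 0$ for every nonzero $a$.

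This argument involves no hard analysis: all of the genuine work, namely the implicit-function-theorem reduction and the uniqueness, is already contained in Theorem~\ref{cha}. The only point demanding care, and the only place an error could slip in, is the bookkeeping of the $\theta\leftrightarrow\nu\tau$ correspondence. One must confirm that reducing $\theta$ to the fundamental interval $[0,2\pi)$ is consistent with the normalization used in Theorem~\ref{cha}, and, crucially, that the \emph{discreteness} of the delays in $l$ (rather than a continuum of admissible $\tau$) arises exactly from the $2\pi$-periodicity of $e^{-\textrm i\nu\tau}$ combined with the uniqueness of $\theta_\la$.
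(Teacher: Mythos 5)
Your proposal is correct and follows essentially the same route as the paper, which states Theorem \ref{c25} as an immediate consequence of Theorem \ref{cha} without a separate written proof: the equivalence between $\Delta(\la,{\rm i}\nu,\tau)\bm\psi=\bm 0$ and the system \eqref{g1} via the substitutions $\theta=\nu\tau \pmod{2\pi}$, $\nu=\la h$, and the normalization \eqref{eigen2} is exactly the reduction the paper sets up before Lemma \ref{l25}, and the discrete family $\tau_l=(\theta_\la+2l\pi)/\nu_\la$ is read off from the $2\pi$-periodicity together with the uniqueness in Theorem \ref{cha}, just as you describe.
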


Now, we show that $\textrm i\nu_\la$ is simple.
\begin{theorem}\label{thm34a}
Let $A_{\tau_{l}}(\la)$ be defined as in \eqref{Ataula}, and assume that
$\la\in(0,\la_2]$, where $\la_2$ is sufficiently small. Then $\rm{i}\nu_\la $ is a simple
eigenvalue of $A_{\tau_{l}}(\la)$ for $l=0,1,2,\dots$, where $\rm i\nu_\la$ and $\tau_l$ are defined as in Theorem \ref{c25}.
\end{theorem}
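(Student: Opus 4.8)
The plan is to establish the two ingredients of simplicity separately: that the geometric eigenspace of $A_{\tau_l}(\la)$ at $\mathrm i\nu_\la$ is one-dimensional, and that there is no generalized eigenvector, i.e.\ no Jordan chain of length two. For the first, recall that $\mu$ is an eigenvalue of $A_\tau(\la)$ with eigenfunction $\bm\Phi$ precisely when $\bm\Phi(\theta)=e^{\mu\theta}\bm\psi$ and $\Delta(\la,\mu,\tau)\bm\psi=\bm0$, so the geometric multiplicity of $\mathrm i\nu_\la$ equals $\dim\mathscr N(\Delta(\la,\mathrm i\nu_\la,\tau_l))$. Theorem~\ref{c25} shows that, for $\la\in(0,\la_2]$, every solution $\bm\psi$ of $\Delta(\la,\mathrm i\nu_\la,\tau_l)\bm\psi=\bm0$ is a scalar multiple of $\bm\psi_\la=\beta_\la\bm\eta+\la\bm z_\la$; hence this kernel is one-dimensional and the geometric multiplicity is $1$.

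For algebraic simplicity I would argue directly in the style of Busenberg--Huang. Suppose, for contradiction, that a generalized eigenvector exists, i.e.\ $\bm\Phi\in\mathscr D(A_{\tau_l}(\la))$ with $(A_{\tau_l}(\la)-\mathrm i\nu_\la)^2\bm\Phi=\bm0$ but $(A_{\tau_l}(\la)-\mathrm i\nu_\la)\bm\Phi\ne\bm0$; then $(A_{\tau_l}(\la)-\mathrm i\nu_\la)\bm\Phi$ is an eigenfunction, so by geometric simplicity it equals $a\,\bm\Psi_\la$ with $\bm\Psi_\la(\theta)=e^{\mathrm i\nu_\la\theta}\bm\psi_\la$ and $a\ne0$. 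Since $A_{\tau_l}(\la)\bm\Phi=\dot{\bm\Phi}$, solving the scalar-coefficient linear ODE $\dot{\bm\Phi}-\mathrm i\nu_\la\bm\Phi=a\,\bm\Psi_\la$ on $[-\tau_l,0]$ gives $\bm\Phi(\theta)=e^{\mathrm i\nu_\la\theta}\bigl(\bm b+a\theta\bm\psi_\la\bigr)$ for some $\bm b\in\mathbb C^n$. Imposing the domain boundary condition from \eqref{Ataula} at $\theta=0$ and using $e^{-\mathrm i\nu_\la\tau_l}=e^{-\mathrm i\theta_\la}$ (from \eqref{par}), a short rearrangement yields
\begin{equation*}
\Delta(\la,\mathrm i\nu_\la,\tau_l)\bm b=a\Bigl[\bm\psi_\la-\la\tau_l e^{-\mathrm i\theta_\la}\,\mathrm{diag}(u_{\la,j})\bm\psi_\la\Bigr].
\end{equation*}
Thus a Jordan chain exists if and only if the right-hand side lies in $\mathscr R(\Delta(\la,\mathrm i\nu_\la,\tau_l))$.

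The key observation is that, because $D$ is symmetric and the remaining terms of $\Delta$ are diagonal, $\Delta(\la,\mathrm i\nu_\la,\tau_l)$ is complex symmetric; consequently its left kernel with respect to the bilinear form $(\bm u,\bm v)\mapsto\bm u^T\bm v$ is again spanned by $\bm\psi_\la$, and $\mathscr R(\Delta)=\{\bm r\in\mathbb C^n:\bm\psi_\la^T\bm r=0\}$. Hence the solvability condition above fails---so $\mathrm i\nu_\la$ is simple---provided
\begin{equation*}
\Lambda_\la:=\sum_{j=1}^n\psi_{\la,j}^2-\la\tau_l e^{-\mathrm i\theta_\la}\sum_{j=1}^n u_{\la,j}\psi_{\la,j}^2\ne0.
\end{equation*}
Finally I would verify $\Lambda_\la\ne0$ for small $\la$ by a limiting argument rather than for fixed $\la$: using Theorem~\ref{cha} together with $\bm\psi_\la\to\bm\eta$, $\theta_\la\to\pi/2$, $h_\la\to\delta/n$ and $u_{\la,j}\to\delta/n$, and recalling $\la\tau_l=(\theta_\la+2l\pi)/h_\la$, one computes $\Lambda_\la\to\frac1n\bigl(1+\mathrm i(\tfrac\pi2+2l\pi)\bigr)$ as $\la\to0$, whose real part $1/n$ is nonzero. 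By continuity in $\la$ this forces $\Lambda_\la\ne0$ after shrinking $\la_2$, completing the proof for every $l=0,1,2,\dots$.

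The main obstacle is the middle step: correctly extracting the solvability condition for the generalized eigenvector. One must carry the delay term through the boundary condition accurately and, crucially, recognize that the complex symmetry of $\Delta$ makes the relevant Fredholm orthogonality a bilinear (transpose) pairing with $\bm\psi_\la$ itself rather than a Hermitian pairing with $\overline{\bm\psi_\la}$; getting this pairing right is what reduces simplicity to the clean scalar condition $\Lambda_\la\ne0$, after which the $\la\to0$ limit is routine.
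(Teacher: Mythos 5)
Your proposal is correct and follows essentially the same route as the paper: geometric simplicity comes from the uniqueness statement in Theorem~\ref{c25}, the putative Jordan chain is reduced to the solvability condition $\Delta(\la,\mathrm i\nu_\la,\tau_l)\bm b=a\bigl[\bm\psi_\la-\la\tau_l e^{-\mathrm i\theta_\la}\,\mathrm{diag}(u_{\la,j})\bm\psi_\la\bigr]$, the complex symmetry of $\Delta$ turns this into the transpose pairing $a S_l(\la)=0$ with exactly the paper's quantity $S_l(\la)$, and the limit $S_l(\la)\to\frac1n\bigl(1+\mathrm i(\tfrac{\pi}{2}+2l\pi)\bigr)\ne0$ as $\la\to0$ forces $a=0$. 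Your $\Lambda_\la$ is the paper's $S_l(\la)$, so no further comparison is needed.
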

\begin{proof}
It follows from Theorem \ref{c25} that $\mathscr{N}[A_{\tau_{l}}
(\la)-\rm i\nu_\la]=\text{Span}[e^{\rm{i}\nu_\la\theta}{\bm \psi}_\la]$, where $\theta\in[-\tau_l,0]$ and ${\bm \psi}_\la$ is defined as in
Theorem \ref{c25}.
Then we show that
\begin{equation*}
\mathscr{N}[A_{\tau_{l}}
(\lambda)-\rm i\nu_\la]^2=\mathscr{N}[A_{\tau_{l}}
(\lambda)-\rm i\nu_\la].
\end{equation*} If $\bm \phi\in\mathscr{N}[A_{\tau_{l}}
(\la)-\rm i\nu_\la]^2$,
then
$$
[A_{\tau_{l}}
(\la)-{\rm i}\nu_\la]\bm\phi\in\mathscr{N}[A_{\tau_{l}}(\la)-\textrm i\nu_\la]=
\text{Span}[e^{\textrm i\nu_\la\theta}\bm\psi_\la],
$$
which implies that there exists a constant $a$ such that
$$
[A_{\tau_{l}}
(r)-{\rm i}\nu_\la]\bm\phi=ae^{{\rm i}\nu_\la\theta}\bm\psi_\la.
$$
Then we see that
\begin{equation}
 \label{eq32}
\begin{split}
\dot{\bm \phi}(\theta)&={\rm i}\nu_\la\bm \phi(\theta)+ae^{\rm i\nu_\la\theta}{\bm \psi}_\la,
\ \ \ \ \theta\in[-\tau_{l},0], \\
\dot{\bm \phi}(0)=&D\bm{\phi}(0)+\la \textrm{diag}\left(m_j-u_{\la,j}\right)\bm{ \phi}(0)-\la\textrm{ diag}( u_{\la,j})\bm{ \phi}(-\tau_l).
 \end{split}
 \end{equation}
The first equation of \eqref{eq32} yields
\begin{equation}
\label{eq33}
\begin{split}
\bm \phi(\theta)&=\bm \phi(0)e^{\rm i\nu_\la\theta}+a\theta
e^{\rm i\nu_\la\theta}{\bm \psi}_\la,\\
\dot{{\bm \phi}}(0)&={\rm i}\nu_\la {\bm \phi}(0)+a{\bm \psi}_\la.
\end{split}
\end{equation}
Then, plugging \eqref{eq33} into the second equation of \eqref{eq32}, we have
\begin{equation}\label{provesimple}
\begin{split}
&\Delta(\la,{\rm i}\nu_\la,\tau_{l})\bm \phi(0)\\=&D\bm{\phi}(0)+\la  \textrm{diag}\left(m_j-u_{\la,j}\right)\bm{ \phi}(0)-\la e^{-{\rm i}\theta_\la}\textrm{ diag}( u_{\la,j})\bm{ \phi}(0)-{\rm i}\nu_\la {\bm \phi}(0)
\\=&a\left(\bm \psi_{\la}-\la \tau_l e^{-{\rm i}\theta_\la}\textrm{ diag}( u_{\la,j}){\bm \psi}_\la\right).
\end{split}\end{equation}
Multiplying \eqref{provesimple} by $\left({\psi}_{\la,1},\dots, {\psi}_{\la,n}\right)$ gives
\begin{equation*}
\begin{split}
0=&\left( {\psi}_{\la,1},\dots,{\psi}_{\la,n}\right)\Delta(\la,{\rm i}\nu_\la,\tau_{l})\bm \phi(0)=\left\langle\overline {\bm\psi}_\la,\Delta(\la,{\rm i}\nu_\la,\tau_{l})\bm\phi(0)\right\rangle
\\
=&\left\langle\Delta(\la,-{\rm i}\nu_\la,\tau_{l})\overline {\bm \psi}_\la,\bm \phi(0)\right\rangle=a\left(\sum_{j=1}^n \psi_{\la,j}^2-\la\tau_{l}e^{{\rm-i}\theta_\la}\sum_{j=1}^n u_{\la,j}\psi_{\la,j}^2\right).\\
\end{split}\end{equation*}
Define
\begin{equation}\label{Sn}
S_l(\la):=\sum_{j=1}^n \psi_{\la,j}^2-\la\tau_{l}e^{{\rm-i}\theta_\la}\sum_{j=1}^n u_{\la,j}\psi_{\la,j}^2\;\;\text{for}\;\;l=0,1,2,\dots,
\end{equation}
and we see from Lemma \ref{l25}, Theorems \ref{cha} and \ref{c25} that $\theta_{\la}\to \pi/2$, $\la\tau_l\to \left(\displaystyle\frac{\pi}{2}+2l\pi\right)\displaystyle\frac{n}{\delta}$, and $\psi_{\la,j}\to 1/n$, $u_{\la,j}\to \delta/n$ for any $j=1,\dots,n$ as $\la\to0$, where $\delta$ is defined as in \eqref{del}. Then we obtain that
\begin{equation*}
\lim_{\la\to0}S_l(\la)=\displaystyle\frac{1}{n}\left[1+\textrm i\left(\frac{\pi}{2}+2l\pi\right)\right]\ne0,
\end{equation*}
which implies that
$a=0$. Therefore, for any $l=0,1,2,\dots$,
$$
\mathscr{N}[A_{\tau_{l}}(\la)-{\rm i}\nu_\la]^j
=\mathscr{N}[A_{\tau_{l}}(\la)-{\rm i}\nu_\la],\;\;j=
2,3,\cdots,
$$
and consequently, ${\rm i}\nu_\la$ is a simple eigenvalue of
$A_{\tau_{l}}(\la)$ for $l=0,1,2,\dots$.
\end{proof}

It follows from Theorem \ref{thm34a} that $\mu=\textrm{i}\nu_{\la}$ is a simple eigenvalue of $A_{\tau_{l}}$. Then we see from
the implicit function theorem that there
are a neighborhood $O_{n}\times D_{n}\times
H_{n}$ of
$(\tau_{l},\textrm{i}\nu_\la,{\bm \psi}_\la)$ and a continuously
differential function $(\mu(\tau),\bm\psi(\tau)):O_{n}\rightarrow D_{n}\times
H_{n}$ such that $
\mu(\tau_{l})=\textrm{i}\nu_\la$, $\bm \psi(\tau_{l})={\bm \psi}_\la$, and for each $\tau\in O_{n}$, the only eigenvalue of
$A_\tau(\la)$ in $D_{n}$ is $\mu(\tau),$ and
\begin{equation}\label{eq34}
\begin{split}
\Delta(\la,\mu(\tau),\tau)\bm\psi(\tau)=&D\bm\psi(\tau)-\la \textrm{diag}\left(m_j-u_{\la,j}\right)\bm \psi(\tau)\\
&-\la e^{-\mu(\tau)\tau}\textrm{diag}( u_{\la,j})\bm{ \psi}(\tau)-\mu(\tau)\bm{\psi}(\tau)=0.
\end{split}
\end{equation}
A direct calculation can lead to the transversality
condition.
\begin{theorem}\label{thm35}
For
$\la\in(0,\la_2]$, where $\la_2$ is sufficiently small,
$$
\frac{d{\mathcal Re}[\mu(\tau_{l})]} {d\tau}>0, \;\;l=0,1,2,\cdots.$$
\end{theorem}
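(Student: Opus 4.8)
The plan is to differentiate the defining identity \eqref{eq34} with respect to $\tau$, evaluate at $\tau=\tau_l$ and $\mu=\mathrm{i}\nu_\la$, and then project onto the adjoint kernel to extract $\mu'(\tau_l)$. First I differentiate $\Delta(\la,\mu(\tau),\tau)\bm\psi(\tau)=\bm 0$ in $\tau$ and collect terms. Using $e^{-\mathrm{i}\nu_\la\tau_l}=e^{-\mathrm{i}\theta_\la}$ from \eqref{par}, the derivative at $(\tau_l,\mathrm{i}\nu_\la)$ takes the form
\[ \Delta(\la,\mathrm{i}\nu_\la,\tau_l)\dot{\bm\psi}(\tau_l)+\mu'(\tau_l)\left[\la\tau_l e^{-\mathrm{i}\theta_\la}\mathrm{diag}(u_{\la,j})\bm\psi_\la-\bm\psi_\la\right]+\mathrm{i}\la\nu_\la e^{-\mathrm{i}\theta_\la}\mathrm{diag}(u_{\la,j})\bm\psi_\la=\bm 0, \]
where $\bm\psi_\la=\beta_\la\bm\eta+\la\bm z_\la$ is the eigenvector of Theorem \ref{c25}.

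Next I eliminate the unknown $\dot{\bm\psi}(\tau_l)$. Since $D$ and the diagonal matrices are real symmetric, $\Delta(\la,\mu,\tau)$ is complex symmetric, so conjugating $\Delta(\la,\mathrm{i}\nu_\la,\tau_l)\bm\psi_\la=\bm 0$ gives $\Delta(\la,-\mathrm{i}\nu_\la,\tau_l)\overline{\bm\psi}_\la=\bm 0$ — the same adjoint relation already exploited in the proof of Theorem \ref{thm34a}. Consequently $\langle\overline{\bm\psi}_\la,\Delta(\la,\mathrm{i}\nu_\la,\tau_l)\bm v\rangle=\langle\Delta(\la,-\mathrm{i}\nu_\la,\tau_l)\overline{\bm\psi}_\la,\bm v\rangle=0$ for every $\bm v$, so pairing the displayed identity with $\overline{\bm\psi}_\la$ kills the $\dot{\bm\psi}(\tau_l)$ term. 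Using $\langle\overline{\bm\psi}_\la,\mathrm{diag}(u_{\la,j})\bm\psi_\la\rangle=\sum_j u_{\la,j}\psi_{\la,j}^2$ and $\langle\overline{\bm\psi}_\la,\bm\psi_\la\rangle=\sum_j\psi_{\la,j}^2$, the bracket collapses to $-S_l(\la)$ with $S_l$ as in \eqref{Sn}, and I obtain
\[ \mu'(\tau_l)=\frac{\mathrm{i}\la\nu_\la e^{-\mathrm{i}\theta_\la}\sum_{j=1}^n u_{\la,j}\psi_{\la,j}^2}{S_l(\la)}. \]

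Now comes the sign analysis. Writing $A=\sum_j u_{\la,j}\psi_{\la,j}^2$ and $B=\sum_j\psi_{\la,j}^2$, so that $S_l(\la)=B-\la\tau_l e^{-\mathrm{i}\theta_\la}A$, I compute $\mathcal Re[\mu'(\tau_l)]=\mathcal Re[\mathrm{i}\la\nu_\la e^{-\mathrm{i}\theta_\la}A\,\overline{S_l(\la)}]/|S_l(\la)|^2$. The crucial simplification is that the only $l$-dependent contribution to the numerator, namely $-\mathrm{i}\la^2\nu_\la\tau_l|A|^2$, is purely imaginary and therefore contributes nothing to the real part; hence
\[ \mathcal Re[\mu'(\tau_l)]=\frac{\la\nu_\la\,\mathcal Re\!\left[\mathrm{i}e^{-\mathrm{i}\theta_\la}A\overline B\right]}{|S_l(\la)|^2}, \]
whose numerator is independent of $l$ and whose prefactor $\la\nu_\la=\la^2 h_\la$ is positive. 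Thus the sign of $\mathcal Re[\mu'(\tau_l)]$ equals that of $\mathcal Re[\mathrm{i}e^{-\mathrm{i}\theta_\la}A\overline B]$, which is the same for all $l$. Finally, by Lemma \ref{l25} and Theorem \ref{cha}, as $\la\to0$ one has $\theta_\la\to\pi/2$, $\psi_{\la,j}\to1/n$ and $u_{\la,j}\to\delta/n$, so $e^{-\mathrm{i}\theta_\la}\to-\mathrm{i}$, $A\to\delta/n^2$, $B\to1/n$, and $\mathrm{i}e^{-\mathrm{i}\theta_\la}A\overline B\to\delta/n^3>0$ since $\delta>0$ in Case I. By continuity this real part remains positive for all sufficiently small $\la$, uniformly in $l$, so a single small $\la_2$ gives $\frac{d\mathcal Re[\mu(\tau_l)]}{d\tau}=\mathcal Re[\mu'(\tau_l)]>0$ for every $l$ and every $\la\in(0,\la_2]$.

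I expect the main obstacle to be the elimination step: correctly recognizing the complex-symmetric adjoint structure so that $\dot{\bm\psi}(\tau_l)$ drops out, and then observing the cancellation of the $l$-dependent purely imaginary term. It is precisely this cancellation that reduces the sign to an $l$-independent quantity and lets a single threshold $\la_2$ work simultaneously for all $l$, rather than forcing an $l$-dependent bound whose positivity would degrade as $l\to\infty$.
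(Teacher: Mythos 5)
Your proposal is correct and follows essentially the same route as the paper: differentiate the eigenvalue identity at $\tau=\tau_l$, use the complex symmetry of $\Delta$ to pair with $\overline{\bm\psi}_\la$ and eliminate $\dot{\bm\psi}(\tau_l)$, arrive at $\mu'(\tau_l)=\mathrm{i}\la\nu_\la e^{-\mathrm{i}\theta_\la}\sum_j u_{\la,j}\psi_{\la,j}^2/S_l(\la)$, and conclude positivity from the $\la\to0$ limits. Your extra observation that the $l$-dependent term $-\mathrm{i}\la^2\tau_l\nu_\la|A|^2$ is purely imaginary, so the sign of $\mathcal Re[\mu'(\tau_l)]$ is independent of $l$ and one threshold $\la_2$ works uniformly in $l$, is a welcome sharpening of the paper's argument, which only checks that $\lim_{\la\to0}\la^{-2}\mathcal Re[\mu'(\tau_l)]>0$ for each fixed $l$.
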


\begin{proof}
Differentiate \eqref{eq34} with respect to $\tau$ at $\tau=\tau_l$, we have
\begin{equation}\label{326}
\displaystyle\frac{d\mu(\tau_l)}{d\tau}\left(-\bm \psi_\la+\la\tau_le^{-{\rm i}\theta_\la}\textrm{diag}(u_{\la,j})\bm \psi_\la\right)+\Delta(\la,{\rm i}\nu_\la,\tau_l)\displaystyle\frac{d\bm \psi}{d\tau}+{\rm i}\la\nu_\la e^{-{\rm i}\theta_\la}\textrm{diag}(u_{\la,j})\bm \psi_\la=\bm 0.
\end{equation}
Multiplying \eqref{326} by $\left({\psi}_{\la,1},\dots,
{\psi}_{\la,n}\right)$,  we have
\begin{equation*}
S_n(\la)\displaystyle\frac{d\mu(\tau_l)}{d\tau}+\left\langle \Delta(\la,{-\rm i}\nu_\la,\tau_l) \overline {\bm \psi}_\la,\displaystyle\frac{d\bm \psi}{d\tau}\right\rangle ={\rm i}\la\nu_\la e^{-{\rm i}\theta_\la}\sum_{j=1}^n u_{\la,j}\psi_{\la,j}^2,
\end{equation*}
where $S_n(\la)$ is defined as in \eqref{Sn}. Then
\begin{equation*}
\frac{d\mu(\tau_{l})} {d\tau}=\displaystyle\frac{{\rm i}e^{-{\rm i}\theta_\la}\la\nu_\la\left(\sum_{j=1}^n\psi_{\la,j}^2\right)\sum_{j=1}^nu_{\la,j}\psi_{\la,j}^2}{|S_n(\la)|^2}
-\displaystyle\frac{{\rm i}\la^2\tau_l\nu_\la\left(\sum_{j=1}^nu_{\la,j}\psi_{\la,j}^2\right)^2}{|S_n(\la)|^2}.
\end{equation*}
It follows from Lemma \ref{l25}, Theorems \ref{cha} and \ref{c25} that $\theta_{\la}\to \pi/2$, $\la\tau_l\to \left(\displaystyle\frac{\pi}{2}+2l\pi\right)\displaystyle\frac{n}{\delta}$, $\displaystyle\frac{\nu_\la}{\la}\to\displaystyle\frac{\delta}{n}$, and $\psi_{\la,j}\to 1/n$, $u_{\la,j}\to \delta/n$ for any $j=1,\dots,n$ as $\la\to0$, where $\delta$ is defined as in \eqref{del}. Therefore,
\begin{equation*}
\lim_{\la\to0}
\displaystyle\frac{1}{\la^2}\frac{d{\mathcal Re}[\mu(\tau_{l})]} {d\tau}>0, \;\;l=0,1,2,\cdots,
\end{equation*}
and this completes the proof.
\end{proof}

Finally, from Theorems \ref{c25}, \ref{thm34a} and \ref{thm35}, we obtain the stability/instability of the positive steady state $\bm u_{\la}$, and the existence of the associated Hopf bifurcation.
%\begin{theorem}\label{thm36}
%For
%$r\in(r_*,r^*]$, the infinitesimal generator
%$A_\tau(r)$ has exactly $2(n+1)$ eigenvalues with positive
%real parts when $\tau\in(\tau_{n},\tau_{n+1}],\
%n=0,1,2,\cdots.$
%\end{theorem}
\begin{theorem}\label{thm37}
Suppose that $(A_1)$ and $(A_2)$ hold, and  $\delta>0$.
Then for $\la\in(0,\la_2]$, where $\la_2$ is sufficiently small,
the unique positive steady state $\bm u_\la$ of \eqref{ndif}
is locally asymptotically stable when $\tau\in[0,\tau_{0})$, and
unstable when $\tau\in(\tau_{0},\infty)$. Moreover, when $\tau=\tau_0$, system \eqref{ndif} occurs Hopf bifurcation at $\bm u_{\la}$.
\end{theorem}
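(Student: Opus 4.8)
The statement is the standard conclusion of a delay-induced Hopf bifurcation analysis, and the plan is to assemble it from the four structural results already established: the complete description of the purely imaginary spectrum (Theorem \ref{c25}), the simplicity of $\mathrm i\nu_\la$ (Theorem \ref{thm34a}), the transversality condition (Theorem \ref{thm35}), together with one additional ingredient, namely local asymptotic stability at $\tau=0$. Throughout I would track the number of eigenvalues of $A_\tau(\la)$ in the closed right half-plane as $\tau$ increases from $0$.

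First I would establish stability at $\tau=0$. When $\tau=0$ the characteristic problem \eqref{eigen} reduces to the eigenvalue problem for the Jacobian $J:=D+\la\,\mathrm{diag}(m_j-2u_{\la,j})$. Writing $M:=D+\la\,\mathrm{diag}(m_j-u_{\la,j})$, the equilibrium identity \eqref{ndifsteady} says exactly that $M\bm u_\la=\bm 0$ with $\bm u_\la\gg\bm 0$; since $M$ is quasi-positive and irreducible, Perron--Frobenius forces $s(M)=0$. Because $J=M-\la\,\mathrm{diag}(u_{\la,j})$ is obtained from $M$ by subtracting a strictly positive diagonal, strict monotonicity of the spectral bound gives $s(J)<s(M)=0$. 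Hence every eigenvalue of $A_0(\la)$ lies in the open left half-plane, so $\bm u_\la$ is locally asymptotically stable (and hyperbolic) at $\tau=0$; alternatively this follows from the global stability in Lemma \ref{lm2.2}. Note also that $\mu=0$ is never an eigenvalue of $A_\tau(\la)$ for any $\tau$, since the defining determinant at $\mu=0$ is independent of $\tau$ and equals $\det J\neq0$.

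Next I would convert the local crossing data into the stability/instability dichotomy. By Theorem \ref{c25}, the only purely imaginary eigenvalues of $A_\tau(\la)$ with positive imaginary part are $\mathrm i\nu_\la$, occurring exactly at the discrete increasing sequence $\tau=\tau_l=(\theta_\la+2l\pi)/\nu_\la$, $l=0,1,2,\dots$, whose smallest member is $\tau_0=\theta_\la/\nu_\la>0$ (here $\theta_\la,\nu_\la>0$ for small $\la$ by Theorems \ref{cha} and \ref{c25}); by conjugation the same holds for $-\mathrm i\nu_\la$. I would then invoke the standard spectral theory of retarded functional differential equations: the spectrum is bounded to the right, only finitely many eigenvalues lie in any right half-plane, they depend continuously on $\tau$, and the number lying in the closed right half-plane can change only when an eigenvalue crosses the imaginary axis. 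Since there are no such crossings on $[0,\tau_0)$ and none start in the right half-plane at $\tau=0$, all eigenvalues remain in the open left half-plane for $\tau\in[0,\tau_0)$, proving stability there. At each $\tau_l$ the transversality condition of Theorem \ref{thm35} forces the crossing to be strictly from left to right, so a conjugate pair enters the right half-plane at $\tau_0$ and the count there is nondecreasing; thus at least two eigenvalues have positive real part for every $\tau>\tau_0$, giving instability.

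Finally, the Hopf bifurcation at $\tau=\tau_0$ follows from the Hopf bifurcation theorem for functional differential equations: at $\tau=\tau_0$ the operator $A_{\tau_0}(\la)$ has the simple pair of purely imaginary eigenvalues $\pm\mathrm i\nu_\la$ (simplicity from Theorem \ref{thm34a}), these are the only eigenvalues on the imaginary axis (Theorem \ref{c25}, together with $\mu=0$ being excluded and $\tau_0\neq\tau_l$ for $l\ge1$), and the eigenvalue curve crosses transversally (Theorem \ref{thm35}). I expect the main obstacle to be the bookkeeping in the previous paragraph: turning the purely local crossing information into global stability on $[0,\tau_0)$ and instability on $(\tau_0,\infty)$ requires carefully invoking the facts that the right-half-plane spectrum of an RFDE is finite and can change only by imaginary-axis crossings, and it relies on first pinning down hyperbolicity and the absence of a zero eigenvalue at the initial value $\tau=0$. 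Everything else is a direct assembly of the earlier theorems.
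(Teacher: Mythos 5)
Your proposal is correct and follows the same route as the paper, which simply assembles Theorem \ref{thm37} from Theorems \ref{c25}, \ref{thm34a} and \ref{thm35} without further elaboration. The details you supply (hyperbolicity at $\tau=0$ via the Perron--Frobenius argument for $s(J)<s(M)=0$, the exclusion of a zero eigenvalue, and the eigenvalue-counting bookkeeping) are exactly the steps the paper leaves implicit, and they are all sound.
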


Note that model \eqref{ndif} is equivalent to \eqref{patc}, and $\tau=dr$ and $\la=1/d$, where parameters $d, r$ are in model \eqref{patc}, and parameters $\la,\tau$ are in model \eqref{ndif}. Then we have the following results.
\begin{theorem}\label{equivthm37}
Suppose that $(A_1)$ and $(A_2)$ hold, and $\delta>0$. Then the following results statements hold.
\begin{enumerate}
\item [$(i)$] For any $d>0$, model \eqref{patc} admits a unique positive equilibrium $\bm u^d$.
\item [$(ii)$] For any  $d\in(d_1,\infty]$, where $d_1$ is sufficiently large, there exists $r_0>0$ such
that
the unique positive steady state $\bm u^d$ of \eqref{patc}
is locally asymptotically stable when $r\in[0,r_0)$, and
unstable when $r\in(r_{0},\infty)$. Moreover, when $r=r_0$, system \eqref{patc} occurs Hopf bifurcation at $\bm u^{d}$.
\end{enumerate}
\end{theorem}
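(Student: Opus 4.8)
The plan is to obtain both statements as direct corollaries of the analysis already carried out for the equivalent model \eqref{ndif}, by unwinding the change of variables $\tilde t=dt$, $\la=1/d$, $\tau=dr$. The key point is that this is a \emph{positive} rescaling of time together with a reparametrization, so (a) the constant equilibria of \eqref{patc} and \eqref{ndif} literally coincide, and (b) all the relevant qualitative features—local asymptotic stability, instability, and the Hopf bifurcation with its transversality—are invariant, because scaling time by $d>0$ multiplies every eigenvalue of the linearized generator by $d$ and hence preserves the sign of its real part.

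For part $(i)$, I would observe that a constant vector $\bm u^*$ is an equilibrium of \eqref{patc} exactly when $d\sum_{k}d_{jk}u^*_k+u^*_j(m_j-u^*_j)=0$ for all $j$, which after dividing by $d$ is precisely the steady-state equation \eqref{ndifsteady} with $\la=1/d$. Therefore $\bm u^d=\bm u_\la$ for $\la=1/d$, and since $\delta>0$, Lemma \ref{lm2.2}$(i)$ (equivalently Lemma \ref{lm2.3}$(i)$) guarantees a unique positive equilibrium $\bm u_\la\gg\bm 0$ for every $\la>0$, i.e.\ for every $d>0$. This gives $(i)$ at once.

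For part $(ii)$, I would set $d_1=1/\la_2$, so that $d>d_1$ corresponds to $\la=1/d\in(0,\la_2)$, the regime where Theorem \ref{thm37} applies, and define the candidate critical delay by $r_0=\tau_0/d=\la\,\tau_0$, where $\tau_0=\tau_0(\la)=\theta_\la/\nu_\la>0$ is the first Hopf value of \eqref{ndif}; thus $r_0>0$. Because $d>0$ is fixed, the maps $\tau=dr$ give the equivalences $\tau\in[0,\tau_0)\Longleftrightarrow r\in[0,r_0)$ and $\tau\in(\tau_0,\infty)\Longleftrightarrow r\in(r_0,\infty)$, so the stability/instability dichotomy of Theorem \ref{thm37} transfers verbatim to $\bm u^d$ in the variable $r$. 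The Hopf bifurcation at $\tau=\tau_0$ likewise transfers to $r=r_0$: an eigenvalue $\mu(\tau)$ of $A_\tau(\la)$ corresponds to the eigenvalue $d\,\mu(\tau)$ for \eqref{patc}, which is purely imaginary iff $\mu$ is, and the transversality inequality $d\,{\mathcal Re}[\mu(\tau_0)]/d\tau>0$ of Theorem \ref{thm35} becomes, via the chain rule, $d^2\,(d{\mathcal Re}\mu/d\tau)>0$ in the variable $r$, which is still positive. Simplicity of the imaginary eigenvalue (Theorem \ref{thm34a}) is unaffected by the scaling.

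The endpoint $d=\infty$ in $(d_1,\infty]$ is the only point needing care, and I regard it as the mild main obstacle: although $\tau_0(\la)\to\infty$ as $\la\to 0$, the relevant rescaled quantity $r_0=\la\,\tau_0=\theta_\la/h_\la$ remains bounded and converges to $n\pi/(2\delta)$ by the limits $\theta_\la\to\pi/2$ and $h_\la\to\delta/n$ furnished by Lemma \ref{l25} and Theorem \ref{cha}. So $r_0$ extends continuously to the limiting (``average model'') case $d=\infty$, and the translation of the transversality and simplicity statements through the time rescaling introduces no new computation, since scaling time by the positive constant $d$ is a conjugacy of the two semiflows that maps the spectral picture of one generator onto that of the other up to the factor $d$.
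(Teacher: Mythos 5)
Your proposal is correct and follows exactly the route the paper intends: the paper offers no separate proof of Theorem \ref{equivthm37}, treating it as an immediate consequence of the equivalence $\tau=dr$, $\la=1/d$ between \eqref{patc} and \eqref{ndif} together with Lemma \ref{lm2.2} and Theorems \ref{thm37}, \ref{thm34a}, \ref{thm35}. Your additional care about the endpoint $d=\infty$ and the chain-rule check for transversality only makes explicit what the paper leaves implicit.
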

\begin{remark}\label{r310}
Since $\tau=dr$ and $\la=1/d$, where parameters $d, r$ are in \eqref{patc}, and parameters $\la,\tau$ are in \eqref{ndif}, we see from Lemma \ref{l25}
and Theorems \ref{cha} and  \ref{c25} that:
\begin{equation*}
\lim_{d\to\infty} r_0=\lim_{d\to\infty}\displaystyle\frac{\tau_0}{d}=\lim_{\la\to0}\la\tau_0=\check r:=\displaystyle\frac{n\pi}{2\sum_{i=1}^n m_i}.
\end{equation*}
Here $\check r$ is also the first Hopf  bifurcation value for the following model:
\begin{equation}\label{average}
w'=w\left(\displaystyle\frac{\sum_{j=1}^n m_j}{n}-w(t-r)\right).
\end{equation}
Therefore, in some sense, the large dispersal rate $d$ may dilute the effect of the connectivity between patches, and when the dispersal rate tends to infinity,
the first Hopf bifurcation value of model \eqref{patc} tends to that of the \lq\lq average\rq\rq~model \eqref{average}.
\begin{remark}
By the similar arguments as that in \cite{ChenWeiZhang}, we can compute that the direction of the Hopf bifurcation at $r=r_0$ is forward and the bifurcating periodic solution from $r=r_0$ is orbitally asymptotically stable. Here we omit the details.
\end{remark}

\end{remark}
\subsection{Case II: $\delta<0$}
In this subsection, we consider the case of $\delta<0$. In this case, $\bm u_\la$ exists and has the form of \eqref{formu} for $\la>\la_*$, where $\la_*$ is defined as in Lemma \ref{lm2.1}.
The arguments are similar as that in the case of $\delta>0$, and therefore we only provide the main results without proof.

We also give an estimates for solutions of Eq. \eqref{eigen} for this case.
\begin{lemma}\label{2nu}
Assume that $(\hat \nu_\la,\hat \theta_\la,\hat {\bm{\psi}}_\la )$ solves \eqref{eigen}, where $\hat{\bm{\psi}}_\la=(\hat\psi_{\la,1},\dots, \hat \psi_{\la,n})^T(\ne\bm 0)
\in\mathbb{C}^n$.  Then for any $\hat  \la_1>\la_*$,
$\left|\ds
\frac{\hat \nu_\la}{\la-\la_*}\right|$ is bounded for
$\la\in(\la_*,\hat\la_1]$, where $\la_*$ is defined as in Lemma \ref{lm2.1}.\end{lemma}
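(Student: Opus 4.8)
The plan is to mirror the proof of Lemma \ref{nu}, the only genuinely new ingredient being that in Case II the equilibrium $\bm u_\la$ vanishes linearly as $\la\to\la_*$. First I would substitute the solution $(\hat\nu_\la,\hat\theta_\la,\hat{\bm\psi}_\la)$ into \eqref{eigen} and multiply the equation on the left by the row vector $\left(\overline{\hat\psi}_{\la,1},\dots,\overline{\hat\psi}_{\la,n}\right)$, exactly as in Lemma \ref{nu}. The symmetry of $D=(d_{jk})_{n\times n}$ then forces the dispersal contribution
$$\sum_{j=1}^n\sum_{k\ne j}d_{jk}\left(\overline{\hat\psi}_{\la,j}\hat\psi_{\la,k}-\left|\hat\psi_{\la,j}\right|^2\right)$$
to be a real quantity (indeed, after symmetrization it is $\le 0$), so it makes no contribution when we isolate the imaginary part of the resulting scalar identity.

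Taking the imaginary part yields the exact analogue of the key identity of Lemma \ref{nu}, namely
$$\hat\nu_\la\sum_{j=1}^n\left|\hat\psi_{\la,j}\right|^2=\la\sin\hat\theta_\la\sum_{j=1}^n u_{\la,j}\left|\hat\psi_{\la,j}\right|^2.$$
At this point the crucial departure from Case I is to invoke the representation \eqref{formu}, $u_{\la,j}=\alpha_\la(\la-\la_*)\left[\hat\eta_j+(\la-\la_*)\xi_{\la,j}\right]$, in order to extract a factor $(\la-\la_*)$ from the right-hand side. Dividing through gives
$$\frac{\hat\nu_\la}{\la-\la_*}=\frac{\la\sin\hat\theta_\la\sum_{j=1}^n\alpha_\la\left[\hat\eta_j+(\la-\la_*)\xi_{\la,j}\right]\left|\hat\psi_{\la,j}\right|^2}{\sum_{j=1}^n\left|\hat\psi_{\la,j}\right|^2},$$
and since the fraction on the right is a weighted average of the coefficients $\alpha_\la\left[\hat\eta_j+(\la-\la_*)\xi_{\la,j}\right]$, using $\left|\sin\hat\theta_\la\right|\le 1$ I can bound it in modulus by $\la\max_{j}\left|\alpha_\la\left[\hat\eta_j+(\la-\la_*)\xi_{\la,j}\right]\right|$, which is exactly $\la\,\|\bm u_\la\|_\infty/(\la-\la_*)$, with $\|\cdot\|_\infty$ as in \eqref{norm}.

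The final step is to check that this bound stays finite as $\la\downarrow\la_*$. By Lemma \ref{lm2.3}$(ii)$, the scalar $\alpha_\la$ and the vector $\bm\xi_\la$ extend to continuously differentiable functions on $[\la_*,\hat\la_1]$; hence $\max_{j}\left|\alpha_\la\left[\hat\eta_j+(\la-\la_*)\xi_{\la,j}\right]\right|$ is continuous on the compact interval $[\la_*,\hat\la_1]$ and therefore bounded, and combining this with $\la\le\hat\la_1$ delivers the claimed uniform bound on $\left|\hat\nu_\la/(\la-\la_*)\right|$. I do not expect a real obstacle: the entire difficulty is concentrated in showing that $u_{\la,j}/(\la-\la_*)$ has a finite limit at $\la_*$, and this is precisely the content of the smooth factorization established in Lemma \ref{lm2.3}$(ii)$. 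In other words, the normalization $\bm u_\la=\alpha_\la(\la-\la_*)\left[\hat{\bm\eta}+(\la-\la_*)\bm\xi_\la\right]$ was set up exactly so that dividing the imaginary-part identity by $(\la-\la_*)$ produces a quantity that remains controlled up to the bifurcation value $\la_*$.
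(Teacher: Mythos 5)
Your proof is correct and follows exactly the route the paper intends: the paper omits the proof of this lemma, stating that the arguments are similar to those of Lemma \ref{nu}, and your argument is precisely that adaptation --- the same imaginary-part identity $\hat\nu_\la\sum_j|\hat\psi_{\la,j}|^2=\la\sin\hat\theta_\la\sum_j u_{\la,j}|\hat\psi_{\la,j}|^2$ obtained via the symmetry of $D$, combined with the factorization $\bm u_\la=\alpha_\la(\la-\la_*)[\hat{\bm\eta}+(\la-\la_*)\bm\xi_\la]$ and the continuity of $\alpha_\la$ and $\bm\xi_\la$ on $[\la_*,\hat\la_1]$ from Lemma \ref{lm2.3}$(ii)$. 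No gaps.
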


As in Lemma \ref{lem21}, we have
\begin{lemma}\label{2lem21}
Assume that $\hat{\bm z}\in (\hat X_1)_{\mathbb C}$, where $\hat  X_1$ is defined as in \eqref{X1}.
Let
\begin{equation}\label{Dhat}
\hat D=D+\la_*{\rm diag}(m_j),
\end{equation}
 where $\la_*$ is defined as in Lemma \ref{lm2.1}.
Then $|
\langle\hat{\bm z}, \hat D\hat{\bm z}\rangle|\ge
\hat \gamma_2\|\hat{\bm z}\|_2^2$, where $\|\cdot \|_2$ is defined as in \eqref{norm}, and $-\hat\gamma_2(<0)$ is the second
largest eigenvalue of matrix $\hat D$.
\end{lemma}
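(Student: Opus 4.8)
The plan is to exploit that $\hat D = D + \la_*\,\mathrm{diag}(m_j)$ is a real symmetric matrix, so that it is self-adjoint with respect to the Hermitian inner product $\langle \bm u,\bm v\rangle = \sum_{j=1}^n \overline u_j v_j$, and then to read off the estimate from the spectral decomposition of $\hat D$ on the invariant subspace $(\hat X_1)_{\mathbb C}$. Since $D$ is symmetric by $(A_1)$ and $\mathrm{diag}(m_j)$ is symmetric, $\hat D$ is symmetric; hence all of its eigenvalues are real and $\langle \hat{\bm z},\hat D\hat{\bm z}\rangle$ is real for every $\hat{\bm z}\in\mathbb C^n$.

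First I would record the spectral picture of $\hat D$. By Lemma \ref{lm2.1}$(ii)$ we have $s(\hat D) = s(\la_* Q + D) = 0$, and since $\hat D$ is quasi-positive and irreducible, the Perron--Frobenius theorem gives that $0$ is a \emph{simple} eigenvalue with the strictly positive eigenvector $\hat{\bm\eta}$. As $\hat D$ is symmetric its eigenvalues are real, so the remaining eigenvalues all lie strictly below $0$; the second largest is $-\hat\gamma_2$ with $\hat\gamma_2>0$, and $\hat D$ admits an orthonormal eigenbasis.

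Next I would identify $(\hat X_1)_{\mathbb C}$ with the orthogonal complement of $\hat{\bm\eta}$. Because $\hat{\bm\eta}$ is real, $\overline{\hat\eta_j}=\hat\eta_j$, so the defining relation $\sum_{j=1}^n\hat\eta_j x_j = 0$ of $\hat X_1$ in \eqref{X1} is exactly $\langle\hat{\bm\eta},\bm x\rangle = 0$; complexifying, $(\hat X_1)_{\mathbb C} = \{\bm z\in\mathbb C^n:\langle\hat{\bm\eta},\bm z\rangle=0\} = (\mathrm{span}\{\hat{\bm\eta}\})^\perp$. Since $\hat D$ is self-adjoint and $\hat{\bm\eta}$ is its eigenvector, this orthogonal complement is $\hat D$-invariant, and the restriction of $\hat D$ to $(\hat X_1)_{\mathbb C}$ is self-adjoint with all eigenvalues $\le -\hat\gamma_2$.

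Finally, the Rayleigh-quotient bound for a self-adjoint operator gives, for every $\hat{\bm z}\in(\hat X_1)_{\mathbb C}$,
\[
\langle\hat{\bm z},\hat D\hat{\bm z}\rangle \le -\hat\gamma_2\|\hat{\bm z}\|_2^2 < 0,
\]
whence $|\langle\hat{\bm z},\hat D\hat{\bm z}\rangle| = -\langle\hat{\bm z},\hat D\hat{\bm z}\rangle \ge \hat\gamma_2\|\hat{\bm z}\|_2^2$, which is the claim. I do not expect a genuine obstacle here, since the argument is exactly that of Lemma \ref{lem21} (Busenberg--Huang) with $D$ replaced by $\hat D$. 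The only point needing care is the bookkeeping of the complexification: one must verify that $(\hat X_1)_{\mathbb C}$ is the \emph{Hermitian}-orthogonal complement of $\hat{\bm\eta}$ (which uses that $\hat{\bm\eta}$ is real-valued) and that it is genuinely $\hat D$-invariant, so that the spectral estimate may be applied on it.
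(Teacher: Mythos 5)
Your proof is correct and is exactly the argument the paper intends: the paper omits the proof of this lemma, deferring to Lemma \ref{lem21}, which in turn is the analogue of Lemma 2.3 of Busenberg--Huang, and that argument is precisely your spectral one — $\hat D$ is real symmetric with simple largest eigenvalue $s(\hat D)=0$ and Perron eigenvector $\hat{\bm\eta}\gg\bm 0$, the space $(\hat X_1)_{\mathbb C}$ is the Hermitian-orthogonal complement of $\hat{\bm\eta}$ and hence $\hat D$-invariant, and the Rayleigh-quotient bound on that invariant subspace gives $\langle\hat{\bm z},\hat D\hat{\bm z}\rangle\le-\hat\gamma_2\|\hat{\bm z}\|_2^2$. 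Your care with the two side points (that $\hat{\bm\eta}$ is real so the defining condition of $\hat X_1$ really is Hermitian orthogonality, and that $\la_*$ is available because this lemma sits in the case $\delta<0$) is exactly what is needed.
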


%\end{proof}
Assume that $(\hat \nu_\la,\hat \theta_\la,\hat {\bm{\psi}}_\la )$ solves \eqref{eigen} for $\la\in(\la_*,\hat\la_1]$, where $\hat\la_1>\la_*$.
Then ignoring a scalar factor, $\hat{\bm \psi}$ in Eq. \eqref{eigen} can be represented as
\begin{equation}
\label{2eigen2}
\begin{split}
&\hat{\bm \psi}= \hat\beta\hat {\bm \eta}+(\la-\la_*)\hat {\bm  z},\;\;\; \hat{\bm z}\in (\hat X_1)_{\mathbb{C}},\; \; \hat \beta\geq0, \\
 &\|\hat{\bm \psi}\|^2_{2}={\hat \beta}^2 \|\hat{\bm \eta}\|^2_2
 +(\la-\la_*)^2\|\hat{\bm z}\|^2_{2}=  \|\hat{\bm \eta}\|^2_2,
 \end{split}
 \end{equation}
 where $\hat{\bm \eta}$ and $\hat X_1$ are defined as in Lemma \ref{lm2.3}.
Note that $$ {\bm u}_\la=\alpha_\la(\la-\la_*)\left[\hat{\bm\eta}+(\la-\la_*){\bm \xi}_\la\right],$$
where $\alpha_\la$ and $\bm \xi_\la$ are defined as in Lemma \ref{lm2.3}.
Then we see that $(\hat\nu,\hat\theta,\hat{\bm \psi})$ solves Eq. \eqref{eigen}, where $\hat\nu>0$, $\hat\theta\in[0,2\pi)$ and $\hat{\bm \psi}\in \mathbb C^n(\|\hat{\bm \psi}\|^2_{2}= \|\bm \hat \eta\|_2^2)$,
if and only if the following system:
\begin{equation}\label{2g1}
\begin{cases}
\hat{\bm g_1}(\hat{\bm z},\hat\beta,\hat h,\hat\theta,\la)
:=\hat D\hat{\bm{z}}+ \textrm{diag}(m_j)[\hat\beta\hat{\bm \eta}+(\la-\la_*)\hat{\bm z}]-\textrm{i}\hat h[\hat\beta\hat{\bm \eta}+(\la-\la_*)\hat{\bm z}]\\
~~~~~~~~~~~~~~~~~~~~-\la\alpha_\la\textrm{diag}\left(\hat{\eta}_j+(\la-\la_*)\xi_j^\la\right)[\hat\beta\hat{\bm \eta}+(\la-\la_*)\hat{\bm z}]\\
~~~~~~~~~~~~~~~~~~~~-\la\alpha_\la e^{-\textrm i\hat\theta}\textrm{diag}\left(\hat{\eta}_j+(\la-\la_*)\xi_j^\la\right)[\hat\beta\hat{\bm \eta}+(\la-\la_*)\hat{\bm z}]=\bm 0,\\
\hat g_2(\hat{\bm z},\hat \beta,\la):=(\hat\beta^2-1)\|\hat{\bm \eta}\|^2_2+(\la-\la_*)^2\|\hat{\bm z}\|^2_{2}=0\\
\end{cases}
\end{equation}
has a solution $(\hat{\bm z},\hat \beta,\hat h,\hat \theta)$, where $\hat D$ is defined as in \eqref{Dhat}, $\hat{\bm z}\in (\hat X_1)_{\mathbb{C}}$, $\hat \beta\ge0$, $\hat h>0$ and $\hat \theta\in[0,2\pi)$.
Then we define
$\hat{\bm G}:(X_1)_{\mathbb C}\times \mathbb{R}^4\to
\mathbb C^n\times \mathbb{R}$ by $\hat {\bm G}=(\hat{\bm g_1},\hat g_2)^T$. For this case, we will firstly show that
$\hat{\bm G}(\hat{\bm z},\hat \beta,\hat h,\hat \theta,\la )=0$ has a unique solution for $\la=\la_*$.
\begin{lemma}\label{2l25}
The following equation \begin{equation}\label{23.5G}
\begin{cases}
\hat{\bm G}(\hat{\bm z},\hat\beta,\hat h,\hat \theta,\la_*)={\bm 0},\\
\hat{\bm z}\in (X_1)_{\mathbb{C}},\;\hat h\ge0,\;\hat \beta\ge0,\; \hat \theta\in[0,2\pi]\\
\end{cases}
\end{equation} has a unique solution $(\hat {\bm z}_{\la_*},\hat \beta_{\la_*},\hat h_{\la_*},\hat \theta_{\la_*})$. Here
\begin{equation}\label{2lastar}
    \hat\beta_{\la_*}=1,\;\;\hat\theta_{\la_*}=\pi/2,\;\;\hat h_{\la_*}=\displaystyle\frac{\sum_{j=1}^nm_j\hat \eta_j^2}{\sum_{j=1}^n\hat \eta_j^2},
\end{equation}
and $\hat {\bm z}_{\la_*}=(\hat z_{\la_*,1},\dots,\hat z_{\la_*,n})^T\in(\hat X_1)_{\mathbb C}$ is the unique solution of the following equation
\begin{equation}\label{2z0}
\sum_{k=1}^n d_{jk} \hat z_k+\la_*m_j\hat z_j+m_j\hat \eta_j-{\rm i}\hat h_{\la_*}\hat \eta_j-\la_*\alpha_{\la_*}\hat\eta_j^2+{\rm i}\la_*\alpha_{\la_*}\hat\eta_j^2=0, \;\;j=1,\dots,n.
\end{equation}
where $\alpha_{\la_*}$ and $\hat{\bm \eta}$ are defined in Lemma \ref{lm2.3}.
\end{lemma}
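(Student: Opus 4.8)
The plan is to mimic the proof of Lemma~\ref{l25}, with $D$ replaced by $\hat D=D+\la_*\,\mathrm{diag}(m_j)$ and the uniform kernel vector $\bm\eta$ replaced by the Perron eigenvector $\hat{\bm\eta}$ of $\la_*Q+D$. The structural facts I would lean on are that $\hat D$ is real symmetric, that $\hat D\hat{\bm\eta}=s(\la_*Q+D)\hat{\bm\eta}=\bm 0$ with the zero eigenvalue simple, and hence that $\hat D$ restricts to a bijection of $(\hat X_1)_{\mathbb C}$ onto itself; indeed $(\hat X_1)_{\mathbb C}$ coincides with the orthogonal complement $\{\hat{\bm\eta}\}^{\perp}$ in $\mathbb C^n$ because the entries $\hat\eta_j$ are real.

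Setting $\la=\la_*$ annihilates every factor $(\la-\la_*)$, so $\hat g_2=(\hat\beta^2-1)\|\hat{\bm\eta}\|_2^2=0$ together with $\hat\beta\ge0$ forces $\hat\beta_{\la_*}=1$. Substituting $\la=\la_*$ and $\hat\beta=1$ into $\hat{\bm g_1}=\bm 0$ and using $\hat D=D+\la_*\,\mathrm{diag}(m_j)$, the $j$-th component reads
$$\sum_{k=1}^n d_{jk}\hat z_k+\la_* m_j\hat z_j+m_j\hat\eta_j-\mathrm{i}\hat h\hat\eta_j-\la_*\alpha_{\la_*}\hat\eta_j^2-\la_*\alpha_{\la_*}e^{-\mathrm{i}\hat\theta}\hat\eta_j^2=0.$$
Writing this as $\hat D\hat{\bm z}+\hat{\bm c}(\hat h,\hat\theta)=\bm 0$, existence of $\hat{\bm z}\in(\hat X_1)_{\mathbb C}$ requires the solvability condition $\langle\hat{\bm\eta},\hat{\bm c}\rangle=0$; by symmetry $\langle\hat{\bm\eta},\hat D\hat{\bm z}\rangle=\langle\hat D\hat{\bm\eta},\hat{\bm z}\rangle=0$, so projecting onto $\hat{\bm\eta}$ drops the dispersal term automatically.

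The decisive step is this projection. The $\hat\eta_j$-weighted sum of $\hat{\bm c}$ is
$$\sum_{j=1}^n m_j\hat\eta_j^2-\mathrm{i}\hat h\sum_{j=1}^n\hat\eta_j^2-\la_*\alpha_{\la_*}\sum_{j=1}^n\hat\eta_j^3-\la_*\alpha_{\la_*}e^{-\mathrm{i}\hat\theta}\sum_{j=1}^n\hat\eta_j^3=0,$$
and here I would insert the value $\alpha_{\la_*}=\left(\sum_j m_j\hat\eta_j^2\right)/\left(\la_*\sum_j\hat\eta_j^3\right)$ from \eqref{lims2}, which cancels the first and third terms and leaves $e^{-\mathrm{i}\hat\theta}\sum_j m_j\hat\eta_j^2=-\mathrm{i}\hat h\sum_j\hat\eta_j^2$. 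Because $s'(\la_*)>0$ gives $\sum_j m_j\hat\eta_j^2>0$ through \eqref{p6}, comparing moduli forces $\hat h=\hat h_{\la_*}=\left(\sum_j m_j\hat\eta_j^2\right)/\left(\sum_j\hat\eta_j^2\right)>0$ and then $e^{-\mathrm{i}\hat\theta}=-\mathrm{i}$, i.e.\ $\hat\theta_{\la_*}=\pi/2$, both unique on the admissible ranges. With these values $e^{-\mathrm{i}\hat\theta_{\la_*}}=-\mathrm{i}$ turns the component equation into exactly \eqref{2z0}, whose right-hand side now meets the solvability condition by construction, so the bijectivity of $\hat D$ on $(\hat X_1)_{\mathbb C}$ yields a unique $\hat{\bm z}_{\la_*}$, completing existence and uniqueness.

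The main obstacle I anticipate is precisely the weighted solvability step. In Case~I the columns of $D$ sum to zero, so the plain sum over $j$ already annihilates the dispersal term; here one must instead project onto the non-uniform Perron vector $\hat{\bm\eta}$ and invoke the symmetry of $\hat D$, and the cancellation that pins down $\hat h_{\la_*}$ and $\hat\theta_{\la_*}$ succeeds only because $\alpha_{\la_*}$ carries exactly the value recorded in \eqref{lims2}. Tracking which identity from Lemmas~\ref{lm2.1} and \ref{lm2.3} forces each cancellation is where care is needed; the remaining steps are routine linear algebra paralleling Lemma~\ref{l25}.
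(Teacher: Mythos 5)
Your proposal is correct and follows essentially the same route as the paper's proof: $\hat g_2=0$ forces $\hat\beta_{\la_*}=1$, the $\hat\eta_j$-weighted solvability condition for the resulting linear system in $\hat{\bm z}$ (using the symmetry of $\hat D$ and the value of $\alpha_{\la_*}$ from \eqref{lims2}) pins down $\hat h_{\la_*}$ and $\hat\theta_{\la_*}=\pi/2$, and bijectivity of $\hat D$ on $(\hat X_1)_{\mathbb C}$ gives the unique $\hat{\bm z}_{\la_*}$. You actually spell out the Fredholm-type projection step more explicitly than the paper does; no gaps.
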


\begin{proof}
A direct computation implies that $\hat g_2(\hat{\bm z},\hat\beta,\la_*)=0$ iff $\hat \beta=\hat \beta_{\la_*}=1$. Note that
\begin{equation}\label{22}
 \alpha_{\la_*}=\displaystyle\frac{\sum_{j=1}^nm_j\hat\eta_j^2}{\la_*\sum_{j=1}^n\hat\eta_j^3}
\end{equation}
from \eqref{lims2}.
Then substituting $\hat\beta=\hat\beta_{\la_*}=1$ into
$\hat{\bm g}_1(\hat{\bm z},\hat\beta,\hat h,\hat\theta,\la_*)=\bm 0$, we see that
\begin{equation}\label{2subequi}
-\sum_{k=1}^n d_{jk}\hat z_k-\la_*m_j\hat z_j=m_j\hat\eta_j-{\rm i}\hat h\hat\eta_j-\la_*\alpha_{\la_*}\hat\eta_j^2-\la_*\alpha_{\la_*}e^{-{\rm i}\hat\theta}\hat\eta_j^2, \;\;j=1,\dots,n.
\end{equation}
Then Eq. \eqref{2subequi} has a solution $(\hat{\bm z},\hat h,\hat \theta)$, where $\hat{\bm z}\in(\hat X_1)_{\mathbb C}$, $\hat h\ge0$, $\hat\theta\in[0,2\pi]$, iff
\begin{equation*}
\sum_{k=1}^n\left(m_j\hat\eta_j^2-\la_*\alpha_{\la_*}\hat\eta_j^3-{\rm i}\hat h\hat\eta_j^2-\la_*\alpha_{\la_*}e^{-{\rm i}\hat\theta}\hat\eta_j^3\right)=0.
\end{equation*}
This, combined with \eqref{22}, implies that
\begin{equation*}
\hat\theta=\hat\theta_{\la_*}=\pi/2,\;\;\hat h=\hat h_{\la_*}.
\end{equation*}
Substituting $\hat h=\hat h_{\la_*}$ and $\hat \theta=\hat \theta_{\la_*}$ into Eq. \eqref{2subequi}, we see that $\hat{\bm z}=\hat {\bm z}_{\la_*}$.
\end{proof}

Then, we can also
show that
$\hat{\bm G}(\hat{\bm z},\hat\beta,\hat h,\hat \theta,\la)=0$  has a unique solution by virtue of the
 implicit function theorem.
\begin{theorem}\label{2cha}
There exist $\hat\la_2>\la_*$ and a continuously differentiable mapping
$\la\mapsto(\hat{\bm z}_\la,\hat\beta_\la,\hat h_\la,\hat\theta_\la)$ from
$[\la_*,\la_2]$ to $(\hat X_1)_{\mathbb C}\times \mathbb{R}^3$ such that
\begin{equation}\label{23.6G} \begin{cases}
\hat{\bm G}(\hat{\bm z},\hat\beta,\hat h,\hat \theta,\la)=\bm 0,\\
\hat{\bm z}\in (\hat X_1)_{\mathbb{C}},\;\hat h>0,\;\hat \beta\ge0, \;\hat \theta\in[0,2\pi)\\
\end{cases}
\end{equation}
has a unique solution $(\hat{\bm z}_\la,\hat \beta_\la,\hat h_\la,\hat \theta_\la)$
for $\la\in[\la_*,\la_2]$.
\end{theorem}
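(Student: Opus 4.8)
The plan is to follow the proof of Theorem~\ref{cha} line for line, now perturbing off the base point at $\la=\la_*$ supplied by Lemma~\ref{2l25} instead of off $\la=0$. Writing $\hat T=(\hat T_1,\hat T_2)^T$ for the Fr\'echet derivative of $\hat{\bm G}$ with respect to $(\hat{\bm z},\hat\beta,\hat h,\hat\theta)$ at $(\hat{\bm z}_{\la_*},\hat\beta_{\la_*},\hat h_{\la_*},\hat\theta_{\la_*},\la_*)$, I would first note that every $(\la-\la_*)$-factor in \eqref{2g1} drops out at $\la=\la_*$, so that differentiating in the directions $(\hat{\bm\chi},\kappa,\epsilon,\vartheta)\in(\hat X_1)_{\mathbb C}\times\mathbb R^3$ gives $\hat T_2(\hat{\bm\chi},\kappa,\epsilon,\vartheta)=2\kappa\|\hat{\bm\eta}\|_2^2$ together with
\[
\hat T_1(\hat{\bm\chi},\kappa,\epsilon,\vartheta)=\hat D\hat{\bm\chi}+\kappa\,\bm a-{\rm i}\epsilon\,\hat{\bm\eta}+\vartheta\,\la_*\alpha_{\la_*}\,{\rm diag}(\hat\eta_j)\hat{\bm\eta},
\]
where $\bm a$ collects the $\hat\beta$-terms and I have used $\hat\theta_{\la_*}=\pi/2$, i.e. $e^{-{\rm i}\hat\theta_{\la_*}}=-{\rm i}$, from \eqref{2lastar}. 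The whole argument then reduces to showing that $\hat T$ is a bijection from $(\hat X_1)_{\mathbb C}\times\mathbb R^3$ onto $\mathbb C^n\times\mathbb R$.

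I would prove bijectivity using the splitting $\mathbb C^n={\rm span}_{\mathbb C}\{\hat{\bm\eta}\}\oplus(\hat X_1)_{\mathbb C}$. The scalar equation $\hat T_2=c$ determines $\kappa$ uniquely. Next, since $\hat D=D+\la_*\,{\rm diag}(m_j)=\la_*Q+D$ is symmetric (assumption $(A_1)$) and, by Lemma~\ref{lm2.1}, has $0$ as a simple eigenvalue with eigenvector $\hat{\bm\eta}$, its restriction $\hat D\colon(\hat X_1)_{\mathbb C}\to(\hat X_1)_{\mathbb C}$ is invertible with range exactly $(\hat X_1)_{\mathbb C}=\hat{\bm\eta}^\perp$ (this is the fact underlying Lemma~\ref{2lem21}). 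Hence, for a target $\bm b\in\mathbb C^n$, the part of $\hat T_1=\bm b$ lying in $(\hat X_1)_{\mathbb C}$ is solved uniquely for $\hat{\bm\chi}$ once $\epsilon,\vartheta$ are fixed, and the only remaining constraint is the projection onto $\hat{\bm\eta}$ via $\langle\hat{\bm\eta},\cdot\rangle$. The crux — the one step genuinely new relative to Case~I — is that this $\hat{\bm\eta}$-projection is a nondegenerate real-linear map in $(\epsilon,\vartheta)$: the coefficient of $\epsilon$ projects to $\langle\hat{\bm\eta},-{\rm i}\hat{\bm\eta}\rangle=-{\rm i}\|\hat{\bm\eta}\|_2^2$, which is purely imaginary, while the coefficient of $\vartheta$ projects to $\la_*\alpha_{\la_*}\sum_{j=1}^n\hat\eta_j^3>0$, which is real and positive because $\alpha_{\la_*}>0$ by Lemma~\ref{lm2.3}. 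These two complex numbers are $\mathbb R$-linearly independent, so $(\epsilon,\vartheta)$ is uniquely determined; therefore $\hat T$ is bijective, and the implicit function theorem yields $\hat\la_2>\la_*$ and a $C^1$ branch $\la\mapsto(\hat{\bm z}_\la,\hat\beta_\la,\hat h_\la,\hat\theta_\la)$ solving \eqref{23.6G} with the prescribed value at $\la_*$.

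Finally, for global uniqueness on $[\la_*,\hat\la_2]$ I would transcribe the a~priori-bound argument of Theorem~\ref{cha}: given any solution $(\hat{\bm z}^\la,\hat\beta^\la,\hat h^\la,\hat\theta^\la)$ of \eqref{23.6G}, Lemma~\ref{2nu} bounds $\hat h^\la=\hat\nu_\la/(\la-\la_*)$, while $\hat\beta^\la$ and $\hat\theta^\la$ are bounded by construction; pairing the first equation of \eqref{2g1} with $\hat{\bm z}^\la$ and invoking the coercivity $|\langle\hat{\bm z}^\la,\hat D\hat{\bm z}^\la\rangle|\ge\hat\gamma_2\|\hat{\bm z}^\la\|_2^2$ from Lemma~\ref{2lem21} then bounds $\{\hat{\bm z}^\la\}$ in $\mathbb C^n$ once $\hat\la_2$ is chosen close enough to $\la_*$. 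Passing to any subsequence $\la_{n_k}\downarrow\la_*$, the limit satisfies \eqref{23.5G}, so the uniqueness in Lemma~\ref{2l25} forces it to equal $(\hat{\bm z}_{\la_*},\hat\beta_{\la_*},\hat h_{\la_*},\hat\theta_{\la_*})$; as every subsequential limit coincides, $(\hat{\bm z}^\la,\hat\beta^\la,\hat h^\la,\hat\theta^\la)\to(\hat{\bm z}_{\la_*},\hat\beta_{\la_*},\hat h_{\la_*},\hat\theta_{\la_*})$ as $\la\downarrow\la_*$, which places the solution on the implicit-function branch and completes the uniqueness. Apart from the nondegeneracy of the $(\epsilon,\vartheta)$-block of $\hat T$, the entire proof is just the Case~I argument with $\la_*$ in place of $0$, $\hat D$ in place of $D$, and $\hat{\bm\eta}$ in place of $\bm\eta$.
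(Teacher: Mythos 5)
Your proof is correct and is precisely the argument the paper intends: the paper omits the proof of Theorem \ref{2cha}, stating only that it follows from the implicit function theorem by the same arguments as in Theorem \ref{cha}, and you have transplanted that argument faithfully, including the one genuinely new computation (the $\mathbb R$-linear independence of the $\epsilon$- and $\vartheta$-coefficients $-{\rm i}\|\hat{\bm\eta}\|_2^2$ and $\la_*\alpha_{\la_*}\sum_{j=1}^n\hat\eta_j^3>0$ in the $\hat{\bm\eta}$-projection). No gaps.
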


It follows from Theorem \ref{2cha} that:
\begin{theorem}\label{2c25}
Let $\Delta(\la,\mu,\tau)$ be defined as in \eqref{triangle}. Then for $\la\in(\la_*,\hat \la_2],$ $(\hat\nu,\hat \tau,\hat{\bm \psi})$ solves
\begin{equation*}
\begin{cases}
\Delta(\la,{\rm i}\hat \nu,\hat \tau)\hat{\bm \psi}=\bm 0,\\
\hat \nu>0,\;\hat \tau\ge0,\;\hat{\bm \psi} (\ne \bm 0) \in \mathbb C^n,\\
\end{cases}
\end{equation*}
if and
only if
\begin{equation}\label{2par}
\hat \nu=\hat \nu_\la=(\la -\la_*)\hat h_\la,\;\hat{\bm \psi}= a{\bm \psi}_\la,\;
\hat \tau=\hat \tau_{l}=\frac{\hat\theta_\la+2l\pi}{\hat\nu_\la},\;\; l=0,1,2,\cdots,
\end{equation}
where $\hat{\bm \psi}_\la=\hat\beta_\la\hat{\bm \eta}+(\la -\la_*)\hat{\bm z}_\la$,
$a$ is a nonzero constant, and
$(\hat{\bm z}_\la,\hat\beta_\la,\hat h_\la,\hat \theta_\la)$ is defined as in Theorem
\ref{2cha}.
\end{theorem}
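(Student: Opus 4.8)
The plan is to mirror the reduction used for Theorem~\ref{c25} in the case $\delta>0$, transferring the purely imaginary eigenvalue problem for $\Delta(\la,\mathrm i\hat\nu,\hat\tau)$ to the scaled system \eqref{2g1} and then appealing to the uniqueness supplied by Theorem~\ref{2cha}. First I would observe that, by \eqref{triangle}, the matrix $\Delta(\la,\mathrm i\hat\nu,\hat\tau)$ depends on $\hat\tau$ only through the factor $e^{-\mathrm i\hat\nu\hat\tau}$; hence, writing $\hat\theta\in[0,2\pi)$ for the reduction of $\hat\nu\hat\tau$ modulo $2\pi$, the condition $\Delta(\la,\mathrm i\hat\nu,\hat\tau)\hat{\bm\psi}=\bm0$ with $\hat\nu>0$, $\hat\tau\ge0$ and $\hat{\bm\psi}\ne\bm0$ is exactly Eq.~\eqref{eigen} for the triple $(\hat\nu,\hat\theta,\hat{\bm\psi})$. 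This reduces the theorem to characterizing the solutions of \eqref{eigen} for $\la\in(\la_*,\hat\la_2]$.

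For the \lq\lq only if\rq\rq\ direction, I would take any solution $(\hat\nu,\hat\theta,\hat{\bm\psi})$ of \eqref{eigen} and put $\hat{\bm\psi}$ into the normalized form \eqref{2eigen2}. Using the splitting $\mathbb C^n=\mathrm{span}\{\hat{\bm\eta}\}\oplus(\hat X_1)_{\mathbb C}$, write $\hat{\bm\psi}=c\hat{\bm\eta}+\hat{\bm w}$; multiplying by a unit-modulus constant to render the $\hat{\bm\eta}$-component real and nonnegative and then rescaling so that $\|\hat{\bm\psi}\|_2=\|\hat{\bm\eta}\|_2$ produces, up to the scalar factor $a$ that the statement absorbs, the representation $\hat{\bm\psi}=\hat\beta\hat{\bm\eta}+(\la-\la_*)\hat{\bm z}$ with $\hat\beta\ge0$ and $\hat{\bm z}\in(\hat X_1)_{\mathbb C}$. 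By Lemma~\ref{2nu} the quotient $\hat\nu/(\la-\la_*)$ is bounded, so I may set $\hat h:=\hat\nu/(\la-\la_*)$; substituting $\hat\nu=(\la-\la_*)\hat h$ together with this decomposition into \eqref{eigen} turns that equation into $\hat{\bm g_1}(\hat{\bm z},\hat\beta,\hat h,\hat\theta,\la)=\bm0$, while the normalization is precisely $\hat g_2=0$, i.e. $\hat{\bm G}(\hat{\bm z},\hat\beta,\hat h,\hat\theta,\la)=\bm0$. The uniqueness in Theorem~\ref{2cha} then forces $(\hat{\bm z},\hat\beta,\hat h,\hat\theta)=(\hat{\bm z}_\la,\hat\beta_\la,\hat h_\la,\hat\theta_\la)$, whence $\hat\nu=\hat\nu_\la=(\la-\la_*)\hat h_\la$, $\hat{\bm\psi}=a\hat{\bm\psi}_\la$, and $\hat\nu\hat\tau=\hat\theta_\la+2l\pi$ for some integer $l\ge0$, which is exactly \eqref{2par}.

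The \lq\lq if\rq\rq\ direction is a direct verification: given \eqref{2par}, the identity $\hat{\bm G}(\hat{\bm z}_\la,\hat\beta_\la,\hat h_\la,\hat\theta_\la,\la)=\bm0$ from Theorem~\ref{2cha} already encodes $\hat{\bm g_1}=\bm0$, and since $e^{-\mathrm i\hat\nu_\la\hat\tau_l}=e^{-\mathrm i(\hat\theta_\la+2l\pi)}=e^{-\mathrm i\hat\theta_\la}$, substituting back shows that $(\hat\nu_\la,\hat\tau_l,a\hat{\bm\psi}_\la)$ solves the original problem. The step I expect to need the most care is the reduction itself: one must confirm that $\hat h_\la>0$ on all of $(\la_*,\hat\la_2]$ — so that $\hat\nu_\la=(\la-\la_*)\hat h_\la>0$ — which follows from $\hat h_{\la_*}=\sum_{j=1}^n m_j\hat\eta_j^2\big/\sum_{j=1}^n\hat\eta_j^2>0$ (Lemma~\ref{2l25}, using $s'(\la_*)>0$ from Lemma~\ref{lm2.3}) together with continuity, shrinking $\hat\la_2$ if necessary. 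One must also justify that the scalar and phase normalization can always be carried out and that it cannot force $\hat\beta=0$: any eigenfunction reduces to a solution of $\hat{\bm G}=\bm0$, and the unique such solution has $\hat\beta_\la$ close to $\hat\beta_{\la_*}=1\ne0$, so the degenerate case $c=0$ is automatically excluded by Theorem~\ref{2cha}.
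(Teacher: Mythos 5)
Your argument is exactly the one the paper intends: the reduction of $\Delta(\la,\mathrm i\hat\nu,\hat\tau)\hat{\bm\psi}=\bm 0$ to Eq.~\eqref{eigen} via $\hat\theta=\hat\nu\hat\tau \bmod 2\pi$, the normalized decomposition \eqref{2eigen2} with $\hat h=\hat\nu/(\la-\la_*)$ turning \eqref{eigen} into $\hat{\bm G}=\bm 0$, and then the existence--uniqueness of Theorem~\ref{2cha} (the paper states the theorem without proof, as the direct analogue of Theorem~\ref{c25}, and your write-up correctly supplies the omitted details, including the positivity of $\hat h_\la$). The proposal is correct and follows the paper's approach.
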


We can also show that ${\rm i}\hat \nu_\la$ is simple and the transversality condition holds as in the case of $\delta>0$.
Then the stability and the existence of Hopf bifurcation can be summarized as follows.
%\begin{theorem}\label{thm36}
%For
%$r\in(r_*,r^*]$, the infinitesimal generator
%$A_\tau(r)$ has exactly $2(n+1)$ eigenvalues with positive
%real parts when $\tau\in(\tau_{n},\tau_{n+1}],\
%n=0,1,2,\cdots.$
%\end{theorem}
\begin{theorem}\label{2thm37}
Suppose that $(A_1)$ and $(A_2)$ hold, and $\delta<0$. Then
for any $\la\in(\la_*,\hat\la_2]$, where $0<\hat\la_2-\la_*\ll 1$,
the unique positive steady state $\bm u_\la$ of \eqref{ndif}
is locally asymptotically stable when $\tau\in[0,\hat \tau_{0})$, and
unstable when $\tau\in(\hat \tau_{0},\infty)$. Moreover, when $\tau=\hat\tau_0$, system \eqref{ndif} occurs Hopf bifurcation at $\bm u_{\la}$.
\end{theorem}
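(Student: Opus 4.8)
The plan is to mirror the assembly that produces Theorem~\ref{thm37} in Case~I, now feeding in the Case~II ingredients: the eigenvalue characterization of Theorem~\ref{2c25}, together with the simplicity of $\mathrm{i}\hat\nu_\la$ and the transversality condition, both of which hold by the same reasoning as Theorems~\ref{thm34a} and~\ref{thm35}. Concretely, I would run an eigenvalue-counting argument for the infinitesimal generator $A_{\hat\tau}(\la)$ as $\hat\tau$ increases from $0$, and close with the Hopf bifurcation theorem for functional differential equations.

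First I would pin down the baseline stability at $\tau=0$. For $\la\in(\la_*,\hat\la_2]$ the equilibrium satisfies $(D+\la Q-\la U)\bm u_\la=\bm 0$ with $U={\rm diag}(u_{\la,j})$ and $\bm u_\la\gg\bm 0$, so the Perron--Frobenius theorem forces $s(D+\la Q-\la U)=0$ with $\bm u_\la$ as Perron vector. The Jacobian of \eqref{ndif} at $\bm u_\la$ for $\tau=0$ is $J:=D+\la Q-2\la U=(D+\la Q-\la U)-\la U$, and since $\bm u_\la\gg\bm 0$ the strictly negative diagonal perturbation $-\la U$ lowers the spectral bound, giving $s(J)<0$; thus every eigenvalue of $A_0(\la)$ has negative real part and $\bm u_\la$ is locally asymptotically stable at $\tau=0$, consistent with Lemma~\ref{lm2.2}$(ii)$. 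The very same matrix appears as $\Delta(\la,0,\tau)=J$ for every $\tau$, so $\mu=0$ is never an eigenvalue of $A_{\hat\tau}(\la)$ and no crossing can occur through the origin.

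Next I would invoke Theorem~\ref{2c25}: for fixed $\la\in(\la_*,\hat\la_2]$ the only values $\hat\tau\ge0$ at which $A_{\hat\tau}(\la)$ carries a purely imaginary eigenvalue $\mathrm{i}\hat\nu$ ($\hat\nu>0$) are the discrete set $\hat\tau=\hat\tau_l$, and at each such point $\mathrm{i}\hat\nu_\la$ is simple. Because the eigenvalues of $A_{\hat\tau}(\la)$ vary continuously with $\hat\tau$ and cross the imaginary axis only on $\{\hat\tau_l\}$ (and never at $0$), the number $N(\hat\tau)$ of eigenvalues with positive real part is constant on each interval $(\hat\tau_{l-1},\hat\tau_l)$. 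Simplicity together with $\frac{d\,\mathcal{Re}[\mu(\hat\tau_l)]}{d\tau}>0$ guarantees that at each $\hat\tau_l$ exactly one conjugate pair crosses from left to right, so $N$ jumps by $2$ there. With $N(0)=0$ this yields $N(\hat\tau)=0$ on $[0,\hat\tau_0)$ and $N(\hat\tau)\ge2$ on $(\hat\tau_0,\infty)$, which is precisely the asserted local asymptotic stability and instability; and at $\hat\tau_0$ the simple purely imaginary eigenvalue plus transversality verify the hypotheses of the Hopf bifurcation theorem, producing the bifurcation at $\bm u_\la$.

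The main obstacle is the bookkeeping of the crossing argument rather than any isolated computation: one must be sure that $\{\hat\tau_l\}$ exhausts \emph{all} imaginary-axis crossings over the entire half-line $\hat\tau\in[0,\infty)$, that each crossing is a clean transversal pair with no higher-order tangency (secured by simplicity and $\frac{d}{d\tau}\mathcal{Re}[\mu]>0$), and that the baseline count is genuinely $N(0)=0$ rather than merely nonpositive. The delicate structural point is that Theorem~\ref{2c25}, and hence the whole conclusion, is available only for $0<\la-\la_*\ll1$, which is exactly why the statement is restricted to $\la\in(\la_*,\hat\la_2]$.
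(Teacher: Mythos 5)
Your proposal is correct and takes essentially the same route as the paper, which proves this theorem by transplanting the Case~I machinery (Theorem~\ref{2c25} plus the simplicity and transversality arguments of Theorems~\ref{thm34a} and~\ref{thm35}) into the standard eigenvalue-crossing assembly. The only genuine addition is your explicit Perron--Frobenius verification that $s(D+\la Q-2\la U)<0$ gives the baseline count $N(0)=0$; the paper instead reads this off from the global asymptotic stability at $\tau=0$ in Lemma~\ref{lm2.2}$(ii)$.
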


Then, for the equivalent model \eqref{patc}, we have the following results.
\begin{theorem}\label{2equivthm37}
Suppose that $(A_1)$ and $(A_2)$ hold, and $\delta<0$. Then the following results statements hold.
\begin{enumerate}
\item [$(i)$] For any $d\in(0,d_*)$, where $d_*=1/\la_*$, model \eqref{patc} admits a unique positive equilibrium $\bm u^d$.
\item [$(ii)$] For any  $d\in(\hat d_1,d_*)$, where $0<d_*-\hat d_1\ll 1$, there exists $\hat r_0>0$ such
that
the unique positive steady state $\bm u^d$ of \eqref{patc}
is locally asymptotically stable when $r\in[0,\hat r_0)$, and
unstable when $r\in(\hat r_{0},\infty)$. Moreover, when $r=\hat r_0$, system \eqref{patc} occurs Hopf bifurcation at $\bm u^{d}$.
\end{enumerate}
\end{theorem}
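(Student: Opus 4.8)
The plan is to read off both statements from the results already established for the rescaled model \eqref{ndif}, using the equivalence $\tilde t=dt$, $\la=1/d$, $\tau=dr$ between \eqref{patc} and \eqref{ndif}.

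For part $(i)$, I would first note that \eqref{patc} and \eqref{ndif} share exactly the same positive equilibria: a vector $\bm u^d$ solves the steady-state equation $d\sum_{k=1}^n d_{jk}u_k+u_j(m_j-u_j)=0$ of \eqref{patc} if and only if it solves \eqref{ndifsteady} with $\la=1/d$, as one sees by multiplying \eqref{ndifsteady} by $d$. Since $d\in(0,d_*)$ with $d_*=1/\la_*$ is precisely the condition $\la=1/d>\la_*$, Lemma \ref{lm2.2}$(ii)$ applies and yields a unique positive equilibrium $\bm u_\la=\bm u^d$ for every such $d$, giving $(i)$.

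For part $(ii)$, the key observation is that the rescaling $\tilde t=dt$ carries the linearization of \eqref{patc} about $\bm u^d$ into the linearization \eqref{linear} of \eqref{ndif} about $\bm u_\la$, with delays linked by $\tau=dr$. Hence $\tilde\mu$ is an eigenvalue of the characteristic problem for \eqref{patc} at delay $r$ if and only if $\mu=\tilde\mu/d$ is an eigenvalue of $\Delta(\la,\mu,\tau)$ at $\tau=dr$; because $d>0$, the signs of $\mathcal{Re}\,\tilde\mu$ and $\mathcal{Re}\,\mu$ agree, so local asymptotic stability and instability transfer verbatim. Taking $\hat\la_2$ as in Theorem \ref{2thm37} and setting $\hat d_1:=1/\hat\la_2$, the admissible range $d\in(\hat d_1,d_*)$ corresponds to $\la\in(\la_*,\hat\la_2)\subset(\la_*,\hat\la_2]$, and $0<\hat\la_2-\la_*\ll1$ gives $0<d_*-\hat d_1\ll1$. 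Defining $\hat r_0:=\hat\tau_0/d=\la\hat\tau_0$, Theorem \ref{2thm37} then shows $\bm u^d$ is locally asymptotically stable for $r\in[0,\hat r_0)$ and unstable for $r\in(\hat r_0,\infty)$.

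It remains to confirm that $r=\hat r_0$ is a genuine Hopf bifurcation value for \eqref{patc}, for which I would transfer the simplicity of the purely imaginary eigenvalue and the transversality condition built into Theorem \ref{2thm37}. Writing $\tilde\mu(r)=d\,\mu(dr)$ and differentiating through $\tau=dr$ gives $\frac{d\mathcal{Re}[\tilde\mu]}{dr}=d^2\,\frac{d\mathcal{Re}[\mu]}{d\tau}>0$, so the strict crossing of the imaginary axis survives the rescaling. I expect the only real care to be needed in the bookkeeping of the two simultaneous rescalings, of the time variable and of the bifurcation parameter, and in checking that the interval endpoints $\hat d_1$ and $\hat r_0$ are the exact images of $\hat\la_2$ and $\hat\tau_0$; beyond that, the statement is an immediate corollary of Lemma \ref{lm2.2}$(ii)$ and Theorem \ref{2thm37}.
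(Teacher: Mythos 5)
Your proposal is correct and follows essentially the same route as the paper, which states Theorem \ref{2equivthm37} as an immediate consequence of the equivalence $\tilde t=dt$, $\la=1/d$, $\tau=dr$ between \eqref{patc} and \eqref{ndif} together with Lemma \ref{lm2.2}$(ii)$ and Theorem \ref{2thm37}; you merely make explicit the transfer of equilibria, eigenvalues, simplicity, and transversality under the rescaling. The bookkeeping ($\hat d_1=1/\hat\la_2$, $\hat r_0=\la\hat\tau_0$, $\mathcal{Re}\,\tilde\mu=d\,\mathcal{Re}\,\mu$) is exactly what is needed and is carried out correctly.
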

\begin{remark}
Note that $\tau=dr$ and $\la=1/d$, where parameters $d, r$ are in \eqref{patc}, and parameters $\la,\tau$ are in \eqref{ndif}. For this case, we see from Lemma \ref{2l25}
and Theorems \ref{2cha} and \ref{2c25} that:
\begin{equation*}
\lim_{d\to d_*} (d_*-d)\hat r_0=\lim_{\la\to\la_*}\displaystyle\frac{(\la-\la_*)\hat\tau_0}{\la_*}=\displaystyle\frac{d_*\pi}{2\hat h_{\la_*}},
\end{equation*}
which implies that $\lim_{d\to d_*} r_0=\infty$. This case is different from the case of $\delta>0$.

\end{remark}
\begin{remark}
By the similar arguments as that in \cite{ChenLouWei}, we can also compute that the direction of the Hopf bifurcation at $r=\hat r_0$ is forward and the bifurcating periodic solution from $r=\hat r_0$ is orbitally asymptotically stable. Here we omit the details.
\end{remark}

\section{Small dispersal rate}
In Section 2, we show the existence of Hopf bifurcation of model \eqref{patc} when the dispersal rate $d$ is large (respectively, $d$ is near $d_*=1/\la_*$) for $\delta>0$ (respectively, $\delta<0$).
In this section, we consider the case that $d$ is small.
Here we need another assumption:
\begin{enumerate}
\item [($A'_3$)] $m_j\ne 0$ for any $j=1,\dots,n$.
\end{enumerate}
Without loss of generality, $(A_2)$ and $(A'_3)$ can be replaced by the following assumption:
\begin{enumerate}
\item [($A_3$)] $m_j> 0$ for $j=1,\dots,p$, and $m_j<0$ for $j=p+1,\dots,n$, where $1\le p\le n$.
\end{enumerate}
Therefore, we assume that $(A_1)$ and $(A_3)$ hold throughout this section. It follows from Lemma \ref{lm2.3} that model \eqref{patc} admits a unique positive equilibrium
$\bm u^{d}=(u_1^d,\dots,u_n^d)^T\gg\bm 0$ for $d\in(0,\hat d)$, where
\begin{equation}\label{hatd}
\hat d=\begin{cases}
\infty, &\text{if}\;\; \delta\ge0,\\
d_*=1/\lambda_*,&\text{if}\;\; \delta<0.
\end{cases}
\end{equation}
 Moreover, we denote $(x)_+=0$ if $x\le 0$ and $(x)_+=x$ if $x>0$.

We firstly show the asymptotic profile of $\bm u^{d}$ as $d\to0$.
\begin{lemma}\label{asymp}
Assume that $(A_1)$ and $(A_3)$ hold, and let $\bm u^{d}=(u^d_{1},\dots,u^d_{n})\gg\bm0$ be the unique positive equilibrium of model \eqref{patc} for $d\in(0,\hat d)$, where $\hat d$ is defined as in \eqref{hatd}. Then
$\lim_{d\to0} u^d_{j}=(m_j)_+$ for any $j=1,\dots,n$. Moreover, let
$$\bm u^{0}=((m_1)_+,\dots,(m_n)_+),$$ and
$\bm u^{d}$ is continuously differentiable for $d\in[0,\hat d)$.
\end{lemma}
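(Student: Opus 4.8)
The plan is to regard $\bm u^{d}$ as the positive solution of the steady-state system associated with \eqref{patc}, namely
\[
d\sum_{k=1}^n d_{jk}u_k^{d}+u_j^{d}(m_j-u_j^{d})=0,\qquad j=1,\dots,n,
\]
which is precisely \eqref{ndifsteady} with $\la=1/d$; hence the existence and uniqueness of $\bm u^{d}\gg\bm 0$ for $d\in(0,\hat d)$ is already guaranteed by Lemma \ref{lm2.2}. I would prove the two assertions separately: first the limit $u_j^{d}\to(m_j)_+$, and then the $C^1$-dependence on $[0,\hat d)$.

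For the convergence I would first establish a bound on $\bm u^{d}$ uniform in $d$. Summing the steady-state equations and using that each column of $D$ sums to zero (a consequence of the symmetry in $(A_1)$) annihilates the dispersal term and leaves $\sum_j(u_j^{d})^2=\sum_j m_j u_j^{d}$; Cauchy--Schwarz then gives $\|\bm u^{d}\|_2\le\|\bm m\|_2$ with $\bm m=(m_1,\dots,m_n)^T$, independently of $d$. With this bound in hand, along any sequence $d_n\to0$ I can extract a subsequence with $\bm u^{d_n}\to\bm u^{*}\ge\bm 0$, and since the dispersal term is $O(d)$ I may pass to the limit to get $u_j^{*}(m_j-u_j^{*})=0$; thus $u_j^{*}\in\{0,m_j\}$, and on the unfavorable patches ($m_j<0$) this forces $u_j^{*}=0=(m_j)_+$.

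The crux, and what I expect to be the main obstacle, is excluding $u_j^{*}=0$ on a favorable patch $j\le p$, i.e.\ showing the population is not drained away as dispersal vanishes. Here the sign pattern of $D$ enters: writing $d_{jj}=-\sum_{k\ne j}d_{jk}$ and discarding the nonnegative terms $\sum_{k\ne j}d_{jk}u_k^{d}\ge0$, the $j$-th equation gives, after division by $u_j^{d}>0$, the pointwise lower bound $u_j^{d}\ge m_j-d\sum_{k\ne j}d_{jk}$. Letting $d\to0$ forces $u_j^{*}\ge m_j>0$, so $u_j^{*}=m_j=(m_j)_+$. Every subsequential limit therefore equals $\bm u^{0}$, which together with the uniform bound yields $\bm u^{d}\to\bm u^{0}$.

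For the smoothness I would apply the implicit function theorem to $G_j(\bm u,d):=d\sum_k d_{jk}u_k+u_j(m_j-u_j)$. At $(\bm u^{0},0)$ the Jacobian $D_{\bm u}\bm G$ reduces to the diagonal matrix $\mathrm{diag}(m_j-2u_j^{0})$, whose entries are $-m_j<0$ for $j\le p$ and $m_j<0$ for $j>p$, hence nonsingular; this yields a $C^1$ curve $\tilde{\bm u}^{d}$ with $\tilde{\bm u}^{0}=\bm u^{0}$, and the convergence just proved together with the local uniqueness in the implicit function theorem identifies $\bm u^{d}=\tilde{\bm u}^{d}$ for small $d>0$. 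On the interior $(0,\hat d)$ the Jacobian $dD+\mathrm{diag}(m_j-2u_j^{d})$ is again nonsingular: the matrix $dD+\mathrm{diag}(m_j-u_j^{d})$ annihilates the positive vector $\bm u^{d}$, so its spectral bound is $0$ by the Perron--Frobenius property, and subtracting the positive diagonal $\mathrm{diag}(u_j^{d})$ strictly lowers the spectral bound below $0$. Thus $\bm u^{d}$ is $C^1$ on $(0,\hat d)$ as well, and the interior and boundary pieces glue to give $C^1$ regularity on $[0,\hat d)$.
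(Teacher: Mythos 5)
Your proof is correct, and it reaches the identification of the implicit-function-theorem branch with $\bm u^{d}$ by a genuinely different route than the paper. The paper goes straight to the IFT at $(\bm u^{0},0)$ (your Jacobian computation $D_{\bm u}\bm F(0,\bm u^{0})=\mathrm{diag}(m_j-2u_j^{0})$ is exactly theirs) and then identifies the resulting curve $\tilde{\bm u}(d)$ with $\bm u^{d}$ by arguing that $\tilde{\bm u}(d)$ stays nonnegative for small $d>0$ --- namely by computing $\tilde{\bm u}'(0)=-\mathrm{diag}(1/\delta_j)D\bm u^{0}$ and checking $\tilde u_j'(0)\ge 0$ on the patches where $\tilde u_j(0)=0$ --- and then invoking uniqueness of the positive equilibrium. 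You instead prove the convergence $\bm u^{d}\to\bm u^{0}$ \emph{first}, via the a priori bound $\|\bm u^{d}\|_2\le\|\bm m\|_2$ (the column sums of $D$ do vanish, by the very definition $d_{jj}=-\sum_{k\ne j}d_{kj}$, so the telescoping is legitimate), the subsequential limit argument, and the pointwise lower bound $u_j^{d}\ge m_j-d\sum_{k\ne j}d_{jk}$ obtained by discarding the incoming terms; the identification $\bm u^{d}=\tilde{\bm u}^{d}$ then falls out of the local uniqueness in the IFT. This buys you two things: the limit statement is established independently rather than as a byproduct of the identification, and you sidestep the paper's nonnegativity step, which is slightly delicate when $\tilde u_j'(0)=0$ (e.g.\ an unfavorable patch with no favorable neighbors). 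Your closing argument for interior regularity --- $s\bigl(dD+\mathrm{diag}(m_j-u_j^{d})\bigr)=0$ by Perron--Frobenius applied to the positive null vector $\bm u^{d}$, hence $s\bigl(dD+\mathrm{diag}(m_j-2u_j^{d})\bigr)\le -\min_j u_j^{d}<0$ --- is also a clean, explicit substitute for the paper's citation of its Lemma 2.3.
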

\begin{proof}
Define \begin{equation*}
\begin{split}
&F(d,\bm u)=
\left(\begin{array}{c}
d\sum_{k=1}^nd_{1k}u_k+u_1(m_1-u_1)
\\
d\sum_{k=1}^nd_{2k}u_k+u_2(m_2-u_2) \\
\vdots\\
d\sum_{k=1}^nd_{nk}u_k+u_n(m_n-u_n) \\
\end{array}\right),
\end{split}
\end{equation*}
and $\bm u^*=(u_1^*,\dots,u_n^*)$, where $u_j^*=(m_j)_+$ for any $j=1,\dots,n$.

Clearly, $F(0,\bm u^*)=\bm 0$, and $D_{\bm {u}} \bm F(0,\bm {u}^{*})=\textrm {diag}(\delta_j)$, where $D_{\bm {u}} \bm F(0,\bm {u}^{*})$ is the Fr\'echet derivative of $F(d,\bm u)$ with respect to $u$ at $(0,\bm {u}^{*})$, and
\begin{equation*}
\delta_j=\begin{cases} -m_j,&j=1,\dots,p,\\
m_j,&\;\;j=p+1,\dots,n.
\end{cases}
\end{equation*}
Therefore, $D_{\bm {u}} \bm F(0,\bm {u}^{*})$ is invertible. It follows from the implicit function theorem that
there exist $ d_1>0$ and a continuously differentiable mapping
$$d\in[0, d_1]\mapsto \tilde {{\bm u}}(d)=({\tilde u}_1(d),\dots,{\tilde u}_n(d))^T\in\mathbb R^n$$ such that $\bm F(d,\tilde {{\bm u}}(d))=\bm 0$ and
$\tilde {{\bm u}}(0)=\bm u^*$.

Taking the derivative of $\bm F(d,\tilde{{\bm  u}}(d))=\bm 0$ with respect to $d$ at $d=0$, we have
\begin{equation*}
-\textrm {diag}(\delta_j)\left(\begin{array}{c} {\tilde u}'_1(0)\\
 {\tilde u}'_2(0)\\
 \vdots\\
  {\tilde u}'_n(0)\end{array}\right)=D\left(\begin{array}{c} u^*_1\\
 u^*_2\\
 \vdots\\
  u^*_n\end{array}\right).
\end{equation*}
Then
\begin{equation*}
\left(\begin{array}{c} {\tilde u}'_1(0)\\
 {\tilde u}'_2(0)\\
 \vdots\\
  {\tilde u}'_n(0)\end{array}\right)=-\textrm {diag}(1/\delta_j)D\left(\begin{array}{c} u^*_1\\
 u^*_2\\
 \vdots\\
  u^*_n\end{array}\right).
\end{equation*}
Since  $\tilde {\bm u}(0)>\bm 0$, then ${\tilde u}'_j(0)\ge0$ if $\tilde u_j(0)=0$, which implies
$\tilde{\bm u}(d)$ is the nonnegative equilibrium of model \eqref{patc} for small $d$. It follows from Lemma \ref{lm2.3} that $\tilde {\bm u}(d)=\bm u^d$, $\lim_{d\to0} u_j^d=(m_j)_+$ for any $j=1,\dots,n$, and $\bm u^{d}$ is continuously differentiable for $d\in[0,\infty)$.
\end{proof}
As in Section 2, linearizing \eqref{patc} at $\bm{u}^d$, we have
\begin{equation}
\label{2linear}
\displaystyle\frac{d \bm {v}}{d t} =dD\bm{ v}+{\rm diag}\left(m_j-u^d_{j}\right)\bm v- {\rm diag}( u^d_{j})\bm{ v}(t-r).
\end{equation}
Then the infinitesimal generator $\tilde A_r(d)$ of the solution semigroup of
\eqref{2linear} is defined by
\begin{equation}\label{2Ataula}
\tilde A_r(d) \bm{\Psi}=\dot{\bm{\Psi}},\end{equation}
and the domain of $\tilde A_r(d)$ is
\begin{equation*}
\begin{split}
 &\mathscr{D}(\tilde A_r(d)) = \big\{\bm {\Psi}\in C_\mathbb{C}
\cap C^1_\mathbb{C}:\ \bm \Psi(0)\in \mathbb{C}^n,\dot{\bm \Psi}(0)=dD\bm{\Psi}(0)+\textrm{diag}\left(m_j-u^d_{j}\right)\bm{ \Psi}(0)\\
&~~~~~~~~~~~~~~~~~-\textrm{ diag}( u^d_{j})\bm{ \Psi}(-r) \big\},
\end{split}
\end{equation*}
where $C_{\mathbb C}=C([-r,0],\mathbb{C}^n)$ and $C^1_\mathbb{C}=C^1([-r,0],\mathbb{C}^n)$. Then, $\mu\in\mathbb{C}$ is an eigenvalue of $\tilde A_r(d)$ iff there exists $\bm{\psi}=(\psi_1,\dots,\psi_n)^T(\ne\bm 0)\in\mathbb{C}^n$ such that $\tilde \Delta(d,\mu,r)\bm \psi=\bm 0$,
where
\begin{equation}\label{2triangle}
\begin{split}
&\tilde\Delta(d,\mu,r)\bm{\psi}
:=dD\bm{\psi}+ \textrm{diag}\left(m_j-u^d_{j}\right)\bm \psi- e^{-\mu r}\textrm{diag}( u^d_{j})\bm{ \psi}-\mu\bm{\psi}.
\end{split}
\end{equation}
Therefore, $\tilde A_r(d)$ has a purely imaginary eigenvalue $\mu=\textrm i\nu\
(\nu>0)$ for some $r\ge0$, iff
\begin{equation}\label{2eigen}
\begin{split}
H(d,\nu,\theta, \bm\psi):=dD\bm{\psi}+\textrm{diag}\left(m_j-u^d_{j}\right)\bm \psi-e^{-\textrm i\theta}\textrm{diag}( u^d_{j})\bm{ \psi}-\textrm{i}\nu\bm{\psi}=\bm 0
\end{split}
\end{equation}
is solvable for some value of $\nu>0$, $\theta\in[0,2\pi)$, and $\bm{\psi}(\ne \bm 0)\in \mathbb{C}^n$.

As in Section 2, we also give an estimate for solutions of Eq. \eqref{2eigen}.
\begin{lemma}\label{3nu}
Assume that $(\nu^d, \theta^d,\bm{\psi}^d )$ solves \eqref{2eigen}, where $\bm{\psi}^d=(\psi_1^d,\dots,\psi_n^d)^T(\ne\bm 0)
\in\mathbb{C}^n$.  Then for any $\tilde d>0$,
$\left|\nu^d\right|$ is bounded for
$d\in(0,\tilde d]$.
\end{lemma}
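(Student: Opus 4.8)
The plan is to follow the proof of Lemma~\ref{nu} almost verbatim, the only simplification being that here the reaction terms carry coefficient $1$ rather than $\la$, so no rescaling of $\nu$ is needed. The idea is to pair the vector identity \eqref{2eigen} against $\bm\psi^d$, use the symmetry of $D$ to discard the real part of the dispersal term, and then read off $\nu^d$ from the imaginary part.

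First I would substitute $(\nu^d,\theta^d,\bm\psi^d)$ into \eqref{2eigen} and multiply the resulting identity by the row vector $\left(\overline{\psi^d_1},\dots,\overline{\psi^d_n}\right)$, i.e.\ take the Hermitian pairing $\langle\bm\psi^d,\cdot\rangle$. This produces the scalar equation
\[
d\sum_{j=1}^n\sum_{k\ne j}d_{jk}\left(\overline{\psi^d_j}\psi^d_k-|\psi^d_j|^2\right)
+\sum_{j=1}^n\left(m_j-u^d_j\right)|\psi^d_j|^2
-e^{-{\rm i}\theta^d}\sum_{j=1}^n u^d_j|\psi^d_j|^2
-{\rm i}\nu^d\sum_{j=1}^n|\psi^d_j|^2=0.
\]

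Next, exactly as in Lemma~\ref{nu}, I would symmetrize the dispersal sum using the symmetry of $(d_{jk})_{n\times n}$ to obtain
\[
\sum_{j=1}^n\sum_{k\ne j}d_{jk}\left(\overline{\psi^d_j}\psi^d_k-|\psi^d_j|^2\right)
\le-\frac{1}{2}\sum_{j=1}^n\sum_{k\ne j}d_{jk}\left(|\psi^d_j|-|\psi^d_k|\right)^2\le0,
\]
so that this term is real and nonpositive. Since $\sum_j(m_j-u^d_j)|\psi^d_j|^2$ is real as well, the only imaginary contributions come from the last two terms; taking imaginary parts gives
\[
\nu^d\sum_{j=1}^n|\psi^d_j|^2=\sin\theta^d\sum_{j=1}^n u^d_j|\psi^d_j|^2.
\]
Because $u^d_j>0$ and $|\sin\theta^d|\le1$, this yields the pointwise bound $|\nu^d|\le\max_{j}u^d_j\le\|\bm u^d\|_\infty$.

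Finally, to upgrade this to a uniform bound on $(0,\tilde d]$, I would invoke Lemma~\ref{asymp}: $\bm u^d$ is continuously differentiable, hence continuous, on $[0,\hat d)$, so $\|\bm u^d\|_\infty$ attains a finite maximum over the compact interval $[0,\tilde d]$, giving $|\nu^d|\le\max_{d\in[0,\tilde d]}\|\bm u^d\|_\infty<\infty$. I do not expect a serious obstacle: the only nontrivial step is the real-nonpositivity of the dispersal term, which is the same symmetry computation already carried out in Lemma~\ref{nu} (cf.\ Lemma~2.3 of \cite{Busenberg}); the rest is bookkeeping. The one point to keep in mind is that the estimate presupposes $\tilde d$ lies in the existence range $[0,\hat d)$ of $\bm u^d$ --- automatic when $\delta\ge0$ since $\hat d=\infty$, and requiring $\tilde d<d_*$ when $\delta<0$.
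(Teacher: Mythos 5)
Your proposal is correct and follows the paper's own argument essentially verbatim: the paper likewise pairs \eqref{2eigen} with $\bm\psi^d$, cites the symmetrization from Lemma \ref{nu} to isolate the imaginary part and obtain $\nu^d\sum_j|\psi^d_j|^2=\sin\theta^d\sum_j u^d_j|\psi^d_j|^2$, and then bounds $|\nu^d|$ by $\max_{d\in[0,\tilde d]}\|\bm u^d\|_\infty$ via the continuity of $\bm u^d$ from Lemma \ref{asymp}. Your closing remark about $\tilde d$ needing to lie in the existence range of $\bm u^d$ is a reasonable point of care, but otherwise there is nothing to add.
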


\begin{proof}
Substituting $(\nu^d,\theta^d,\bm{\psi}^d )$ into \eqref{2eigen} and multiplying it by $\left(\overline {\psi_1^d},\dots,\overline
{\psi_n^d}\right)$, we have
\begin{equation*}
\left(\overline {\psi_1^d},\dots,\overline
{\psi_n^d}\right)\left[dD{\bm \psi}^d + \textrm{diag}\left(m_j-u_j^d\right)\bm \psi^d- e^{-\textrm i\theta^d}\textrm{diag}( u_j^d)\bm{ \psi}^d-\textrm{i}\nu^d\bm{\psi}^d\right]=0,
\end{equation*}
Using the similar arguments as in the proof of Lemma \ref{nu},
we obtain that
\begin{equation*}
\nu^d\sum_{j=1}^n\left|\psi^d_j\right|^2= \sin\theta^d\sum_{j=1}^nu_j^d\left|\psi^d_j\right|^2,
\end{equation*}
which implies that
\begin{equation}
\left|\nu^d\right|\le\max_{d\in[0,\tilde d]}\|\bm u^d\|_\infty \;\;\text{for any}\;\;d\in(0,\tilde d],
\end{equation}
where $\|\cdot\|_\infty$ is defined as in \eqref{norm}. Then we see from Lemma \ref{asymp} that
 $\left|\nu^d\right|$ is bounded for
$d\in(0,\tilde d]$.
\end{proof}

Now we consider the solution of \eqref{2eigen} for $d=0$.

\begin{lemma}\label{limz}
Assume that $(A_1)$ and $(A_3)$ hold, $d=0$, and $m_i\ne m_j$ for any $i\ne j$ and $1\le i,j\le p$. Then there exist exactly $p$ pairs of $(\nu_q^0,\theta_q^0)\in \mathbb R_+\times [0,2\pi]$, where $\nu_q^0=m_q$, $\theta_q^0=\pi/2$ for $q=1,\dots,p$, such that
\begin{equation}
\mathcal S_q:=\{\bm \psi:\bm \psi\ne \bm 0, \;H(0, \nu_q^0,\theta_q^0,\bm\psi)=\bm 0\}\ne\emptyset,
\end{equation}
where $H(d,\nu,\theta, \bm\psi)$ is defined as in \eqref{2eigen}.
Moreover, for any $q=1,\dots,p$, $\mathcal S_q=\{c\bm\psi_q^0:c\in\mathbb C\}$, where
${\bm \psi}^{0}_q=(\psi^{0}_{q1},\dots,\psi^{0}_{qn})$,
$\psi^{0}_{qq}=1$ and $\psi^{0}_{qj}=0$ for $j\ne q$.
\end{lemma}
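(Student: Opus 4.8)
The plan is to exploit the fact that setting $d=0$ decouples the $n$ equations in \eqref{2eigen}, reducing the eigenvalue problem to $n$ independent scalar conditions. First I would substitute $d=0$ into the definition of $H$ in \eqref{2eigen}; since the coupling term $dD\bm\psi$ disappears, the equation $H(0,\nu,\theta,\bm\psi)=\bm 0$ becomes, componentwise,
\begin{equation*}
\left[(m_j-u_j^0)-e^{-\textrm i\theta}u_j^0-\textrm i\nu\right]\psi_j=0,\qquad j=1,\dots,n,
\end{equation*}
where by Lemma \ref{asymp} evaluated at $d=0$ we have $u_j^0=(m_j)_+$, i.e. $u_j^0=m_j$ for $j=1,\dots,p$ and $u_j^0=0$ for $j=p+1,\dots,n$.

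Next I would analyze the scalar coefficient in each of the two regimes. For an unfavorable patch $j>p$ the coefficient equals $m_j-\textrm i\nu$; since its real part is $m_j<0$, it never vanishes, forcing $\psi_j=0$. For a favorable patch $j\le p$ the coefficient equals $-m_je^{-\textrm i\theta}-\textrm i\nu$, and separating real and imaginary parts gives $m_j\cos\theta=0$ and $m_j\sin\theta=\nu$. Because $m_j>0$ the first equation yields $\theta=\pi/2$ or $\theta=3\pi/2$, and the requirement $\nu>0$ in the second rules out $\theta=3\pi/2$, leaving $\theta=\pi/2$ and $\nu=m_j$. Since any nonzero $\bm\psi$ must have a nonvanishing component at some favorable patch, these are the only admissible pairs.

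From here the counting and the description of $\mathcal S_q$ follow. For a fixed admissible pair $(\nu,\theta)=(m_q,\pi/2)$ I would evaluate the favorable-patch coefficient at $j\ne q$, obtaining $\textrm i(m_j-m_q)$, which is nonzero precisely because of the hypothesis $m_i\ne m_j$ for $i\ne j$; hence $\psi_j=0$ for every $j\ne q$ (unfavorable patches already being excluded), while $\psi_q$ remains free. This identifies $\mathcal S_q$ with $\{c\bm\psi_q^0:c\in\mathbb C\}$, where $\bm\psi_q^0$ has $q$-th entry $1$ and all others $0$, and shows there are exactly the $p$ distinct pairs $(\nu_q^0,\theta_q^0)=(m_q,\pi/2)$, $q=1,\dots,p$.

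The computation is elementary once the decoupling at $d=0$ is noticed, so there is no serious analytic obstacle; the only point demanding care is the role of the distinctness assumption $m_i\ne m_j$, which is exactly what guarantees that each admissible frequency $\nu=m_q$ selects a single patch and therefore that each kernel $\mathcal S_q$ is one-dimensional. I would be sure to invoke this hypothesis explicitly when eliminating the off-diagonal components, since without it the selected $\mathcal S_q$ could have larger dimension and the pairs would no longer be distinct.
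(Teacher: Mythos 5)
Your proof is correct and follows essentially the same route as the paper: set $d=0$ so the system decouples into diagonal scalar conditions, observe that the unfavorable-patch factors $m_j-\mathrm{i}\nu$ ($m_j<0$) never vanish while the favorable-patch factors force $\theta=\pi/2$ and $\nu=m_q$, and use the distinctness of the $m_j$ to conclude each kernel is one-dimensional. The paper merely phrases the same computation via the vanishing of the product $\prod_{j=1}^p(m_je^{-\mathrm{i}\theta}+\mathrm{i}\nu)\prod_{j=p+1}^n(m_j-\mathrm{i}\nu)$ rather than componentwise, which is an immaterial difference.
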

\begin{proof}
Note that
\begin{equation*}
\bm u^0=\begin{cases} m_j,&j=1,\dots,p,\\
0,&\;\;j=p+1,\dots,n.
\end{cases}
\end{equation*}
Therefore, if there exists $\bm \psi\ne\bm 0$ such that
$H(0, \nu,\theta,\bm\psi)=\bm0$, then
\begin{equation*}
\prod_{j=1}^p(m_je^{-\textrm{i}\theta}+\textrm {i}\nu)\prod_{j=p+1}^n(m_j-\textrm{i}\nu)=0,
\end{equation*}
which implies that $\nu=\nu_q^0=m_q$, $\theta=\theta_q^0=\pi/2$ for $q=1,\dots,p$. Since $m_i\ne m_j$ for any $i\ne j$ and $1\le i,j\le p$,
it follows that $\mathcal S_q=\{c\bm\psi_q^0:c\in\mathbb C\}$. This completes the proof.
\end{proof}

Then we show that:
\begin{lemma}\label{l4.4}
Assume that  $(A_1)$ and $(A_3)$ hold, $m_i\ne m_j$ for any $i\ne j$ and $1\le i,j\le p$, and $d\in(0,\tilde d)$, where $\tilde d$ is sufficiently small. Then there exist exactly $p$ pairs of $(v_q^d,\theta_q^d)\in\mathbb R_+\times [0,2\pi]$ such that
\begin{equation}\label{4.8}
\mathcal S^d_q:=\{\bm \psi:\bm \psi\ne \bm 0,\;H(d, \nu_q^d,\theta_q^d,\bm\psi)=\bm 0\}\ne\emptyset,
\end{equation}
where $H(d,\nu,\theta, \bm\psi)$ is defined as in \eqref{2eigen}.
Moreover, for any $q=1,\dots,p$, $\mathcal S^d_q=\{c\bm\psi_q^d:c\in\mathbb C\}$, and
\begin{equation*}\label{limd}
\lim_{d\to0} v_q^d= v_q^0=m_q, \;\;\lim_{d\to0} \theta_q^d= v_q^0=\pi/2 \;\;\text{and}\;\;\lim_{d\to0}\bm \psi_q^d=\bm \psi_q^0,
\end{equation*}
 where $\bm \psi_q^0$ is defined as in Lemma \ref{limz}.
\end{lemma}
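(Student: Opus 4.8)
The plan is to treat $d$ as the bifurcation parameter and perturb off the explicit $d=0$ solutions furnished by Lemma~\ref{limz}, applying the implicit function theorem exactly as in the proof of Theorem~\ref{cha}. Fix $q\in\{1,\dots,p\}$. Since $\bm\psi$ in \eqref{2eigen} is determined only up to a complex scalar, I would split $\mathbb C^n=\mathrm{span}\{\bm\psi_q^0\}\oplus(Y_q)_{\mathbb C}$ with $Y_q=\{\bm x\in\mathbb R^n:x_q=0\}$, write $\bm\psi=\beta\bm\psi_q^0+\bm z$ for $\bm z\in(Y_q)_{\mathbb C}$ and $\beta\ge0$, and append the normalization $\|\bm\psi\|_2^2=1$ to pin down both magnitude and phase. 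Substituting into \eqref{2eigen} yields a map
\[
\Phi(d,\nu,\theta,\bm z,\beta)=\bigl(H(d,\nu,\theta,\beta\bm\psi_q^0+\bm z),\ \beta^2+\|\bm z\|_2^2-1\bigr)
\]
from $\mathbb R\times\mathbb R\times\mathbb R\times(Y_q)_{\mathbb C}\times\mathbb R$ to $\mathbb C^n\times\mathbb R$, with the property that $(\nu,\theta,\bm z,\beta)$ solves $\Phi(d,\cdot)=\bm 0$ iff \eqref{2eigen} holds. By Lemma~\ref{limz} the point $(\nu,\theta,\bm z,\beta)=(m_q,\pi/2,\bm 0,1)$ solves $\Phi(0,\cdot)=\bm 0$, and both domain and codomain have real dimension $2n+1$, so the system is square.

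The crux is to show that the Fr\'echet derivative $D_{(\nu,\theta,\bm z,\beta)}\Phi$ at $(0,m_q,\pi/2,\bm 0,1)$ is an isomorphism. Using $u_j^0=m_j$ for $j\le p$ and $u_j^0=0$ for $j>p$ from Lemma~\ref{asymp}, the linearization decouples componentwise. For $j\ne q$ the $j$-th equation reduces to $\mathrm{i}(m_j-m_q)\,\delta z_j=0$ when $1\le j\le p$ and to $(m_j-\mathrm{i}m_q)\,\delta z_j=0$ when $j>p$; the hypotheses $m_i\ne m_j$ for $i\ne j$ with $1\le i,j\le p$ and $m_j\ne0$ make every coefficient nonzero, forcing $\delta\bm z=\bm 0$. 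The normalization linearizes to $\delta\beta=0$, and the $q$-th equation gives the complex relation $-\mathrm{i}\,\delta\nu+m_q\,\delta\theta=0$, whose real and imaginary parts yield $\delta\theta=0$ and $\delta\nu=0$ because $m_q>0$. Hence the derivative is injective, so bijective, and the implicit function theorem supplies $\tilde d>0$ and a unique $C^1$ branch $(\nu_q^d,\theta_q^d,\bm z_q^d,\beta_q^d)$ on $[0,\tilde d)$ realizing the limits stated in the lemma, with eigenvector $\bm\psi_q^d=\beta_q^d\bm\psi_q^0+\bm z_q^d$. That $\mathcal S_q^d$ is exactly one-dimensional I would get separately: the coefficient matrix of \eqref{2eigen} at $(\nu_q^d,\theta_q^d)$ is a small perturbation of the diagonal matrix $M^0$ whose only vanishing diagonal entry is the $q$-th, so its other $n-1$ eigenvalues remain bounded away from $0$ while the remaining one is pinned at $0$ by construction of $\bm\psi_q^d$.

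It then remains to prove there are \emph{exactly} $p$ pairs. The $p$ branches are mutually distinct for small $d$ since their limits $\nu_q^0=m_q$ are distinct. To rule out any further solution I would argue by contradiction, in the spirit of the uniqueness part of Theorem~\ref{cha}: if some normalized solution $(\nu^d,\theta^d,\bm\psi^d)$ of \eqref{2eigen} along a sequence $d\to0$ stayed off all $p$ branches, then Lemma~\ref{3nu} bounds $\{\nu^d\}$ and the normalization bounds $\{\bm\psi^d\}$, so a subsequence converges to a solution of $H(0,\nu_*,\theta_*,\bm\psi_*)=\bm 0$; Lemma~\ref{limz} identifies this limit as one of the $(m_q,\pi/2,\bm\psi_q^0)$, whereupon the local uniqueness from the implicit function theorem forces the tail of the sequence to coincide with the $q$-th branch, a contradiction.

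The main obstacle is the exactness claim rather than the existence: the implicit function theorem only produces local branches near the $p$ explicit $d=0$ solutions, so excluding extraneous purely imaginary eigenvalues for small $d>0$ requires the global compactness argument above, which hinges essentially on the a priori bound of Lemma~\ref{3nu} together with the nondegeneracy provided by $m_i\ne m_j$ and $m_j\ne0$.
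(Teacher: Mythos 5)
Your proposal is correct and follows essentially the same route as the paper: decompose $\mathbb C^n=\mathrm{span}\{\bm\psi_q^0\}\oplus (Y_q)_{\mathbb C}$, apply the implicit function theorem at the explicit $d=0$ solutions of Lemma \ref{limz} (the linearization you compute agrees with the paper's), and rule out extra solutions by the compactness/contradiction argument based on Lemma \ref{3nu}. The only cosmetic difference is that you normalize $\bm\psi$ on the unit sphere with an extra scalar unknown $\beta$, whereas the paper fixes the $q$-th coordinate equal to $1$ and uses upper semicontinuity of $\dim\mathcal S_q^d$ in place of your eigenvalue-perturbation remark; both are equivalent.
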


\begin{proof}
We firstly show the existence. For simplicity, we will only show the existence of $(v_1^d,\theta_1^d)$, and the others could be obtained similarly.
Let $$Y_1:=\{\bm x=(x_1,\dots,x_n)^T\in\mathbb {C}^n:x_1=0\},$$
 and consequently $\mathbb C^n={\rm span}\{\bm\psi_1^0\}\oplus Y_1$. Let
$$H_1(d, \nu,\theta,\bm \xi_1):=H(d, \nu,\theta,\bm\psi^0_1+\bm \xi_1):\mathbb R^3\times Y_1\to \mathbb C^n.$$
A direct computation implies that
$H_1(0,\nu_1^0,\theta_1^0,\bm 0)=\bm 0$, and
\begin{equation*}
\begin{split}
&D_{(\nu,\theta,\bm \xi)}H_1(0,\nu_1^0,\theta_1^0,\bm 0)[\vartheta,\epsilon,\bm \chi]=
\left(\begin{array}{c}
m_1\epsilon-{\rm i}\vartheta
\\
{\rm i}(m_2-m_1) \chi_2\\
\vdots\\
{\rm i}(m_p-m_1) \chi_p\\
(m_{p+1}-{\rm i}m_1)\chi_{p+1}\\
\vdots\\
(m_{n}-{\rm i}m_1)\chi_{n}\\
\end{array}\right),
\end{split}
\end{equation*}
where $\bm \chi=(\chi_1,\dots,\chi_n)\in Y_1$, and $D_{(\nu,\theta,\bm \xi)}H_1(0,\nu_1^0,\theta_1^0,\bm 0)$ is the Fr\'echet derivative of $H_1$ with respect to
$(\nu,\theta,\bm \xi_1)$ at $(0,\nu_1^0,\theta_1^0,\bm 0)$. Note that $D_{(\nu,\theta,\bm \xi)}H_1(0,\nu_1^0,\theta_1^0,\bm 0)$ is a bijection.
It follows from the implicit function theorem
that there exist a constant $\delta>0$, a neighborhood $N_1$ of $(\nu_1^0,\theta_1^0,\bm 0)$ and a continuously differentiable function
$$
(v_1^d,\theta_1^d,\bm \xi_1^d):[0,\delta)\mapsto N_1
$$
such that for any $d\in [0,\delta)$, the unique solution of $H_1(d, \nu,\theta,\bm \xi)=\bm 0$ in the neighborhood $N_1$ is $(v_1^d,\theta_1^d,\bm \xi_1^d)$. Letting $\bm \psi_1^d=\bm \psi_1^0+\bm \xi_1^d$, we see that
 \begin{equation}\label{SSp}
\textrm{span}(\bm \psi_1^d)\subset \mathcal S^d_1 \;\;\text{for any}\;\; d\in[0,\delta),
\end{equation}
and \eqref{4.8} holds.
Since the dimension of $\mathcal S^d_1$ is upper semicontinuous, it follows that there exists $\delta_1<\delta$
such that $\dim \mathcal S_1^d\le 1$ for any $d\in[0,\delta_1)$. This, together with \eqref{SSp}, implies that
$\mathcal S^d_1=\{c\bm\psi_1^d:c\in\mathbb C\}$. This completes the part of existence.

Now we show that there exists $\delta_2<\delta_1$ such that, for any $d\in(0,\delta_2)$,
there are exactly $p$ pairs of $(v_q^d,\theta_q^d)\in\mathbb R_+\times [0,2\pi]$ such that \eqref{4.8} holds.
If it is not true, then there exist sequences $\{d_j\}_{j=1}^\infty$
and $\left\{\left(\nu^{d_j},\theta^{d_j},\bm \psi^{d_j}\right)\right\}_{j=1}^\infty$
such that
$\lim_{j\to\infty}d_j=0$, and for any $j$, $\left(\nu^{d_j},\theta^{d_j}\right)\ne (v_q^{d_j},\theta_q^{d_j})(q=1,\dots,p)$, $\left\|\bm \psi^{d_j}\right\|_2=1$, $\nu^{d_j}\ge0$, $\theta^{d_j}\in[0,2\pi]$, and
$$H\left(d_j, \nu^{d_j},\theta^{d_j},\bm \psi^{d_j}\right)=\bm 0.$$
It follows from Lemma \ref{3nu} that $\{\nu^{d_j}\}$ is bounded.
Then
there exists a subsequence $\left\{\left(\nu^{d_{j_k}},\theta^{d_{j_k}},\bm \psi^{d_{j_k}}\right)\right\}_{j=1}^\infty$ such that
\begin{equation*}
\lim_{k\to \infty} \theta^{d_{j_k}}=\theta^*,\;\;\lim_{k\to\infty}\nu^{d_{j_k}}=\nu^*\;\;\text{and}\;\;\lim_{k\to\infty}\bm\psi^{d_{j_k}}=\bm\psi^*.
\end{equation*}
This implies that $H(0,\nu^*,\theta^*,\bm\psi^*)=\bm 0$, and consequently we see from Lemma \ref{limz} that there exist $1\le q_0\le p$ and a constant $c_{q_0}$ such that
$\nu^*=\nu_{q_0}^0$ and $\theta^*=\theta_{q_0}^0$, $\bm\psi^*=c_{q_0}\bm \psi_{q_0}^0$.
 Without loss of generality, we assume that $q_0=1$. Then for sufficiently large $k$,
 $$\left(\nu^{d_{j_k}},\theta^{d_{j_k}},\displaystyle\frac{1}{c_1}\bm \psi^{d_{j_k}}-\bm \psi_1^0\right)\in N_1,$$
 which yields
 $$\left(\nu^{d_{j_k}},\theta^{d_{j_k}}\right)=\left(\nu_1^{d_{j_k}},\theta_1^{d_{j_k}}\right).$$
This is a contradiction. Therefore, there exist exactly $p$ pairs of $(v_q^d,\theta_q^d)\in\mathbb R_+\times [0,2\pi]$ such that
\eqref{4.8} holds.
\end{proof}

Then we see from Lemma \ref{l4.4} that:
\begin{theorem}\label{3c25}
Assume that $(A_1)$ and $(A_3)$ hold, $m_i\ne m_j$ for any $i\ne j$ and $1\le i,j\le p$, and $d\in(0,\tilde d)$, where $\tilde d$ is sufficiently small. Then  $(\nu,\tau,\bm \psi)$ solves
\begin{equation*}
\begin{cases}
\tilde \Delta(d,{\rm i}\nu,r)\bm \psi=\bm0,\\
\nu>0,\;r\ge0,\;\bm \psi (\ne \bm0) \in \mathbb C^n,\\
\end{cases}
\end{equation*}
if and
only if there exists $1\le q\le p$ such that
\begin{equation}\label{3par}
\nu=\nu^d_q,\;\bm \psi= c{\bm \psi}^d_q,\;
r=r_{ql}=\frac{\theta^d_q+2l\pi}{\nu^d_q},\;\; l=0,1,2,\cdots,
\end{equation}
where $\nu^d_q$, $\theta^d_q$, and ${\bm \psi}^d_q$ are defined as in Lemma \ref{l4.4}.
\end{theorem}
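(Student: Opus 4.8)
The plan is to obtain this statement as an immediate consequence of Lemma~\ref{l4.4}, in exactly the way Theorem~\ref{c25} was deduced in the case $\delta>0$. The one observation that bridges the two formulations is that the delay $r$ enters the characteristic operator $\tilde\Delta(d,{\rm i}\nu,r)$ only through the scalar $e^{-{\rm i}\nu r}$. Writing $\theta$ for the unique representative in $[0,2\pi)$ of $\nu r$ modulo $2\pi$, one has $e^{-{\rm i}\nu r}=e^{-{\rm i}\theta}$, so that comparing the definitions \eqref{2triangle} and \eqref{2eigen} gives the identity $\tilde\Delta(d,{\rm i}\nu,r)\bm\psi=H(d,\nu,\theta,\bm\psi)$.

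First I would record the resulting equivalence: for $\nu>0$, $r\ge0$ and $\bm\psi\ne\bm 0$, the triple $(\nu,r,\bm\psi)$ solves $\tilde\Delta(d,{\rm i}\nu,r)\bm\psi=\bm 0$ if and only if $(\nu,\theta,\bm\psi)$ solves $H(d,\nu,\theta,\bm\psi)=\bm 0$ with $\theta\in[0,2\pi)$ and $\nu r\equiv\theta\pmod{2\pi}$. Next I would invoke Lemma~\ref{l4.4}: for $d\in(0,\tilde d)$ with $\tilde d$ sufficiently small, $H(d,\nu,\theta,\bm\psi)=\bm 0$ admits a nonzero solution exactly when $(\nu,\theta)=(\nu^d_q,\theta^d_q)$ for some $1\le q\le p$, and then the solution set is $\mathcal S^d_q=\{c\bm\psi^d_q:c\in\mathbb C\}$. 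Combining these two facts forces $\nu=\nu^d_q$ and $\bm\psi=c\bm\psi^d_q$, and leaves the single scalar compatibility relation $\nu^d_q\,r\equiv\theta^d_q\pmod{2\pi}$ to be solved for $r$.

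Finally I would solve that relation. Since Lemma~\ref{l4.4} gives $\theta^d_q\to\pi/2$ and $\nu^d_q\to m_q>0$ as $d\to 0$, for $\tilde d$ small we have $\theta^d_q\in(0,2\pi)$ and $\nu^d_q>0$; hence the nonnegative solutions of $\nu^d_q\,r\equiv\theta^d_q\pmod{2\pi}$ are precisely $r=r_{ql}=(\theta^d_q+2l\pi)/\nu^d_q$ for $l=0,1,2,\dots$, which is \eqref{3par}. Because the analytic substance---existence of the branches through the implicit function theorem and exhaustiveness of the $p$ branches through the compactness--contradiction argument---is already contained in Lemma~\ref{l4.4}, there is no genuine obstacle here; the only point requiring (routine) care is the periodicity bookkeeping, i.e.\ checking that $0<\theta^d_q<2\pi$ so that no nonnegative value of $r$ is missed and none is double-counted.
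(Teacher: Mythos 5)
Your proposal is correct and follows exactly the route the paper intends: the paper states this theorem without a written proof, as an immediate consequence of Lemma~\ref{l4.4}, and your argument supplies precisely the intended bookkeeping (identifying $\tilde\Delta(d,{\rm i}\nu,r)\bm\psi$ with $H(d,\nu,\theta,\bm\psi)$ via $\theta=\nu r \bmod 2\pi$ and then solving $\nu^d_q r\equiv\theta^d_q$ for $r\ge0$). The same pattern is used for Theorem~\ref{c25} in the $\delta>0$ case, so there is nothing further to add.
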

Finally, we show the existence of Hopf bifurcation.
\begin{theorem}
Assume that assumptions $(A_1)$ and $(A_3)$ hold, $m_i\ne m_j$ for any $i\ne j$ and $1\le i,j\le p$, and let $\bm u^{d}=(u^d_{1},\dots,u^d_{n})^T\gg\bm 0$ be the unique positive equilibrium of model \eqref{patc} for $d\in(0,\hat d)$, where $\hat d$ is defined as in \eqref{hatd}. Then for
any $d\in(0,\tilde d)$, where $\tilde d$ is sufficiently small,
the unique positive steady state $\bm u^d$ of \eqref{patc}
is locally asymptotically stable when $r\in[0,\tilde r_0)$, and
unstable when $r\in(\tilde r_0,\infty)$, where $\tilde r_0=r_{q_*0}$ and $q^*$ satisfies $m_{q^*}=\max_{1\le j\le p} m_j$. Moreover, when $r=\tilde r_0$,
system \eqref{patc} occurs Hopf bifurcation at $\bm u^{d}$.
\end{theorem}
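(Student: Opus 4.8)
The plan is to transplant the spectral bookkeeping of the large--dispersal analysis (Theorems~\ref{c25}--\ref{thm37}) to the decoupled limit $d=0$, and then follow the first pair of eigenvalues that reaches the imaginary axis as $r$ grows. First I would fix the baseline $r=0$: there the delay disappears and, via the equivalence $\tau=dr$, $\la=1/d$, Lemma~\ref{lm2.2} shows that $\bm u^d$ is globally, hence locally, asymptotically stable, so every eigenvalue of $\tilde A_0(d)$ lies in the open left half plane. Theorem~\ref{3c25} already pins down where the spectrum can touch the imaginary axis: this happens precisely at the delays $r=r_{ql}=(\theta_q^d+2l\pi)/\nu_q^d$ for $q=1,\dots,p$ and $l=0,1,2,\dots$, with crossing frequency $\nu_q^d$ and eigenvector $\bm\psi_q^d$.

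Second I would order these critical delays. By Lemma~\ref{l4.4} we have $\nu_q^d\to m_q$ and $\theta_q^d\to\pi/2$ as $d\to0$, so $r_{q0}\to\pi/(2m_q)$, while $r_{ql}$ is strictly increasing in $l$; since the $m_q$ are distinct and $m_{q^*}=\max_{1\le j\le p}m_j$, for $\tilde d$ small enough the global minimum of $\{r_{ql}\}$ is attained uniquely at $(q,l)=(q^*,0)$, which defines $\tilde r_0=r_{q^*0}$, and no other $r_{ql}$ coincides with it. Because the eigenvalues depend continuously on $r$, start in the left half plane at $r=0$, and can meet the imaginary axis only at the $r_{ql}$, the number of eigenvalues with positive real part stays $0$ on $[0,\tilde r_0)$; this is the asserted local asymptotic stability there.

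Third I would establish, at $r=\tilde r_0$, that ${\rm i}\nu_{q^*}^d$ is a simple eigenvalue of $\tilde A_{\tilde r_0}(d)$ and that the crossing is transversal, copying the arguments of Theorems~\ref{thm34a} and~\ref{thm35}. Simpleness reduces to the nonvanishing of
\begin{equation*}
\tilde S_{ql}(d):=\sum_{j=1}^n (\psi_{qj}^d)^2-r_{ql}\,e^{-{\rm i}\theta_q^d}\sum_{j=1}^n u_j^d (\psi_{qj}^d)^2 ,
\end{equation*}
and using $\psi_{qj}^d\to\psi_{qj}^0$, which equals $1$ for $j=q$ and vanishes otherwise, together with $u_j^d\to(m_j)_+$ (Lemmas~\ref{asymp} and~\ref{limz}), one gets $\tilde S_{q^*0}(d)\to 1+{\rm i}\pi/2\ne0$, so $\tilde S_{q^*0}(d)\ne0$ for small $d$. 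Differentiating the characteristic identity \eqref{2triangle} along the smooth eigenvalue branch $\mu(r)$ and pairing with $\bm\psi_q^d$ (using the symmetry of the characteristic matrix $\tilde\Delta(d,{\rm i}\nu_q^d,r_{ql})$) yields
\begin{equation*}
\frac{d\mu(r_{ql})}{dr}=\frac{{\rm i}\,\nu_q^d\,e^{-{\rm i}\theta_q^d}\sum_{j=1}^n u_j^d (\psi_{qj}^d)^2}{\tilde S_{ql}(d)} ,
\end{equation*}
whose limit is $m_q^2/\bigl(1+{\rm i}(\pi/2+2l\pi)\bigr)$, with strictly positive real part; hence $d{\mathcal Re}[\mu(r_{ql})]/dr>0$ at every crossing for small $d$. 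Consequently the count of right--half--plane eigenvalues is nondecreasing in $r$, jumps by $2$ at $\tilde r_0$, and stays positive for $r>\tilde r_0$, giving instability there; combined with the stability on $[0,\tilde r_0)$, the simple transversal crossing at $\tilde r_0$, and the Hopf bifurcation theorem, this produces the Hopf bifurcation at $r=\tilde r_0$.

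The main obstacle is the degeneracy of the $d=0$ limit: when the dispersal is switched off the patches decouple, the characteristic function factors into the $p$ scalar pieces $m_q e^{-{\rm i}\theta}+{\rm i}\nu$, and one faces $p$ \emph{simultaneous} simple crossings at the distinct frequencies $m_q$ rather than the single branch available for large $d$. The hypothesis $m_i\ne m_j$ is exactly what keeps these crossings distinct and lets Lemma~\ref{l4.4} isolate each branch by the implicit function theorem; the real work is to show that the perturbation from $d=0$ to small $d>0$ preserves the strict ordering of the $r_{ql}$, the nonvanishing of $\tilde S_{ql}(d)$, and the sign of the transversality derivative, so that the branch $q^*$ carrying the largest growth rate is singled out as the first to destabilise.
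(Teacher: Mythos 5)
Your proposal is correct and follows essentially the same route as the paper: identify the critical delays $r_{ql}$ via Lemma \ref{l4.4} and Theorem \ref{3c25}, use the limits $\nu_q^d\to m_q$, $\theta_q^d\to\pi/2$ to see that $r_{q_*0}=\min_q r_{q0}$ for small $d$, verify simplicity and transversality through the nonvanishing of $S_{q_*}(d)\to 1+{\rm i}\pi/2$ exactly as in Theorems \ref{thm34a} and \ref{thm35}, and conclude by the standard eigenvalue-crossing count. You merely spell out details (the $r=0$ baseline and the explicit transversality formula) that the paper compresses into a reference to the earlier proofs.
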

\begin{proof}
It follows from Lemma \ref{l4.4} and Eq. \eqref{3par} that
\begin{equation}
\label{dsmall}
\lim_{d\to0}r_{q0}=\lim_{d\to0}\displaystyle\frac{\theta_q^d}{\nu_q^d}=\displaystyle\frac{\pi}{2m_q}\;\;\text{for any}\;\;q=1,\dots,p.
\end{equation}
Therefore $r_{q_{*}0}=\min\{r_{10},r_{20},\dots,r_{p0}\}$ when $d$ is sufficiently small.
Let
\begin{equation}\label{Snd}
S_{q_*}(d):=\sum_{j=1}^n \left(\psi^d_{q_*j}\right)^2-r_{q_*0}e^{{\rm-i}\theta_{q_*}^d}\sum_{j=1}^n u_{j}^d\left(\psi^d_{q_*j}\right)^2.
\end{equation}
It follows from Lemmas \ref{limz} and \ref{l4.4} that
$$\lim_{d\to0}S_{q_*}(d)=1+\displaystyle\frac{\pi}{2}\rm{ i}.$$
Using the similar arguments as in the proof of Theorems \ref{thm34a} and \ref{thm35}, we see that when $r=r_{q_*0}$, \eqref{2triangle} has
a pair of simple eigenvalue
$\pm {\rm i}m_{q_*}$, and the transversality condition holds. Therefore, when $r=r_{q_*0}$,
system \eqref{patc} occurs Hopf bifurcation at $\bm u^{d}$. This completes the proof.
\end{proof}

\begin{remark}\label{remark3.7}
It follows from \eqref{dsmall} that the first Hopf bifurcation $\tilde r_0$ satisfies
\begin{equation*}
\lim_{d\to0}\tilde r_{0}=\hat r_{q_*}:=\displaystyle\frac{\pi}{2m_{q_*}},
\end{equation*}
where $m_{q^*}=\max_{1\le j\le p} m_j$.
Here $\hat r_{q_*}$ is also the first Hopf bifurcation value for the following model:
\begin{equation}\label{localm}
w'=w\left(m_{q_*}-w(t-r)\right),
\end{equation}
where $m_{q^*}=\max_{1\le j\le p} m_j$.

\end{remark}
\begin{remark}
By the similar arguments as that in \cite{ChenLouWei}, we can also compute that the direction of the Hopf bifurcation at $r=\tilde r_0$ is forward and the bifurcating periodic solution from $r=\tilde r_0$ is orbitally asymptotically stable. Here we also omit the details.
\end{remark}
\section{Discussion}
In this section, we summarize the connection between the Hopf bifurcation of the patch model \eqref{patc} and that
of the \lq\lq local\rq\rq~model:
\begin{equation}\label{localj}
u_j'=u_j\left(m_{j}-u(t-r)\right).
\end{equation}
For simplicity, we only show the case that $m_j>0$ for any $j=1,\dots,p$, and $m_i\ne m_j$ for any $i\ne j$. That is, each patch is favorable for the species.
A direct computation implies that for each $j$, the first Hopf bifurcation value of model \eqref{localj} is
$\hat r_{j}=\displaystyle\frac{\pi}{2m_{j}}$. We call it the \lq\lq local\rq\rq~Hopf bifurcation value.
Then it follows from Remark \ref{remark3.7} that when the dispersal rate $d$ tends to zero, the first Hopf bifurcation of the patch model \eqref{patc} tends to the minimum value of the \lq\lq local\rq\rq~Hopf bifurcation value over all patch $j$. On the other hand, we see from Remark \ref{r310} that the large dispersal rate $d$ may dilute the effect of the connectivity between patches, and
the first Hopf bifurcation value of model \eqref{patc} tends to that of the \lq\lq average\rq\rq~model \eqref{average} when the dispersal rate $d$ tends to infinity. For model \eqref{dclass}, we see from \cite{ShiShi2019} that when $d$ is large or near some critical value, delay $r$ could induce Hopf bifurcation, and similar results could also be found in \cite{ChenLouWei,ChenShi2012,ChenWeiZhang,ChenYu2016,Guo2015,Guo2017,GuoYan2016,HuYuan2011,SuWeiShi2009,SuWeiShi2012,YanLi2010,YanLi2012} for other delayed reaction-diffusion models. It is of interest to consider whether Hopf bifurcation could occur for model \eqref{dclass} when $d$ is small. In this paper, we
give an answer to this question in a discrete spatial setting, and show that when the dispersal rate $d$ is small, large delay could induce Hopf bifurcation, and periodic solutions could occur for model \eqref{patc}.

Now we give two examples to illustrate the theoretical results obtained in Sections 2 and 3.
Firstly, suppose that there are $n = 9$ patches arranged and connected as in
Figure \ref{fig1}.
\begin{figure}[htbp]
\centering\includegraphics[width=0.6\textwidth]{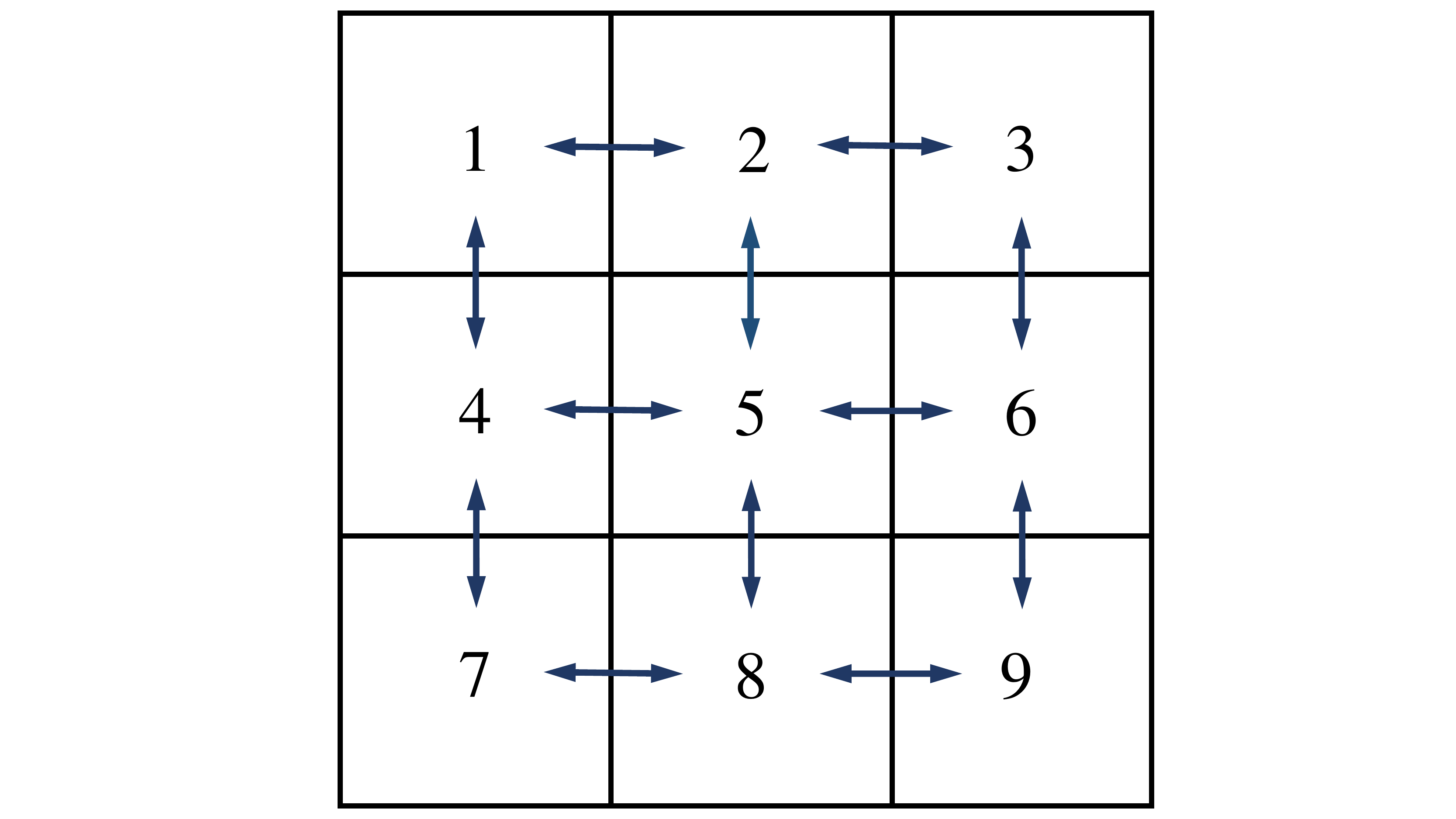}
\caption{The connectivity between nine patches.
  \label{fig1}}
\end{figure}
We choose the symmetric connectivity matrix $D=(d_{ij})$ and $(m_j)$  as follows:
\begin{equation}
\begin{split}
&d_{12}=d_{36}=d_{78}=3,\;\;d_{14}=d_{45}=d_{56}=d_{69}=2,\\
&d_{23}=d_{47}=d_{89}=1,\;\;d_{25}=4,\;\;d_{58}=5,\;\;\text{other}\;\; d_{ij} (i<j)=0,\\
&(m_1,\dots,m_9)=(10,8,16,20,24,12,18,6,14).
\end{split}
\end{equation}
Then large delay could induce Hopf bifurcation, and periodic solutions could occur when the dispersal rate $d$ is large or small, see Figure \ref{fig2}.
As is pointed out in Lemma \ref{l4.4}, there exist multiple Hopf bifurcation curves $\{r_{q0}\}_{q=1}^p=\left\{\frac{\theta^d_q}{\nu^d_q}\right\}_{q=1}^p$ when the dispersal rate $d$ is small. We conjecture that these curves could intersect when $d$ increases from zero, and consequently double Hopf bifurcation could occur and quasi-periodic solutions may arise. In fact, we numerically show the existence of such quasi-periodic solutions, see Figure \ref{fig3}.
\begin{figure}[htbp]
\centering\includegraphics[width=0.5\textwidth]{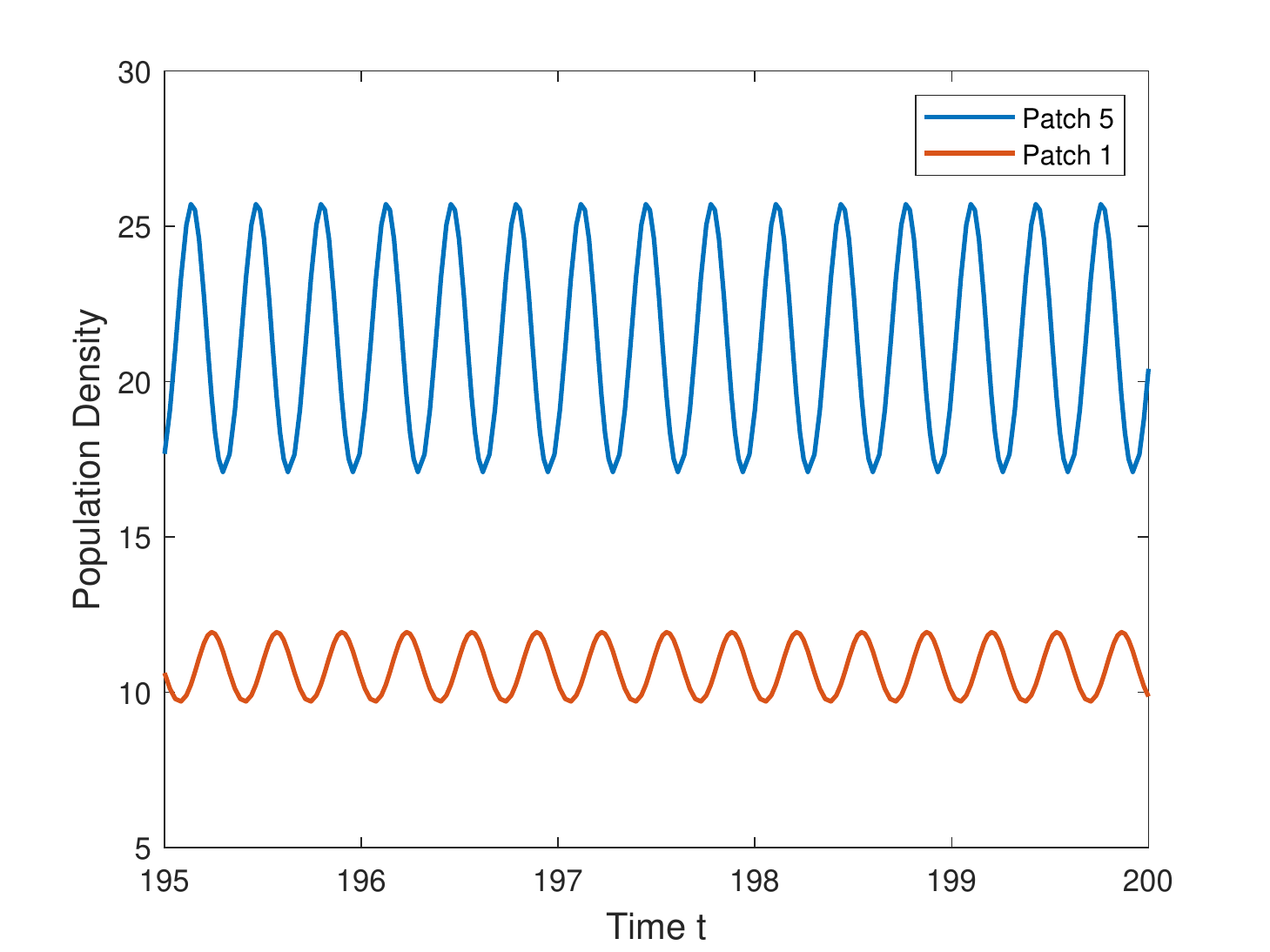}\includegraphics[width=0.5\textwidth]{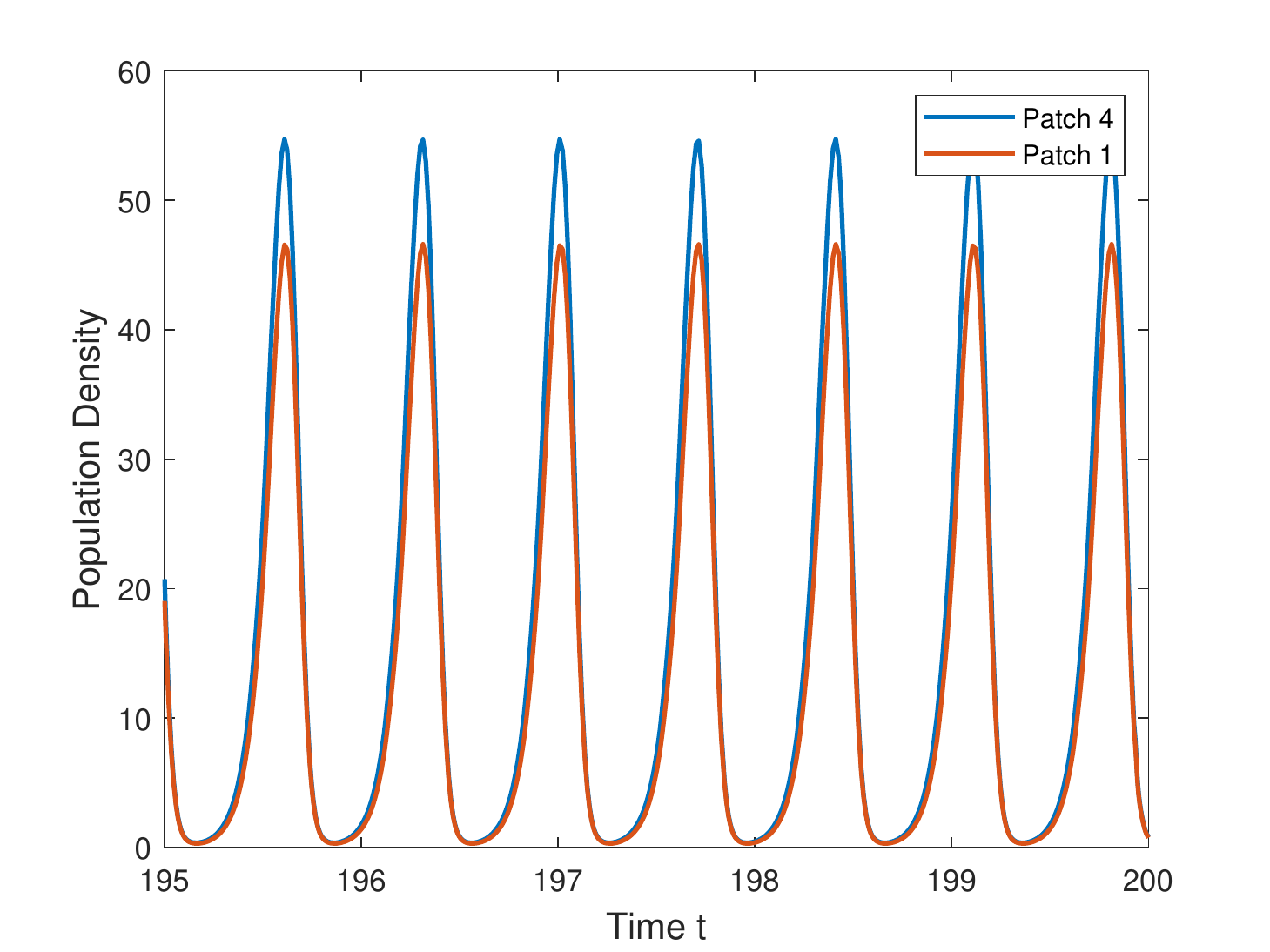}
\caption{The periodic solutions, and we only choose two patches for simplicity. (Left) the small dispersal case: $d=0.5$ and $r=0.087$. (Right) the large dispersal case: $d=10$ and $r=0.15$.
  \label{fig2}}
\end{figure}

\begin{figure}[htbp]
\centering\includegraphics[width=0.5\textwidth]{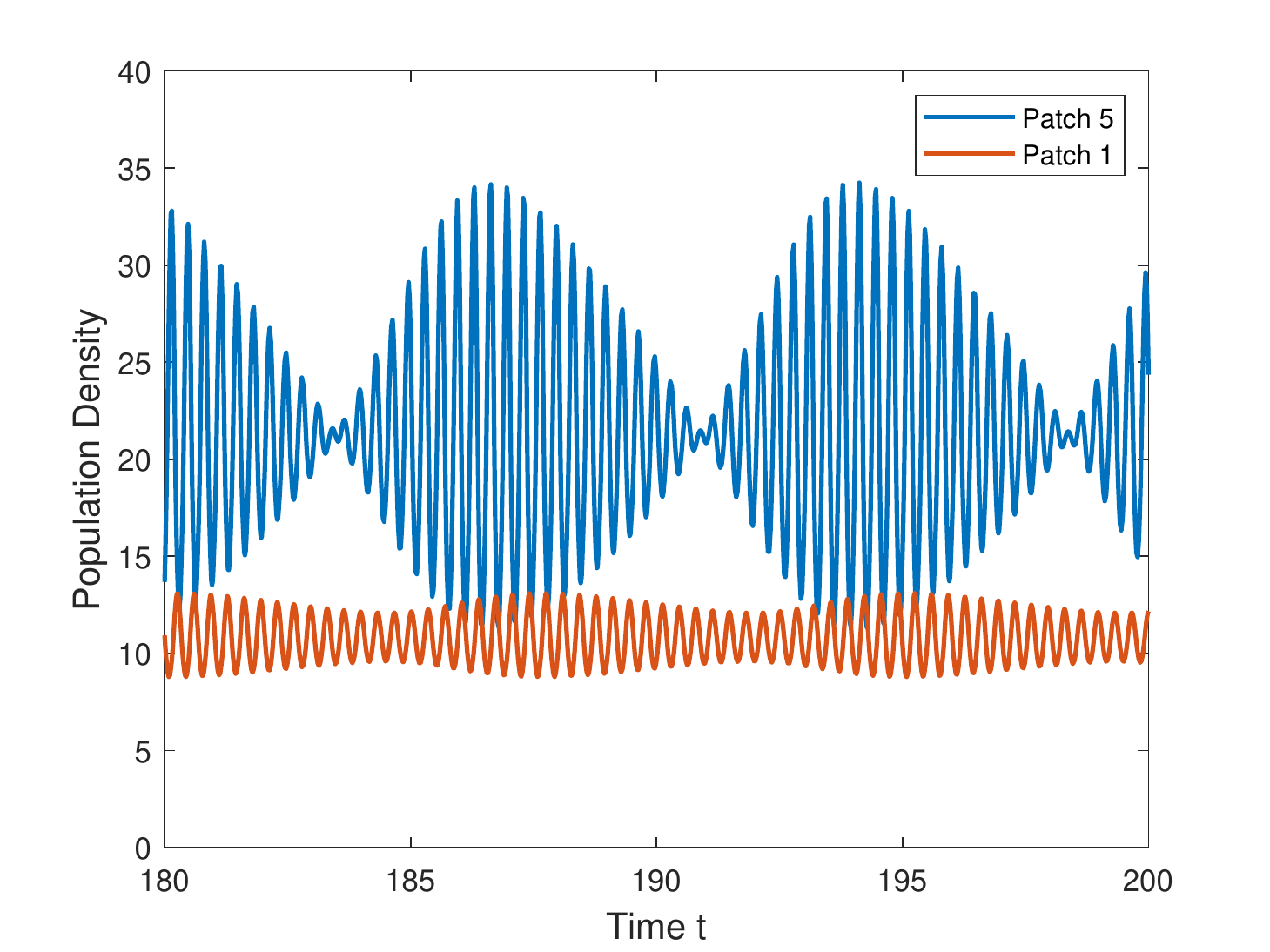}\includegraphics[width=0.5\textwidth]{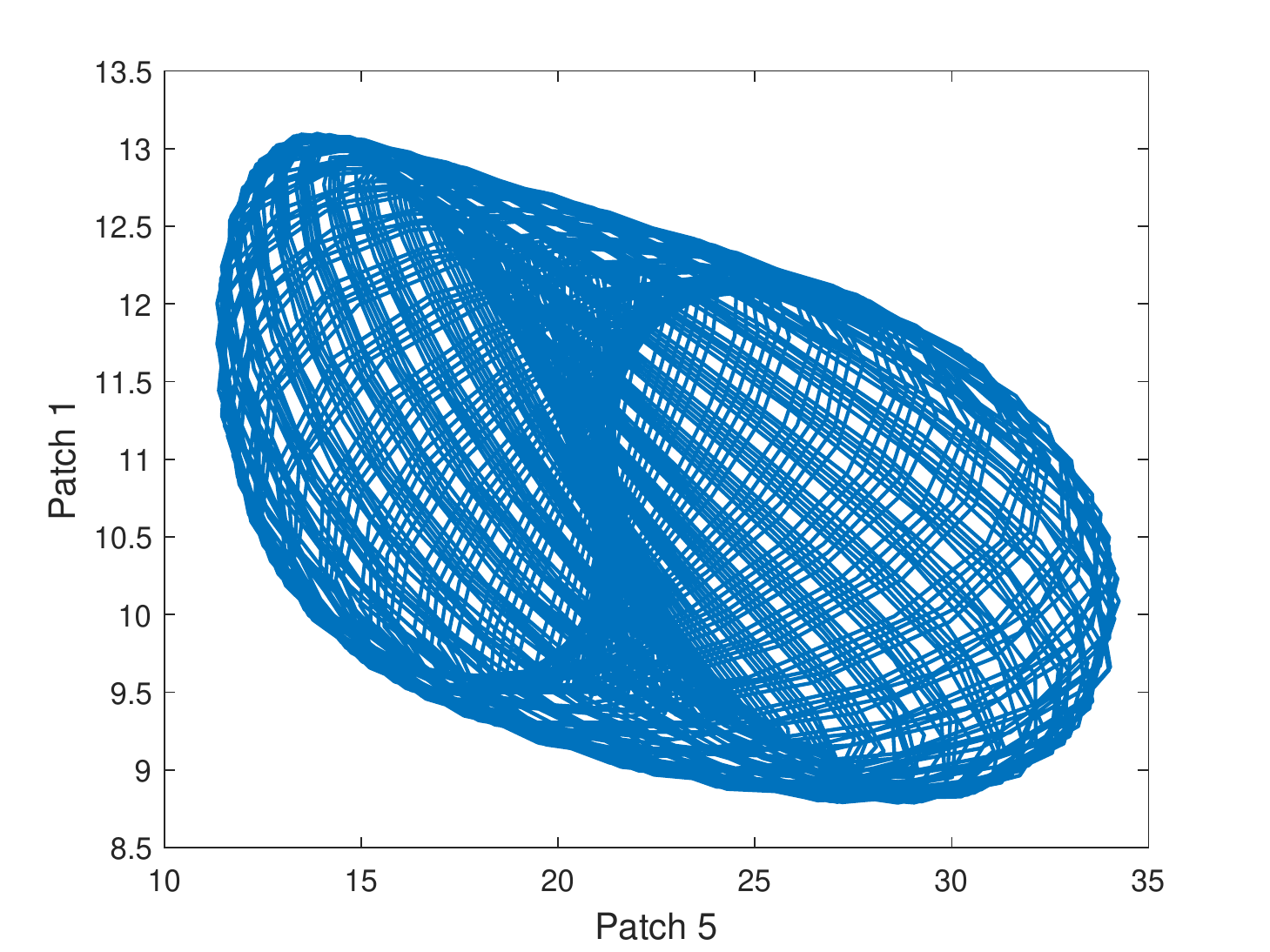}
\caption{The quasi-periodic solution. Here $d=0.5$ and $r=0.09$, and we only choose two patches for simplicity. The right one is the phase portrait for patches $1$ and $5$.
  \label{fig3}}
\end{figure}

Now we increase the patch numbers, and assume that there are $n=100$ patches arranged and connected as in
Figure \ref{fig33}.
Following \cite{GuoY2015,TianC2019}, we plot the solution of each path in one figure to show the spatiotemporal patterns. There exist
strip-typed inhomogeneous periodic solution and check-typed inhomogeneous periodic solution, see Figures \ref{fig4}-\ref{fig5}.
\begin{figure}[htbp]
\centering\includegraphics[width=0.8\textwidth]{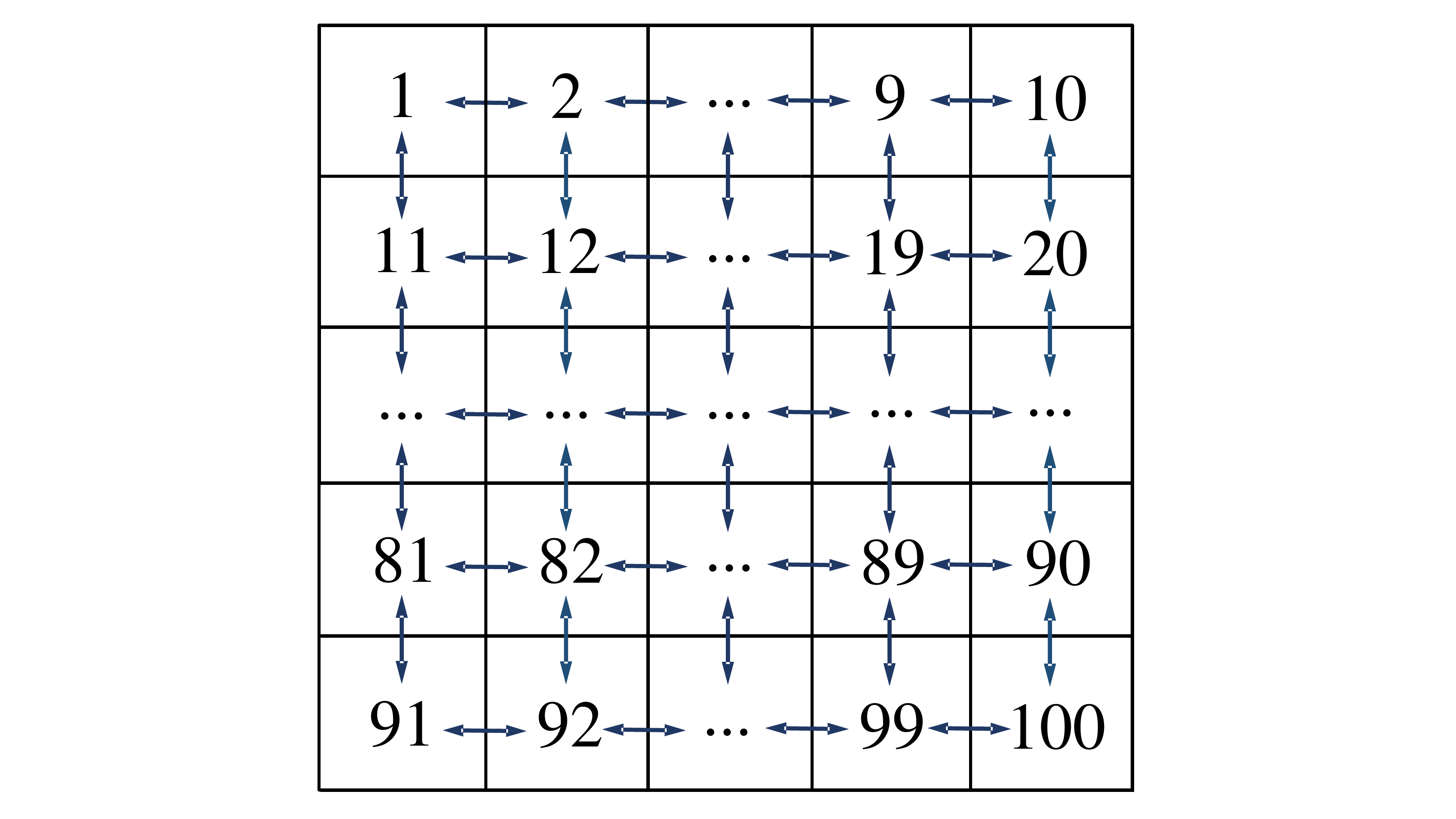}
\caption{The connectivity between one hundred patches.
  \label{fig33}}
\end{figure}

\begin{figure}[htbp]
\centering\includegraphics[width=0.5\textwidth]{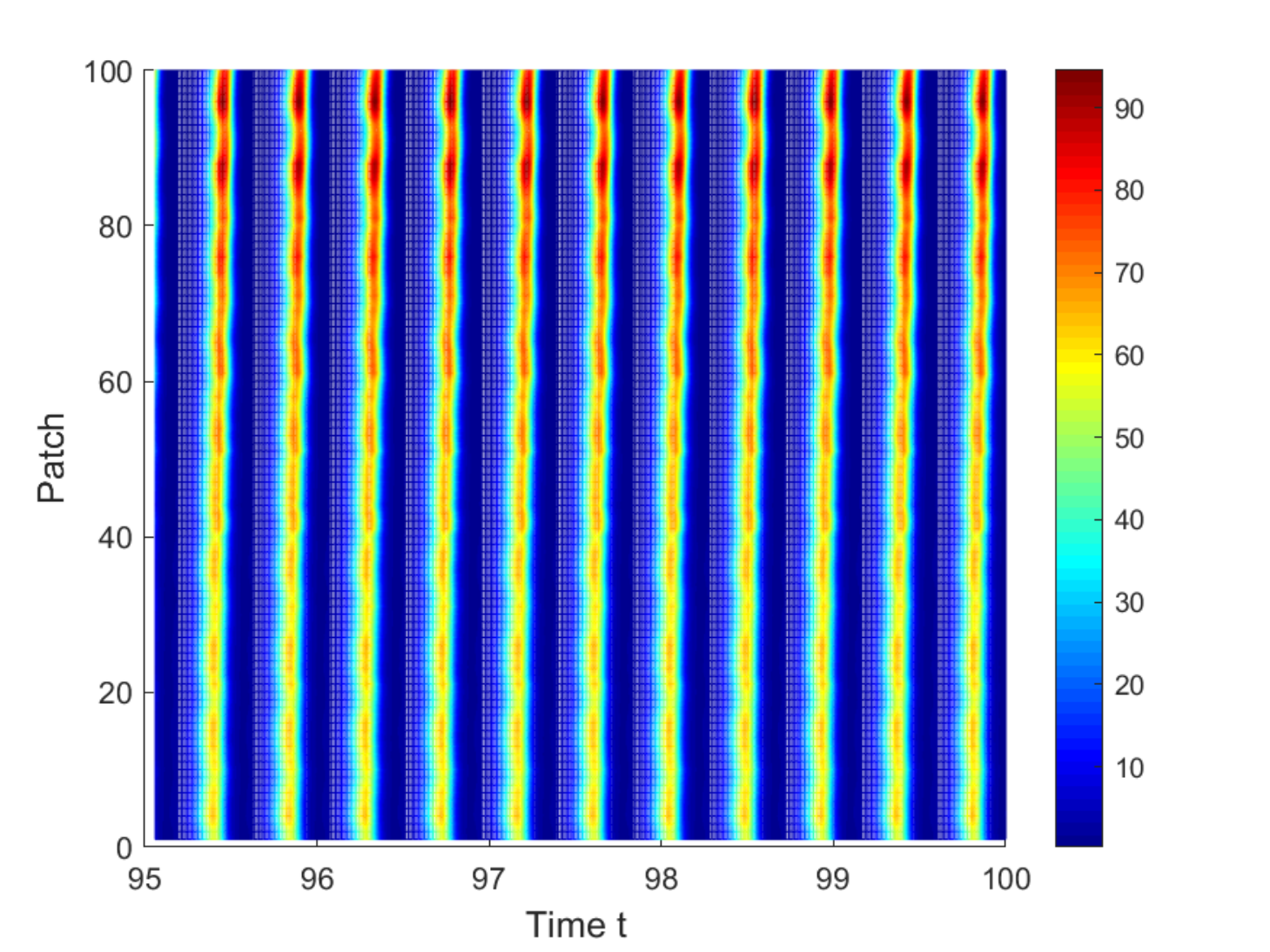}\includegraphics[width=0.5\textwidth]{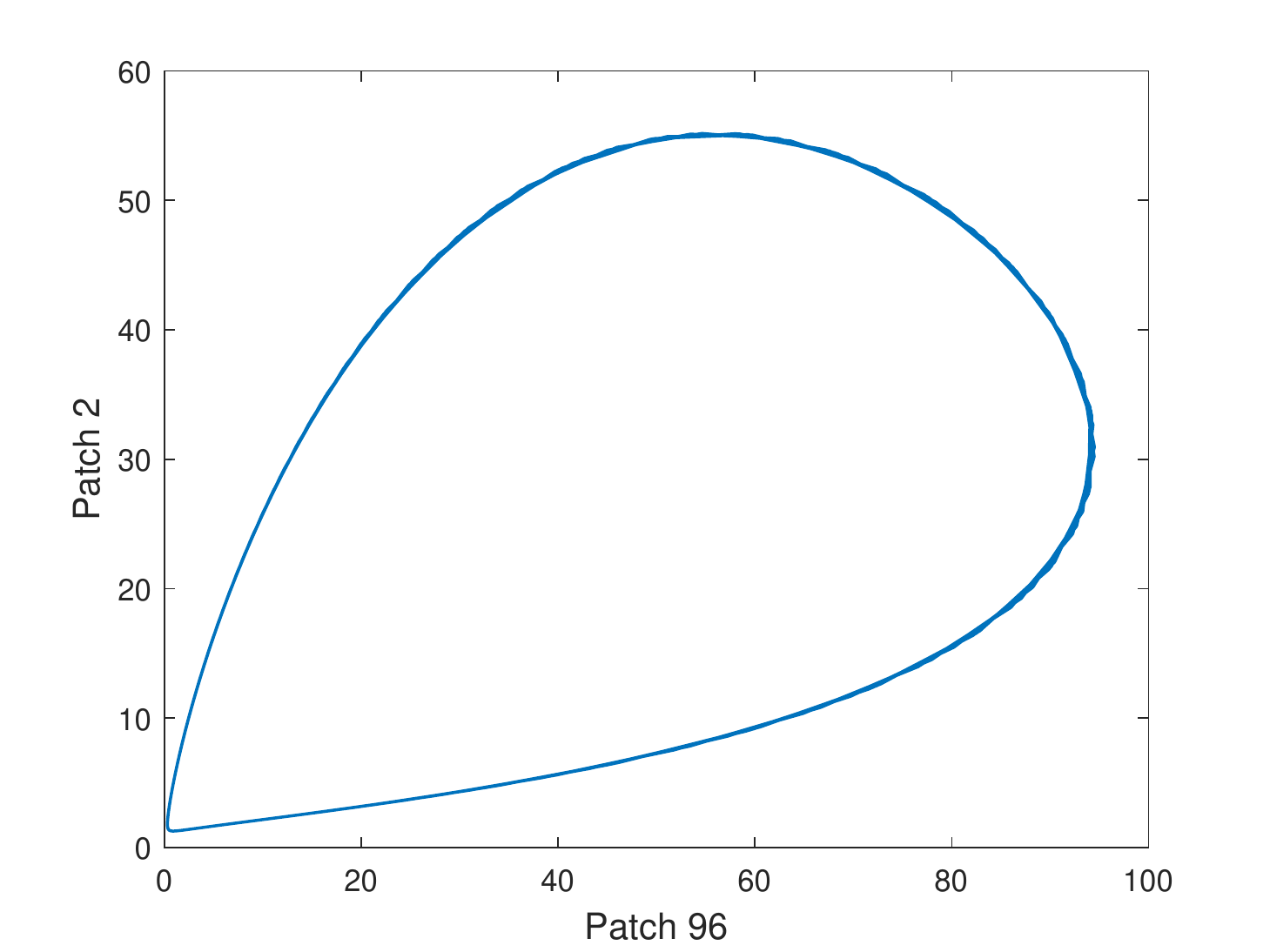}
\caption{Strip-typed inhomogeneous periodic solution. Here $d=10$ and the $y$-axis represents the patch number from $1$ to $100$. The right one is the phase portrait for patches $2$ and $96$.
  \label{fig4}}
\end{figure}

\begin{figure}[htbp]
\centering\includegraphics[width=0.5\textwidth]{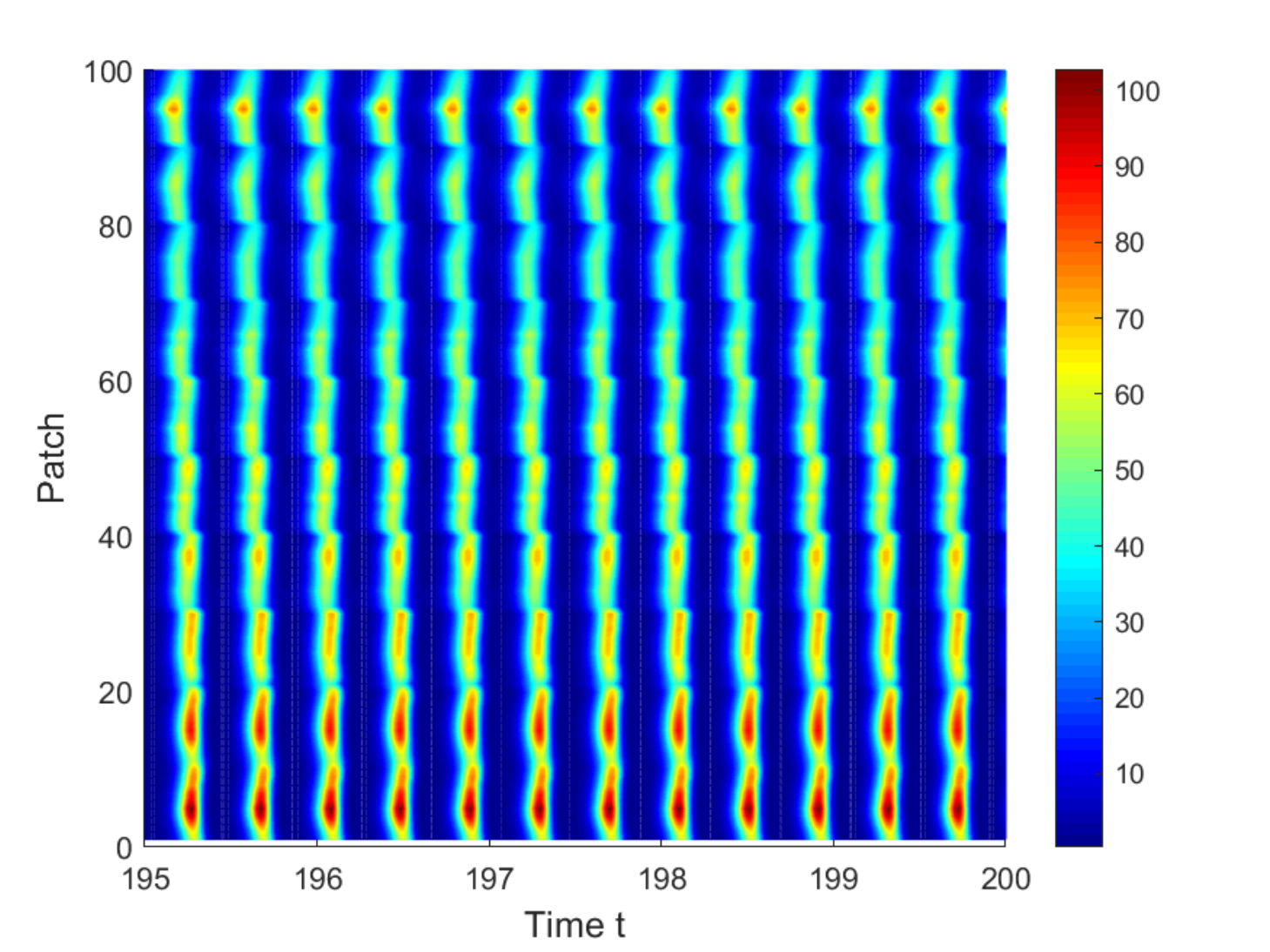}\includegraphics[width=0.5\textwidth]{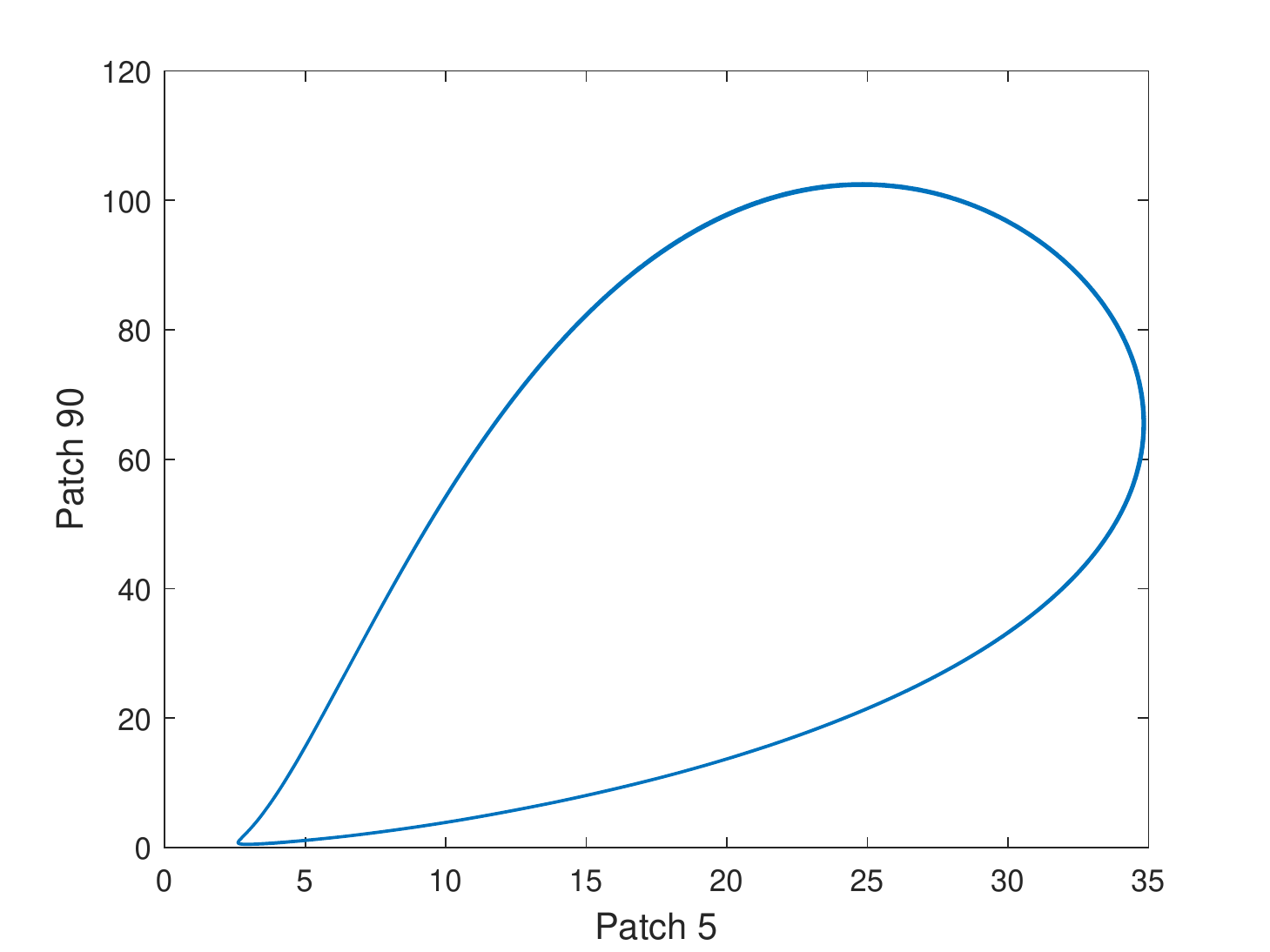}
\caption{Check-typed inhomogeneous periodic solution. Here $d=15$ and the $y$-axis represents the patch number from $1$ to $100$. The right one is the phase portrait for patches $2$ and $96$.
  \label{fig5}}
\end{figure}

\end{document}